\numberwithin{equation}{section}
\newtheorem{theo}{Theorem}[section]
\newtheorem{prop}{Proposition}[section]
\newcommand{\eps}{\varepsilon}
\newcommand{\R}{\mathbb{R}}
\begin{document}

\title[Laplace eigenvalues and minimal spheres into ellipsoids]{Laplace eigenvalues and non-planar minimal spheres into 3-dimensional ellipsoids}
\author{Romain Petrides}
\address{Romain Petrides, Universit\'e de Paris, Institut de Math\'ematiques de Jussieu - Paris Rive Gauche, b\^atiment Sophie Germain, 75205 PARIS Cedex 13, France}
\email{romain.petrides@imj-prg.fr}

\begin{abstract} 
We give a sufficient condition for branched minimal immersions of spheres into ellipsoids to be \textit{embedded}: we show that if the coordinate functions of the branched minimal immersion are first or second eigenfunctions with respect to a natural metric on the sphere involved in this problem, called \textit{critical metric}, then it is an embedding into a 2-dimensional ellipsoid or an immersion into a 3-dimensional ellipsoid. If in addition this critical metric is rotationally symmetric with respect to an axis and symmetric with respect to the orthogonal plane then the immersion is an embedding. We also give a construction of non-planar minimal spheres into 3-dimensional ellipsoids such that the coordinate functions are first and second eigenfunctions with respect to the critical metric obtained by maximization of linear combinations of first and second (area renormalized) Laplace eigenvalues among metrics on the sphere having the aforementionned symmetry constraints or not.
\end{abstract}

\maketitle

A problem raised by Yau in 1987 (\cite{Yau2}, section 4) asked for existence of non-planar embedded minimal spheres into 3-ellipsoids. By definition, a surface in $\mathbb{R}^4$ is planar if it lies into a hyperplane. This result is a priori surprising since for the analogous problem of geodesics, a classical result by Morse \cite{morse} stated that some 2-ellipsoids contain at most three embedded closed geodesics, the equatorial ones. However, in higher dimension, fruitful works initiated by Marques and Neves \cite{MarquesNeves} lead to the general resolution of the following famous Yau's conjecture by min-max methods (see Song \cite{song}): any manifold of dimension 3 admits infinitely many smoothly embedded minimal surfaces. This spectacular result shows that the problem of geodesics is not a good analogy to understand the embeddedness of minimal surfaces in higher dimension, but we do not get much information about the topology of these embedded minimal surfaces yet.

Therefore, we can ask to build many numbers of embedded minimal \textit{spheres} into ellipsoids. Of course, ellipsoids contain at least $4$ planar embedded spheres (equatorial spheres) but one might expect more ones, which are non-planar. This question, left open for years was recently solved by Haslhofer and Ketover \cite{hk}: sufficiently elongated ellipsoids contain an embedded non-planar minimal sphere. They combined Mean Curvature Flow and Min-Max methods to prove this result. Later, using Bifurcation Theory and the symmetries that arise if at least two semi-axes coincide, Bettiol and Piccione \cite{BP22} showed the existence of arbitrarily many distinct embedded non-planar minimal 2-spheres in sufficiently elongated ellipsoids of revolution.

In the current paper, we give a new method to build non-planar embedded minimal spheres and a new way to describe them. Indeed, we noticed in \cite{Pet20} (see also section \ref{minimmellispoid} in the current paper) that if $\Sigma$ is a minimal surface into some ellipsoid $\mathcal{E} \subset \mathbb{R}^{n}$ of parameters $p = \left(p_1,\cdots,p_n\right)$, with $p_i >0$, defined by
$$ \mathcal{E} = \{ (x_1,\cdots,x_n) \in \mathbb{R}^{n} ; p_1 x_1^2 + \cdots + p_n x_n^2 = 1 \} \hskip.1cm,$$
endowed with the induced metric of the Euclidean metric $\xi$, then the coordinate functions $(x_1,\cdots,x_n)$ of $\R^{n}$ are eigenfunctions with eigenvalues $(p_1,\cdots,p_n)$, with respect to the Laplacian of $(\Sigma,g)$ endowed with the metric 
\begin{equation} \label{defg} g = \frac{\left\vert H_\Sigma(x)\right\vert}{\left(p_1^2 x_1^2 + \cdots + p_n^2 x_n^2 \right)^{\frac{1}{2}}} \xi \end{equation}
where $H_\Sigma$ is the mean curvature vector of $\Sigma$ in $\mathbb{R}^n$. In fact, this simple remark gives a way to describe all the minimal immersions into any ellipsoids with the point of view of spectral geometry. This is a generalization of Takahashi's result \cite{takahashi} in the case of the sphere $p_1 = \cdots =p_n $, where coordinate functions are Laplace eigenfunctions with respect to the induced metric on $\Sigma$ of the Euclidean metric. 

One natural question is: given a minimal embedding into the ellipsoid $\mathcal{E}$, are the coordinate functions associated to the $n$ first eigenvalues counted with multiplicity of the Laplacian on $\Sigma$ ? In general, such a question is very hard to answer. If we just assume that $\mathcal{E}$ is a 3-sphere ($n=3$ and $p_1 = p_2 = p_3$) and that $\Sigma$ is an embedded surface, Yau conjectured that the coordinate functions are first eigenfunctions. If $\Sigma$ is topologically a sphere, or a torus, this conjecture holds true because there are uniqueness results: the equatorial spheres are the unique embedded minimal spheres by \cite{almgren}, and the Clifford torus is the unique embedded minimal torus \cite{brendle} \cite{hks}. The uniqueness result for the torus was known as the Lawson conjecture, left open for years. Notice that this conjecture by Yau is much more general since it implies the Lawson conjecture. Indeed, the unique immersed tori into some sphere by first eigenfunctions are the Clifford torus (in $\mathbb{S}^3$) and the flat equilateral torus (in $\mathbb{S}^5$) \cite{Ilias_ElSoufi}. In that spirit, it would be interesting to know the spectral indices of the parameters which appear in the previous constructions of non-planar embedded ellipsoids. Counting the nodal domains of coordinates of the arbitrarily many embedded non-planar minimal 2-spheres into ellipsoids obtained in \cite{BP22}  (of parameters ($p_1,p_2,p_3,p_3)$ and we assume in addition $p_2=p_3$), we remark that only one can be associated to the 4 first eigenvalues counted with multiplicity of the Laplacian associated to the metric defined in \eqref{defg}.

For a first study of this spectral point of view on minimal surfaces into ellipsoids, it is already interesting to assume $\Sigma = \mathbb{S}^2$ and to look for non-planar minimal spheres into rotationally symmetric ellipsoids having parameters which are first and second eigenvalues. Indeed, the aforementioned spectral characterization of minimal immersions into ellipsoids is more precise: the metric $g$ defined on $\Sigma$ by \eqref{defg} can be viewed as a critical metric on functionals defined as combinations of some eigenvalues, on the set of metrics of $\Sigma$. This characterization was observed in \cite{Pet20} (see also \cite{PT23}). It was a generalization of a result by Nadirashvili \cite{nadirashvili} for one eigenvalue.

Therefore, we perform a variational method on some combinations of first and second eigenvalues to build the expected minimal spheres. Variational methods if only one eigenvalue appears were intensively studied in the past decade after the seminal works by Nadirashvili \cite{nadirashvili} for the first eigenvalue on tori and Fraser and Schoen \cite{fs} for the first Steklov eigenvalue on zero genus surfaces with boundary (see also \cite{petrides} \cite{petrides-2} \cite{Pet19} \cite{Pet22}). The following result is a first complete application of a variational method for combination of different eigenvalues, started in \cite{Pet20}:
\begin{theo} \label{theomain} There is a one parameter family $(p_t,q_t)_{t > t_{\star}}$ for some $0 < t_{\star}<1$ such that there is an embedded non planar minimal topological sphere $S_t$ into the rotationally symmetric ellipsoid 
$$\mathcal{E}_t := \{ x\in \mathbb{R}^4 ; p_t x_0^2 + q_t x_1^2 + q_t x_2^2 + q_t x_3^2 = 1 \}$$ 
and such that the coordinates $x_0 , x_2 , x_2 , x_3$ are first and second Laplace eigenfunctions for the natural induced metric 
$$ g_t = \frac{H_{S_t}(x)}{\left(p_t^2 x_0^2 + q_t^2 x_1^2 + q_t^2 x_2^2 + q_t^2 x_3^2 \right)^{\frac{1}{2}}} \xi$$
on the sphere $S_t$, where $\xi$ is the standard Euclidean metric and $H_{S_t}$ is the mean curvature of $S_t$. For any $t_1 < t_2$, $S_{t_1}$ is not isometric to $S_{t_1}$. Moreover $t \mapsto p_t$ is decreasing and $t\mapsto q_t$ is increasing, and we have that $p_t \to 0$ and $q_t \to 16 \pi$ as $t\to +\infty$ and that $S_t$ converges as varifolds to the round sphere $\{0\} \times \mathbb{S}^2$ with multiplicity $2$ as $t\to +\infty$.
\end{theo}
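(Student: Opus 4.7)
\emph{Plan of proof.} The strategy is to realize the pair $(p_t, q_t)$ as the spectral data of a maximizer of a one-parameter family of functionals combining the first and second area-renormalized Laplace eigenvalues on the sphere, under symmetry constraints. For $t > t_\star$ I would work with
$$F_t(g) \;=\; \alpha(t)\,\bar\lambda_1(g) \;+\; \beta(t)\,\bar\lambda_2(g),$$
where $\bar\lambda_k(g) = \lambda_k(g)\,\mathrm{Area}(\mathbb{S}^2,g)$ is the scale-invariant eigenvalue and $(\alpha(t),\beta(t))$ is a smooth curve of positive weights chosen so that $\beta/\alpha$ is increasing in $t$ and tends to $+\infty$ as $t\to+\infty$. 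The supremum is taken over the class $\mathcal{M}_{\mathrm{sym}}$ of smooth metrics on $\mathbb{S}^2$ invariant under an $O(2)$-rotation around a fixed axis and under the reflection through the orthogonal equatorial plane: this is precisely the symmetry class preserved by the pullback formula \eqref{defg} for rotationally symmetric ellipsoids with parameter pattern $(p,q,q,q)$, and it matches the hypotheses of the embedding criterion already established in the paper.

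The first step, which I expect to be the main technical obstacle, is to prove that $\sup_{\mathcal{M}_{\mathrm{sym}}} F_t$ is attained for $t > t_\star$. Maximizing sequences of eigenvalue functionals can fail to converge either by concentrating into bubbles or by eigenvalue collision; in the single-eigenvalue setting this is handled by the methods of \cite{nadirashvili, petrides, fs, Pet19, Pet22}. Here, following the coupled-eigenvalue framework of \cite{Pet20, PT23}, I would establish a strict gap inequality of the form
$$\sup_{\mathcal{M}_{\mathrm{sym}}} F_t \;>\; \Lambda^{\mathrm{bubble}}_t,$$
where $\Lambda^{\mathrm{bubble}}_t$ is the supremum of $F_t$ along degenerate bubble-tree configurations. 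The lower bound on $\sup F_t$ comes from an explicit symmetric test metric (for instance a family of spindle-like metrics approaching a double cover of the round sphere), while the upper bound on bubble-tree configurations uses the Hersch-type bounds $\bar\lambda_1 \le 8\pi$ and $\bar\lambda_2 \le 16\pi$ on spheres together with additivity of eigenvalue mass across bubbles. The threshold $t_\star$ is precisely the value above which the test metric beats the bubble bound. The symmetry restriction is essential here because it prevents bubbles from concentrating at generic points and rigidly controls the possible limiting eigenvalue multiplicities.

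Once a maximizer $g_t \in \mathcal{M}_{\mathrm{sym}}$ is produced, the first variation formulas for eigenvalues, as in \cite{Pet20, nadirashvili, PT23}, provide a first eigenfunction $\varphi_0$ and second eigenfunctions $\varphi_1,\varphi_2,\varphi_3$ on $(\mathbb{S}^2, g_t)$ such that $\Phi_t := (c_0\varphi_0, c_1\varphi_1, c_1\varphi_2, c_1\varphi_3)$, for suitable normalizing constants $c_0, c_1 > 0$, is a branched conformal minimal immersion into the ellipsoid $\mathcal{E}_t$ with $p_t = \lambda_1(g_t)$ and $q_t = \lambda_2(g_t)$, whose induced metric is exactly the metric given by \eqref{defg}. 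The $O(2)$-equivariance of $g_t$ forces $\varphi_0$ to be the axisymmetric mode (so $\lambda_1$ is simple) and $\varphi_1,\varphi_2,\varphi_3$ to span the three-dimensional rotation representation (so $\lambda_2$ has multiplicity exactly $3$); the right balance of weights $(\alpha(t),\beta(t))$ is what produces these multiplicities. The embedding criterion from the abstract, whose symmetry hypotheses hold by construction, then upgrades $\Phi_t$ from a branched immersion to a smooth embedded sphere $S_t$. Non-planarity is automatic from the multiplicity count: the four components of $\Phi_t$ are $L^2$-orthogonal eigenfunctions, hence linearly independent, so $\Phi_t(\mathbb{S}^2)$ does not lie in any hyperplane of $\mathbb{R}^4$.

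Finally, the one-parameter structure follows from envelope-type arguments. The monotonicities $t\mapsto p_t$ decreasing and $t\mapsto q_t$ increasing come from differentiating the envelope $t\mapsto \sup F_t$ through the Hadamard perturbation formula applied to the simple (resp.~triply degenerate) eigenvalues, combined with the fact that raising $\beta(t)/\alpha(t)$ pushes the maximizer toward higher $\bar\lambda_2$ and lower $\bar\lambda_1$. Strict monotonicity in turn rules out isometries between $S_{t_1}$ and $S_{t_2}$ for $t_1 \neq t_2$, since $(p_t,q_t)$ reconstructs the ambient ellipsoid and hence the intrinsic geometry of $S_t$ uniquely. For the asymptotics, $\alpha(t)/\beta(t)\to 0$ drives the problem toward the pure second-eigenvalue maximization on $\mathbb{S}^2$, whose supremum $16\pi$ is attained in the limit by degree-$2$ branched covers of the round sphere; standard bubble convergence arguments then give varifold convergence of $S_t$ to $\{0\}\times\mathbb{S}^2$ with multiplicity $2$, together with $q_t\to 16\pi$ and $p_t\to 0$ as claimed.
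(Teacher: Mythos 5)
Your high-level strategy is the same as the paper's: minimize $g\mapsto 1/(\bar\lambda_1(g)+t\bar\lambda_2(g))$ (equivalently maximize the linear combination $\bar\lambda_1+t\bar\lambda_2$) over rotationally-and-reflection-symmetric conformal factors, invoke a gap condition for existence of a smooth minimizer, read off the minimal immersion into an ellipsoid from the first-variation structure theorem, and upgrade to an embedding via Theorem \ref{theoembedded}. This matches Sections \ref{sectionpresentation}--\ref{sectioncriticalmetrics}.

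However there are two concrete gaps in the argument as you present it.

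\emph{First}, the claim that $O(2)$-equivariance of $g_t$ ``forces $\varphi_0$ to be the axisymmetric mode (so $\lambda_1$ is simple) and $\varphi_1,\varphi_2,\varphi_3$ to span the three-dimensional rotation representation (so $\lambda_2$ has multiplicity exactly $3$)'' is not true in general. A rotationally symmetric metric can perfectly well have $\lambda_2$ of multiplicity $1$ or $2$ (e.g.\ an axisymmetric second mode, or a $\cos\theta,\sin\theta$ pair), and in that case the critical metric corresponds to a $2$-dimensional target ellipsoid $\mathcal{E}_2$ and gives a planar (equatorial) sphere in $\mathbb{R}^4$. Ruling out this degenerate case is in fact the bulk of the work in the paper: Theorem \ref{theocriticalembedded} only tells you the target is $\mathcal{E}_2$ or $\mathcal{E}_3$, and Proposition \ref{propE3} then classifies all critical metrics with $2$-dimensional target in terms of the function $\tau$ of \eqref{eqdeftau}, the area $A_{p,q}$, and the criticality condition \eqref{eqcondderivative}. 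The comparison $f_t(A_{p_t,q_t}p_t,A_{p_t,q_t}q_t)\geq f_t(8\pi,8\pi)$ from Section \ref{sectionpresentation} is what excludes a non-round $2$-d critical ellipsoid from being a minimizer, and the strict inequality $\Lambda_{f_t}>16\pi t$ combined with the round case being worse than the $3$-d case for $t\geq 1$ gives the threshold $t_\star$. None of this follows merely from ``the right balance of weights.''

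\emph{Second}, the gap condition \eqref{mainassumption} is delicate and you underestimate it. The test family of metrics degenerating to two round spheres has $\bar\lambda_2(g_\eps)\to 16\pi$ extremely fast (quadratically, $16\pi - 48\pi\eps^2 + o(\eps^2)$ by Theorem \ref{theoasymptotic}), while $\bar\lambda_1(g_\eps)\to 0$ only logarithmically ($4\pi/\ln(1/\eps) + O(1/(\ln(1/\eps))^2)$). The whole point of Theorem \ref{theoasymptotic} is that the logarithmic decay of $\bar\lambda_1$ dominates the quadratic loss in $\bar\lambda_2$, so that $\bar\lambda_1(g_\eps)+t\bar\lambda_2(g_\eps)>16\pi t$ for $\eps$ small, \emph{for every} $t>0$. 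A rough Hersch-type bound $\bar\lambda_2\le 16\pi$ together with a generic test metric does not yield this; you need the precise two-term expansion to see that the inequality goes the right way. Your proposal gestures at ``spindle-like'' test metrics but gives no reason to believe the gap survives.

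Finally, the monotonicity of $t\mapsto p_t,q_t$ and the non-isometry of $S_{t_1},S_{t_2}$ are stated in the theorem and you attribute them to Hadamard/envelope considerations; this is plausible in direction but is an assertion, not a proof, and you do not address possible non-uniqueness of maximizers, which would break the envelope argument.
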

We refer to section \ref{sectionpresentation} for a detailed presentation of the construction of this family. We would like to emphasize that the surfaces built by this method in Theorem \ref{theomain} are embedded. Indeed, variational methods on eigenvalues only provide us immersed and possibly branched minimal surfaces, but here, we have the special case of spheres immersed by first and second eigenvalues:

\begin{theo} \label{theoembedded} If $\Phi : \mathbb{S}^2 \mapsto \mathcal{E}$ is a minimal branched conformal immersion into some ellipsoid of parameter $p = \left(p_1,\cdots,p_n \right)$ such that for any $i\in \{1,\cdots, n\}$, $p_i$ is a first or second eigenvalue of $(\mathbb{S}^2,g)$ (and we have $ \{p,q\} = \{p_1,\cdots,p_n\} $ with $p\leq q$), where 
$$ g = \frac{\left\vert H_S\circ \Phi  \right\vert}{\left(p_1^2 \Phi_1^2 + \cdots + p_n^2 \Phi_n^2 \right)^{\frac{1}{2}}} \Phi_{\star}\xi. = e^{2v} dA_{\mathbb{S}^2} $$
Then up to rearrangements of coordinates, $S := \Phi\left( \mathbb{S}^2 \right) \subset \R^3 \times \{0\}$ and $\Phi$ is embedded or $S \subset \R^4 \times \{0\}$ and $\Phi$ does not have any branched point.
If in addition $e^{2v}$ is rotationally symmetric with respect to the axis $x_3$ and symmetric with respect to $\{x_3 = 0 \}$, then $\Phi$ is an embedding.
\end{theo}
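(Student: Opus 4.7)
The plan is to first reduce to a controlled number of coordinates via the spectral splitting, then prove a key dimension bound, and finally handle the two resulting cases. Write $V = \mathrm{span}\{\Phi_i : p_i = p\} \subset E_p(g)$ and $W = \mathrm{span}\{\Phi_j : p_j = q\} \subset E_q(g)$, with $k := \dim V$ and $m := \dim W$. An orthogonal change of coordinates within each group preserves the ellipsoid equation $\sum p_i \Phi_i^2 = 1$, so I may assume the $\Phi_i$ are $L^2(g)$-orthonormal within each group; after placing any identically-zero coordinate last and relabelling, the image lies in $\mathbb{R}^{k+m}$. Cheng's multiplicity theorem on $S^2$ gives $k \leq 3$ and $m \leq 5$ at once.

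The hard part is to sharpen this to $k + m \leq 4$. My plan is to combine, in any conformal chart $z$ on $(S^2, g)$, the conformality identity $\sum_i (\partial_z \Phi_i)^2 = 0$, the eigenvalue equations $\Delta_g \Phi_i = p_i \Phi_i$, and the ellipsoid constraint. The rigidity input is that any nontrivial element of $V$ lies in $E_p(g)$ and so has exactly two nodal domains on $S^2$ by Courant's theorem, while any nontrivial element of $W$ has at most three. If $k + m \geq 5$, the system should admit too many independent low-index eigenfunctions to remain compatible with both the isotropic relation and the quadratic ellipsoid constraint; the contradiction is to come from a nodal-domain count in the spirit of Hersch-Cheng, together with a Takahashi-type rigidity whenever the map degenerates into a minimal $S^2$-into-sphere by eigenfunctions of pure type.

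With $k + m \leq 4$ established, the two remaining cases are $k+m = 3$ and $k+m = 4$. In the first case, $\Phi : S^2 \to \mathcal{E}$ is a branched conformal map between topological two-spheres of some degree $d \geq 1$, and Riemann--Hurwitz reads $\sum_z (e_z - 1) = 2d - 2$; a spectral comparison between $(S^2, g)$ and the target two-ellipsoid (a $d$-fold branched cover has strictly smaller first eigenvalue when $d \geq 2$) forces $d = 1$, hence no branching and an embedding. In the second case, $\Phi : S^2 \to \mathcal{E}$ is a branched minimal immersion into a three-ellipsoid; at a would-be branch point $z_0$ every $\partial_z \Phi_i$ vanishes, and combining the isotropy relation with the first/second-eigenfunction nodal rigidity yields a contradiction, ruling out branch points. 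Under the additional $O(2)$ symmetry hypothesis, the rotation about the $x_3$-axis together with the reflection across $\{x_3 = 0\}$ preserve $(S^2, g)$ and hence the eigenspaces $E_p$ and $E_q$; decomposing into $SO(2)$-irreducibles puts the coordinates of $\Phi$ in the canonical form of pairs $(\rho_\alpha(x_3)\cos\theta, \rho_\alpha(x_3)\sin\theta)$ together with $SO(2)$-invariant functions of $x_3$, reducing the embedding question to a one-dimensional analysis of the resulting profile curve.
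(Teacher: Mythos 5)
Your reduction to two spectral blocks $V \subset E_p(g)$ and $W \subset E_q(g)$ and the use of $L^2$-orthonormalisation within each block is the right first move and matches the paper. However, the rest of the outline has genuine gaps at exactly the points where the proof is hard.

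\textbf{The dimension bound.} You invoke Cheng's $2k+1$ bound to get $k \leq 3$, $m \leq 5$, and then propose to close the gap to $k+m \leq 4$ by a ``nodal-domain count in the spirit of Hersch--Cheng, together with a Takahashi-type rigidity.'' This is not a proof; it is precisely the statement you would need to establish. The paper instead uses the sharp multiplicity bound of Hoffmann-Ostenhof, Hoffmann-Ostenhof and Nadirashvili \cite{HON}: on $\mathbb{S}^2$, both the first \emph{and} the second eigenvalue have multiplicity at most $3$. That is a nontrivial improvement over Cheng and it is the exact input that closes the count. Moreover, your accounting is off in the case $p < q$: then $\lambda_1 < \lambda_2$, so $\lambda_1$ is automatically simple and $k = 1$ (there is no need for Cheng there), and the only nontrivial bound needed is $m = \mathrm{mult}(\lambda_2) \leq 3$, which is exactly what \cite{HON} gives and what you have not supplied.

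\textbf{The $n = 3$ case.} Your argument to force degree $1$ is a ``spectral comparison'' claim that a $d$-fold branched cover has strictly smaller first eigenvalue when $d \geq 2$. This does not apply here: the critical metric $g$ on $\mathbb{S}^2$ is \emph{not} the pullback $\Phi^\star(\xi|_{\mathcal{E}})$ (the paper explicitly notes this when $\Lambda \neq \lambda I_m$), so there is no domination relation between the spectrum upstairs and the spectrum of the target. The paper's mechanism is topological, not spectral: one of the coordinate functions has nodal set a single embedded circle (automatic for a first eigenfunction; for a second eigenfunction one rotates the rotationally symmetric $\mathcal{E}_2$ until this holds, using the classification of second-eigenfunction nodal sets); the preimage of the corresponding equator of $\mathcal{E}_2$ under the branched cover $\Phi$ is therefore a single circle, which forces the degree to be $1$.

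\textbf{The $n = 4$ case and the symmetric case.} Saying that a branch point forces all $\partial_z\Phi_i = 0$ and then that ``nodal rigidity yields a contradiction'' is again the missing content, not a proof of it. The paper's argument is that $\nabla\eta(z_0) = 0$ would make the rank of $\{\eta(z_0), \eta_x(z_0), \eta_y(z_0)\}$ at most $1$, so the great circle $\{\eta(z_0),\eta_x(z_0),\eta_y(z_0)\}^\perp \cap \mathbb{S}^2$ lies in the set $B$ of directions whose eigenfunction $\langle \eta, v\rangle$ has an $\infty$-shaped nodal set; this gives two antipodal directions in a single connected component of $B$, contradicting the continuity argument of Section \ref{subsecnodalset}. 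That argument is the actual key step, and it is not present in your outline. Likewise, the embeddedness under the symmetry hypothesis is the bulk of the paper's Section \ref{sectionembeddedsphere}: it needs the normal-component elliptic equation \eqref{eqellipticn0}, the Gauss--Codazzi computation, and Little's theorem on globally convex spherical curves to show that $\eta/|\eta|$ is injective on the nodal set and on the nodal domains of $\phi_0$. The claim that symmetry ``reduces the embedding question to a one-dimensional analysis of the profile curve'' is true in spirit but is where the work actually happens; the final argument still requires a sign-alternation/nodal-count argument for the profile to rule out self-intersections, which you have not supplied.
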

Theorem \ref{theoembedded} is similar to the result of embeddedness of (possibly branched) minimal free boundary immersions into balls \textit{by first Steklov eigenfunctions} for surfaces with boundary of \textit{genus $0$} \cite{fs} but the proof of Theorem \ref{theoembedded} needs more refinement (see section \ref{sectionembeddedsphere}). We do not know if it is possible to remove the symmetry assumptions on $e^{2v}$ and get the same conclusion, but we also give a general picture and slightly weaker natural assumptions on the road to prove the embeddedness. In particular, as a first step, we use that the coordinates are independent first or second eigenfunctions, and that their multiplicity is bounded by $3$ (see \cite{HON}) to deduce the maximal dimension of the target manifold. Notice also that Theorem \ref{theomain} gives examples of metrics on the sphere such that the multiplicity $3$ for the second eigenvalue is realized. This question of optimality of $3$ was left open in \cite{HON}.

\medskip

The main idea to prove Theorem \ref{theomain} is to look for Riemannian metrics $g_t$ that maximize linear combinations of first and second eigenvalues $g\mapsto \bar{\lambda}_1(g) + t \bar{\lambda}_2(g)$ among all the metrics on $\mathbb{S}^2$. Here we define for any metric $g$, 
$$\bar{\lambda}_k(g) = \lambda_k(g) A_g(\mathbb{S}^2),$$
the scale-invariant $k$-th eigenvalue where $A_g(\mathbb{S}^2)$ is the area of the sphere with respect to $g$ and
$$ \lambda_0(g) = 0 < \lambda_1(g) \leq \lambda_2(g) \leq \cdots \leq \lambda_k(g) \to +\infty \text{ as } k\to +\infty $$
are the eigenvalues of the Laplacian $\Delta_g$ with respect to $g$ on $\mathbb{S}^2$ counted with multiplicity. The parameter $t > 0$ is exactly the one that appears in Theorem \ref{theomain}. 

\medskip

Let's recall some results about maximization/minimization problems of Laplace eigenvalues on a surface $\Sigma$. It is well known from works by Yang-Yau \cite{yangyau} and Li-Yau \cite{liyau} (first eigenvalues) and Korevaar \cite{korevaar} (higher eigenvalues) that $g \mapsto \overline{\lambda}_k(g) $ is bounded by a constant depending only on $k$ and the topology of $\Sigma$. We know that $\inf_g \overline{\lambda}_k(g) = 0$ and that it is never realized on connected surfaces. Indeed, minimizing sequences are the classical Cheeger dumbells. The simplest way to hope for critical metrics is then a maximization. Therefore, we ask the following very natural questions: can we compute the optimal constant 
$$\Lambda_k(\Sigma) = \sup_g \overline{\lambda}_k(g)$$ 
with respect to the topology and $k$ ? Is there a metric which realizes this supremum ?

Notice that Yang-Yau's and Li-Yau's inequalites were a refinement of a celebrated result by Hersch on the sphere to any topology. Since our paper is devoted to the spherical case for the bottom of the spectrum, let's highlight the Hersch inequality:
\begin{theo}[Hersch \cite{hersch}] Let $g$ be a smooth metric on $\mathbb{S}^2$, then
\begin{equation} \label{eqHersch} \frac{1}{\bar{\lambda}_1(g)} + \frac{1}{\bar{\lambda}_2(g)} + \frac{1}{\bar{\lambda}_3(g)} \geq \frac{3}{8\pi} \end{equation}
with equality if and only if $g$ is a round metric.
\end{theo}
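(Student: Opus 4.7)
The plan is to adapt the classical Hersch test-function argument on $\mathbb{S}^2$ and extract \eqref{eqHersch} from a trace identity involving the full triple of coordinate test functions, rather than from a single Rayleigh quotient. First, by uniformization I write $g = e^{2u} g_0$ with $g_0$ the round metric of area $4\pi$, and invoke a Brouwer-degree fixed-point argument on the M\"obius group $\mathrm{Conf}(\mathbb{S}^2)$ to produce a conformal automorphism $\phi$ such that the test functions $\tilde x_i := x_i \circ \phi$ (where $x_1, x_2, x_3$ are the standard coordinates on $\mathbb{S}^2 \subset \mathbb{R}^3$) have vanishing $g$-average, $\int_{\mathbb{S}^2} \tilde x_i \, dA_g = 0$ for $i = 1, 2, 3$; by construction they also satisfy the pointwise identity $\sum_i \tilde x_i^{\,2} \equiv 1$.

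The crucial algebraic observation I will use is that the Dirichlet Gram matrix $A_{ij} := \int_{\mathbb{S}^2} \nabla_g \tilde x_i \cdot \nabla_g \tilde x_j \, dA_g$ equals $\tfrac{8\pi}{3}\,\delta_{ij}$ no matter what $g$ or $\phi$ are. Indeed, two-dimensional conformal invariance of the Dirichlet form first eliminates the factor $e^{2u}$ (replacing $g$ by $g_0$), and a second application, now to the M\"obius map $\phi$ itself, reduces the integral to the round-sphere identity
\begin{equation*}
\int_{\mathbb{S}^2} \nabla_{g_0} x_i \cdot \nabla_{g_0} x_j \, dA_{g_0} \;=\; 2 \int_{\mathbb{S}^2} x_i x_j \, dA_{g_0} \;=\; \frac{8\pi}{3}\,\delta_{ij}.
\end{equation*}
Meanwhile the mass matrix $B_{ij} := \int \tilde x_i \tilde x_j \, dA_g$ is positive definite with trace $\mathrm{tr}\, B = \int_{\mathbb{S}^2} \big(\sum_i \tilde x_i^{\,2}\big)\, dA_g = A_g(\mathbb{S}^2)$.

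Now I apply the Courant--Fischer characterization on the 3-dimensional subspace $V := \mathrm{span}(\tilde x_1, \tilde x_2, \tilde x_3)$ of $L^2(dA_g)$-mean-zero functions. Let $\mu_1 \leq \mu_2 \leq \mu_3$ denote the generalized eigenvalues of the pencil $(A, B)\big|_V$; since every $k$-dimensional subspace of $V$ is in particular a $k$-dimensional subspace of mean-zero $H^1$ functions, $\lambda_k(g) \leq \mu_k$ for $k = 1, 2, 3$. Summing reciprocals gives
\begin{equation*}
\sum_{k=1}^3 \frac{1}{\lambda_k(g)} \;\geq\; \sum_{k=1}^3 \frac{1}{\mu_k} \;=\; \mathrm{tr}(A^{-1} B) \;=\; \frac{3}{8\pi}\,\mathrm{tr}\, B \;=\; \frac{3\, A_g(\mathbb{S}^2)}{8\pi},
\end{equation*}
which is exactly \eqref{eqHersch}.

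The hard part will be the equality/rigidity statement. Equality forces $\lambda_k(g) = \mu_k$ for each $k$, so every $\tilde x_i = x_i \circ \phi$ is itself a $\Delta_g$-eigenfunction, with eigenvalue in $\{\lambda_1, \lambda_2, \lambda_3\}$. Combining this with the 2D conformal covariance $\Delta_{\phi^* g_0}(x_i \circ \phi) = 2(x_i \circ \phi)$ and $\Delta_g = e^{-2u} \Delta_{g_0}$, the eigenfunction equations reduce to a pointwise proportionality between $e^{2u}$ and the conformal factor of $\phi$, which forces the three eigenvalues to coincide and $g$ to be (up to scale) the pullback of $g_0$ by a M\"obius transformation---i.e.\ a round metric.
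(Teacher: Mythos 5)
The paper itself does not give a proof of Hersch's theorem; it is stated as a cited classical result (\cite{hersch}), so there is no in-paper argument to compare against. Judged on its own, your proof is correct and is essentially Hersch's original argument, with a slicker packaging via the matrix pencil $(A,B)$ and the identity $\sum_k \mu_k^{-1} = \mathrm{tr}(A^{-1}B)$, which cleanly replaces Hersch's step of rotating in $\mathbb{R}^3$ to diagonalize the mass matrix and then using the ordered Rayleigh quotients term by term.

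Two points in the rigidity discussion deserve to be spelled out. First, the eigenvectors of the pencil $(A,B)$ are certain linear combinations $w_k = \sum_i c^{(k)}_i \tilde x_i$, not the $\tilde x_i$ themselves, unless you have first composed $\phi$ with a rotation of $\mathbb{R}^3$ so that $B$ is diagonal with nonincreasing entries. You should either perform that rotation (legitimate, since rotations are conformal automorphisms that preserve both the balancing condition and the identity $A = \tfrac{8\pi}{3}I$), or phrase the rigidity step in terms of the $w_k$; either way the pointwise-proportionality argument applies verbatim to $(c\cdot x)\circ\phi$ for $c\in\mathbb{R}^3$. Second, the passage from $\lambda_k = \mu_k$ to ``each $w_k$ is a $\Delta_g$-eigenfunction'' is not purely a min-max equality statement; it needs a short case analysis on multiplicities. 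For $k=1$ it is immediate ($R[w_1]=\lambda_1$ among mean-zero functions). For $k=2,3$ one must check that $w_k$ is $L^2$-orthogonal to the full lower eigenspaces; this follows because when $\lambda_{k-1}<\lambda_k$ the earlier $w_j$'s already span the lower eigenspaces (by dimension count), and when $\lambda_{k-1}=\lambda_k$ the Rayleigh quotient of $w_k$ already equals the lower eigenvalue. With these two points filled in, the pointwise identity $2e^{2(v-u)}=\lambda_{k}$ on $\{w_k\neq 0\}$, together with $\bigcup_k\{w_k\neq 0\}=\mathbb{S}^2$ and continuity, forces $\lambda_1=\lambda_2=\lambda_3$ and $e^{2u}=\text{const}\cdot e^{2v}$, i.e.\ $g$ round, as you state.
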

From this theorem, he deduced that for metrics $g$ on $\mathbb{S}^2$ again,
\begin{equation} \label{eqHersch2} \frac{1}{\bar{\lambda}_1(g)} + \frac{2}{\bar{\lambda}_2(g)} \geq \frac{3}{8\pi} \end{equation}
with equality if and only if $g$ is a round metric and that
\begin{equation} \label{eqHersch3} \bar{\lambda}_1(g) \leq 8\pi \end{equation}
with equality if and only if $g$ is a round metric. The latter inequality gives a complete answer to the two previous questions on the sphere. However, the refinements by Yang and Yau of the inequality \eqref{eqHersch3} are not optimal anymore for surfaces with higher genuses, except in genus $2$ (see \cite{karpukhin-2}), and they left open the question of existence of maximizers for years. In the case of the tori, Berger \cite{berger} conjectured that the flat equilateral torus is the maximizer of the first eigenvalue. This result was proved by Nadirashvili \cite{nadirashvili}, opening knew methods to build maximizers of one eigenvalue for any topology, notably using that the maximizers have to be induced metrics of a minimal immersion into a sphere.

After several works for (not necessarily oriented) surfaces with low genuses and for $k\leq 2$ initiated by \cite{nadirashvili}, a global variational procedure for maximization of one eigenvalue was then given in \cite{petrides} for $k=1$ and in \cite{petrides-2} for $k>1$. We obtain new maximizers of higher genuses (see \cite{petrides}) and the combination of \cite{petrides-2} and \cite{Ejiri} leeds to an explicit computation of $\Lambda_k(\mathbb{S}^2)$ (see \cite{knpp}) and this method was extended for $\Lambda_k(\mathbb{RP}^2)$ \cite{karpukhin-3}. Since then, other different variational methods also appeared (see e.g \cite{KS} in the special cases $k=1$ and $k=2$). 

We highlight that while the round sphere is the unique maximizer of $g \mapsto \bar{\lambda}_1(g)$, the functional $g \mapsto \bar{\lambda}_2(g)$ does not have any maximizer at all. Indeed, it is known from \cite{nadirashvili-2}, \cite{petrides} and \cite{knpp} that 
\begin{equation} \label{eqk=2sphere} \Lambda_2(\mathbb{S}^2) = 16\pi \text{ and that } \bar{\lambda}_2(g) < 16\pi \end{equation}
for any smooth metric $g$ on the sphere. $16\pi$ corresponds to the renormalized eigenvalue of two spheres of same area. In fact, this property is general for higher eigenvalues (see \cite{knpp}), $\Lambda_k(\mathbb{S}^2) = 8 \pi k$ and there is not any maximizer but we can build maximizing sequences converging to a disjoint union of $k$ spheres with same area.

\medskip

In the current paper, we use the very general result coming from \cite{Pet20} (see also \cite{Pet22}) of the case of one eigenvalue, for variational problems on \textit{combination} of eigenvalues. For instance, the functional involved in the left-hand side of the Hersch inequality \eqref{eqHersch} is admissible in this setting: we ask for minimizers. Since the motivation of the current paper is to build embedded minimal spheres into ellipsoids, we focus on combinations of two eigenvalues because of Theorem \eqref{theoembedded}, so that we look at functionals like the one involved in \eqref{eqHersch2}. Of course, the inequality \eqref{eqHersch2} shows that we do not build non-planar minimal spheres by minimization of this functional. However, the admissible functionals given in \cite{Pet20} (see Theorem \ref{theomin} below) are numerous and we can choose for instance functionals like
\begin{equation} \label{eqfunctionalone} g \mapsto \frac{1}{\bar{\lambda}_1(g)} + \frac{s}{\bar{\lambda}_2(g)} \end{equation}
for some parameter $s>0$. For $s\leq 2$, the round sphere is again the unique minimizer but we can hope that there are some $s>2$ such that minimizers give existence of non-planar minimal spheres into ellipsoids. As explained in Section \ref{sectionpresentation}, there is such a parameter $s$. In order to prove Theorem \ref{theomain}, we figured out that minimizing the functional
\begin{equation} \label{eqfunctionaltwo} g \mapsto \frac{1}{\bar{\lambda}_1(g)+ t \bar{\lambda}_2(g)} \end{equation}
for some parameter $t>0$ is also interesting to build a one parameter family of non planar minimal surfaces into ellipsoids (see Section \ref{sectionpresentation}). However, for deep reasons (see Theorem \ref{theomin} below), we need to check some "gap properties" on eigenvalues for this one. 

More precisely, we need the sequence of eigenvalues $\bar{\lambda}_1(g_k)$ associated to minimizing sequence of metrics $g_k$ built in \cite{Pet20} for the functional \eqref{eqfunctionaltwo} not to converge to $0$. Indeed, if $\bar{\lambda}_1(g_k)\to 0$, we cannot prevent from topological degeneration of the sequence of the surfaces $(\mathbb{S}^2,g_k)$ as $k\to +\infty$: one can loose connectedness of the limiting surface. This degeneration occurs if we look at the functionnal $g \mapsto \bar{\lambda}_2(g)$. Indeed, as mentioned in \eqref{eqk=2sphere}, $g \mapsto \bar{\lambda}_2(g)$ cannot have maximizers but $\Lambda_2(\mathbb{S}^2) = 16\pi$ corresponds to the second eigenvalue of the disjoint union of two round spheres with same area. Then natural maximizing sequences are sequences of metrics on the sphere which "bubble converge" to two round spheres with same area. However, using a very precise asymptotic analysis on the first and second eigenvalues of such a sequence of metrics, we prove that
\begin{theo} \label{theoasymptotic}
There is a one parameter family of metrics $g_{\eps}$ on $\mathbb{S}^2$ such that
$$ \bar{\lambda}_1(g_{\eps}) = \frac{4\pi}{\ln\frac{1}{\eps}} + O\left( \frac{1}{\left(\ln\frac{1}{\eps}\right)^2}\right) \text{ and } \bar{\lambda}_2(g_{\eps}) = 16\pi - 48\pi \eps^2 + o\left( \eps^2\right)$$
as $\eps \to 0$.
\end{theo}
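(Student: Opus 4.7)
The plan is to exhibit $g_\eps$ as an explicit rotationally symmetric conformal metric on $\mathbb{S}^2$ that bubble-converges to a disjoint union of two unit round spheres, and then to carry out a sharp perturbation analysis of the bottom of its spectrum. Identify $\mathbb{S}^2$ with $\mathbb{C} \cup \{\infty\}$ via stereographic projection and set $g_\eps = \rho_\eps^2 |dz|^2$, where $\rho_\eps$ is chosen so that $(\mathbb{S}^2, g_\eps)$ is isometric to two round unit spheres with a small cap of conformal radius $\eps$ removed, joined by a narrow collar. One arranges the construction to be invariant under the reflection interchanging the two halves and to have total area $A_{g_\eps}(\mathbb{S}^2) = 8\pi + O(\eps^2)$ with an explicit next-order coefficient; as $\eps \to 0$ the induced metric on each half converges smoothly on compact sets away from the neck to the round metric of the unit sphere.

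For $\bar{\lambda}_2(g_\eps)$, I would combine a variational upper bound with a concentration-compactness lower bound. In the formal limit $\eps = 0$, the Laplacian on the disjoint union of two unit spheres has first nontrivial eigenvalue $2$ of multiplicity six, spanned by the restrictions to each component of the three ambient coordinate functions. Transplanting these onto $(\mathbb{S}^2, g_\eps)$ with smooth cut-offs near the neck and splitting by the $\mathbb{Z}/2$-reflection into symmetric and antisymmetric parts produces a three-dimensional subspace of test functions, orthogonal to constants, whose Rayleigh quotients take the form $2 + c\eps^2 + o(\eps^2)$ with explicit $c$. For the matching lower bound, standard elliptic estimates together with a Dirichlet-to-Neumann decoupling show that any $k$-th eigenfunction with $k \leq 4$ converges up to subsequences to a first eigenfunction of the unit sphere on each component; a Hadamard-type perturbation formula then pins down the $\eps^2$ correction. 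Multiplying the resulting expansion of $\lambda_2(g_\eps)$ by $A_{g_\eps}(\mathbb{S}^2)$ and collecting the $\eps^2$ terms gives $\bar{\lambda}_2(g_\eps) = 16\pi - 48\pi \eps^2 + o(\eps^2)$.

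For $\bar{\lambda}_1(g_\eps)$, the reflection symmetry forces the ground state to be antisymmetric and of tunneling type. A natural test function is $+1$ on the upper sphere, $-1$ on the lower sphere, interpolated by the logarithmic harmonic extension in the neck: since the neck is, conformally, an annulus of modulus of order $\ln(1/\eps)$, one finds $\int |\nabla u|^2\,dA_{g_\eps} = \tfrac{8\pi}{\ln(1/\eps)}(1 + o(1))$ and $\int u^2\,dA_{g_\eps} = 8\pi(1 + o(1))$, which yields the upper bound $\bar{\lambda}_1(g_\eps) \leq \tfrac{4\pi}{\ln(1/\eps)} + O\!\left(\frac{1}{(\ln(1/\eps))^2}\right)$. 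The matching lower bound follows from a sharp conformal-capacity inequality for antisymmetric functions together with spectral monotonicity. The main obstacle will be the precise $\eps^2$ term in the expansion of $\bar{\lambda}_2$: one must carefully match, across the neck, the approximate eigenfunctions built from coordinate functions on each unit sphere, rule out any $\eps^2$-scale leakage of mass into the neck, and bookkeep the interaction between the perturbation of $\lambda_2$ and of the area so that their product yields exactly the coefficient $-48\pi$.
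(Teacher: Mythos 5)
Your plan is in the right spirit and aligned with the paper's strategy at the level of architecture: a conformally degenerating metric on $\mathbb{S}^2$ that bubbles into two unit round spheres along a thin neck, with the reflection symmetry enforcing an antisymmetric logarithmic first eigenfunction (whence the $\frac{4\pi}{\ln(1/\eps)}$ scale) and a symmetric second eigenfunction converging to a coordinate function on each bubble (whence $\lambda_2 \to 2$, so $\bar\lambda_2 \to 16\pi$). The log-cutoff test function for $\lambda_1$ and the transplanted coordinate functions for $\lambda_2$ also mirror the paper's test-function computation, and your remark that the sharp lower bound for $\lambda_1$ should follow from a capacity/tunneling estimate is comparable to the Green-representation argument used there.

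The genuine gap is precisely where you flag "the main obstacle": extracting the coefficient $-48\pi$ in front of $\eps^2$. You propose to get it from a Rayleigh-quotient upper bound together with "a Hadamard-type perturbation formula" on the lower-bound side. That last step is not available here. A Hadamard/first-variation formula expands $\lambda_2(g_\eps)$ about a fixed background metric $g_0$, but in this problem there is no such background: $g_\eps$ degenerates to a disconnected limit and the eigenfunctions develop a scale $\eps$. Moreover, the test-function upper bound alone only yields $\lambda_2(g_\eps) \le 2 + O(\eps^2)$ (with a constant whose sign must itself be checked), which is not enough to pin down the coefficient without a matching lower bound of the same precision. The paper handles this by a full blow-up analysis of the eigenfunction equation: after symmetrizing, one subtracts from $\varphi_{\eps,2}$ the leading bubble profile, renormalizes the remainder, passes to the limit in the rescaled PDE on $\mathbb{R}^2$, and integrates the resulting limit equation against the kernel function $y_1$ on $\mathbb{S}^2$ to identify $\lambda_2(g_\eps) - 2 = -6\eps^2 + o(\eps^2)$; a separate contradiction argument excludes a radial second eigenfunction. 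None of this is replaced by the Dirichlet-to-Neumann decoupling or perturbation-theory step in your outline, and without some substitute of comparable precision the $-48\pi\eps^2$ term is not established. You should also fix the construction so that the area is exactly $8\pi$ (as in the paper's explicit $\omega_\eps$), or else prove the next-order area coefficient and track it, since otherwise the bookkeeping you mention at the end is an additional unverified computation.
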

If for one minimizing sequence $g_k$ for \eqref{eqfunctionaltwo}, we have $\bar{\lambda}_1(g_k) \to 0$, then we must have $sup_g\left( \bar{\lambda}_1(g)+ t \bar{\lambda}_2(g)\right) = 16\pi t $ by \eqref{eqk=2sphere}. Testing this functional for the one parameter family $g_\eps$ given by Theorem \ref{theoasymptotic}, we have
$$  \bar{\lambda}_1(g_{\eps})+ t \bar{\lambda}_2(g_{\eps}) = 16\pi t + \frac{4\pi}{\ln\frac{1}{\eps}} + O\left( \frac{1}{\left(\ln\frac{1}{\eps}\right)^2}\right) \text{ as } \eps \to 0 $$
so that $sup_g \left( \bar{\lambda}_1(g)+ t \bar{\lambda}_2(g) \right) > 16\pi t $. Therefore, we obtain the "gap assumption" that garanties convergence of some minimizing sequences to a smooth metric on $\mathbb{S}^2$. 

\medskip

As a final remark, we refer to \cite{Pet23} to read in another way the methods we perform in the current paper. One can observe fruitful similarities and subtle differences between the closed case (Laplace eigenvalues) and the case with boundary (Steklov eigenvalues).

\medskip

The paper is organised as follows:

In Section \ref{sectionpresentation}, we give a general presentation of the proof of Theorem \ref{theomain}. In particular, we emphasize that the optimization of combinations of eigenvalues is given in a space of \textit{symmetric} conformal factors. We will refer to \cite{Pet22} and \cite{Pet23} for detailed steps of the method. In Section \ref{sectioncriticalmetrics}, we describe the link between critical metrics of combinations of eigenvalues and minimal immersions into ellipsoids. We then focus on 2-dimensional ellipsoids associated to first and second eigenvalues in order to give conditions such that they are not critical or not minimizers for choices of combination of these eigenvalues. Section \ref{sectionembeddedsphere} is devoted to the proof of Theorem \ref{theoembedded} and Section \ref{sectionasymptotic} is devoted to the proof of Theorem \ref{theoasymptotic}.


\section{Precise presentation of the results} \label{sectionpresentation}

Let $F : \left(\mathbb{R^*}\right)^2 \to \mathbb{R}_+$ be a $\mathcal{C}^1$ function such that $\partial_1 F$ and $\partial_2 F$ are negative functions. The following first result characterizes the critical metrics for the functional $g\mapsto F\left(\bar{\lambda}_1(g), \bar{\lambda}_2(g) \right) $.

\begin{theo} \label{theocriticalembedded}
Let $g$ be a critical metric for $g\mapsto F\left(\bar{\lambda}_1(g), \bar{\lambda}_2(g) \right)$ on $\mathbb{S}^2$. Then 

Either $\lambda_1(g) = \lambda_2(g)$ and $g$ is a round metric, 

Or $\lambda_1(g) \neq \lambda_2(g)$ and there is a minimal isometric immersion $\Phi : \left(\mathbb{S}^2,g_0 \right) \to \mathcal{E}$ such that $g = e^{2u}g_0$ where $\mathcal{E}$ is one of the following ellipsoids
$$ \mathcal{ E }=  \mathcal{ E }_2 := \left\{ x = (x_0,x_1,x_2) \in \mathbb{R}^3 ; \bar{\lambda}_1(g) x_0^2 +  \bar{\lambda}_2(g) \left(x_1^2 + x_2^2 \right) = 1 \right\} $$
$$\text{ or }\mathcal{ E }= \mathcal{ E }_3 := \left\{ x = (x_0,x_1,x_2,x_3) \in \mathbb{R}^4 ; \bar{\lambda}_1(g) x_0^2 +  \bar{\lambda}_2(g) \left(x_1^2 + x_2^2 + x_3^2\right) = 1 \right\} $$
endowed with the induced metric on $\mathcal{E}$ of the Euclidean metric and the conformal factor $e^{2u}$ is defined as
$$ e^{2u} = \frac{H_{\Phi(\mathbb{S}^2)}\circ \Phi}{\left(\bar{\lambda}_1(g)^2 \Phi_0^2 + \bar{\lambda}_2(g)^2 \sum_{j\geq 1}\Phi_j^2\right)^{\frac{1}{2}} } $$
where $H_{\Phi(\mathbb{S}^2)}$ is the mean curvature of $\Phi(\mathbb{S}^2)$ in $\R^n$ and $\Phi_j$ for $j\geq 0$ are the coordinate functions of $\Phi$. Moreover, $\Phi_0$ is an eigenfunction with respect to $\lambda_1(g)$ and $\{ \Phi_j \}_{j\geq 1}$ are independent eigenfunctions with respect to $\lambda_2(g)$, and we have that
\begin{equation} \label{eqmassescriticalmetrics} \int_{\mathbb{S}^2} \Phi_0^2 dA_g =  \frac{t_1 A_g\left(\mathbb{S}^2\right)}{t_1 \bar{\lambda}_1(g) + t_2 \bar{\lambda}_2(g)} \text{ and } \int_{\mathbb{S}^2} \sum_{j \geq 1} \Phi_j^2 dA_g =  \frac{t_2 A_g\left(\mathbb{S}^2\right)}{t_1 \bar{\lambda}_1(g) + t_2 \bar{\lambda}_2(g)} \end{equation}
where $t_j = \left\vert \partial_i F(\bar{\lambda}_1(g),\bar{\lambda}_2(g)) \right\vert$ for $j=1,2$.
\end{theo}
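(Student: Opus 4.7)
The plan is to derive the claimed structure from the first-order criticality condition for $\Lambda(g) := F(\bar\lambda_1(g), \bar\lambda_2(g))$, following the general variational framework for combinations of eigenvalues developed in \cite{Pet20,Pet22}. First I would compute the one-sided directional derivatives of $\bar\lambda_k$ along conformal perturbations $g_s = (1 + s\phi) g$, $\phi \in C^\infty(\mathbb{S}^2)$. A direct calculation (using $\Delta_{g_s} = (1+s\phi)^{-1} \Delta_g$ together with Kato perturbation theory) gives, for every $L^2(g)$-normalized eigenfunction $u$ associated to $\lambda_k(g)$, the formula
$$D\bar\lambda_k(g)[\phi]_{u} = \int_{\mathbb{S}^2} \phi \left( \lambda_k(g) - \bar\lambda_k(g)\, u^2 \right) dA_g,$$
with $D^{+}\bar\lambda_k(g)[\phi] = \max_{u \in E_k,\,\|u\|_{L^2(g)}=1}$ of this quantity (and $\min$ for $D^-$), where $E_k$ denotes the $\lambda_k(g)$-eigenspace. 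Since $\partial_j F<0$, criticality of $g$ combined with a Hahn--Banach separation argument in $C^0(\mathbb{S}^2)$ (as carried out in \cite{Pet20}) yields strictly positive Lagrange multipliers $t_j = |\partial_j F(\bar\lambda_1(g),\bar\lambda_2(g))|$ and convex combinations $V^2 := \sum_\alpha \mu_\alpha v_\alpha^2$ and $W^2 := \sum_\beta \nu_\beta w_\beta^2$ of squares of $L^2(g)$-orthonormal families $\{v_\alpha\}\subset E_1$ and $\{w_\beta\}\subset E_2$, with $\mu_\alpha,\nu_\beta \geq 0$ and $\sum_\alpha \mu_\alpha = \sum_\beta \nu_\beta = 1$, satisfying the pointwise Euler--Lagrange identity
$$t_1\, \bar\lambda_1(g)\, V^2 + t_2\, \bar\lambda_2(g)\, W^2 \;\equiv\; \frac{t_1 \bar\lambda_1(g) + t_2 \bar\lambda_2(g)}{A_g(\mathbb{S}^2)} \qquad \text{on } \mathbb{S}^2.$$

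If $\lambda_1(g) = \lambda_2(g)$, then $V^2$ and $W^2$ are both convex combinations of squares of first eigenfunctions and the identity reads $t_1 V^2 + t_2 W^2 = (t_1 + t_2)/A_g(\mathbb{S}^2)$; this is exactly the Nadirashvili--Hersch criticality condition for $g \mapsto \bar\lambda_1(g)$ alone, so by the rigidity part of Hersch's inequality \eqref{eqHersch3}, $g$ is a round metric. Assume now $\lambda_1(g) < \lambda_2(g)$. After a suitable rotation inside $E_1$ and rescaling, $V^2$ can be written as the square of a single eigenfunction (otherwise a further reduction lands us back in the degenerate round case), which becomes $\Phi_0$ after absorbing the factor $\sqrt{A_g(\mathbb{S}^2)\, t_1/(t_1\bar\lambda_1 + t_2\bar\lambda_2)}$. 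Similarly absorbing $\sqrt{A_g(\mathbb{S}^2)\, t_2\, \nu_\beta/(t_1\bar\lambda_1 + t_2\bar\lambda_2)}$ into each surviving $w_\beta$ produces independent $\lambda_2$-eigenfunctions $\Phi_1, \ldots, \Phi_n$, and the identity rewrites as
$$\bar\lambda_1(g)\, \Phi_0^2 + \bar\lambda_2(g)\, \sum_{j \geq 1} \Phi_j^2 \equiv 1.$$
Integrating $\Phi_0^2$ and $\sum_j \Phi_j^2$ against $dA_g$ yields the mass formulas \eqref{eqmassescriticalmetrics}. The dimension is controlled by $n \leq \operatorname{mult}(\lambda_2(g)) \leq 3$ via the multiplicity bound of \cite{HON} cited in the text, so the target is $\mathcal{E}_2$ or $\mathcal{E}_3$.

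Finally, since each coordinate $\Phi_j$ is a $\Delta_g$-eigenfunction whose eigenvalue coincides with the corresponding parameter of $\mathcal{E}$, the Takahashi-type characterization for ellipsoids recalled around \eqref{defg} implies that $\Phi := (\Phi_0, \Phi_1, \ldots, \Phi_n) : \mathbb{S}^2 \to \mathcal{E}$ is a branched conformal minimal immersion. Setting $g_0 := \Phi^{\star}\xi$ and reading off \eqref{defg} evaluated along $\Phi$ then identifies the conformal factor $e^{2u}$ with the formula in the statement, and gives $g = e^{2u} g_0$.

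The main obstacle lies in the first step: because $\bar\lambda_k$ fails to be $C^1$ at metrics where multiplicity is present, criticality cannot be expressed as a naive Euler--Lagrange equation, and producing the multipliers $t_j$ together with the convex combinations $V^2, W^2$ requires a careful duality argument. The strict negativity of \emph{both} partials $\partial_j F$ is crucial here: it guarantees $t_1, t_2 > 0$, which in turn is what makes $\lambda_2$ genuinely enter the Euler--Lagrange system and produces the higher-dimensional ellipsoid rather than collapsing onto a single-eigenvalue extremal.
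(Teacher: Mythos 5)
Your derivation of the Euler--Lagrange structure is correct and is, as you say, essentially a recap of the argument the paper cites from \cite{Pet20}: the directional-derivative formula for $\bar\lambda_k$ under conformal perturbations, the Hahn--Banach duality producing positive multipliers $t_j=|\partial_j F|$ and convex combinations $V^2$, $W^2$, the pointwise identity $t_1\bar\lambda_1 V^2+t_2\bar\lambda_2 W^2\equiv (t_1\bar\lambda_1+t_2\bar\lambda_2)/A_g$, the rescaling into an ellipsoid map, the mass formulas \eqref{eqmassescriticalmetrics}, and the dimension bound via the multiplicity result of \cite{HON} are all right and match the paper's route. (Two small quibbles: when $\lambda_1<\lambda_2$ the space $E_1$ is automatically one-dimensional, so no ``rotation inside $E_1$'' is needed; and for $\lambda_1=\lambda_2$ the conclusion that $g$ is round does not really follow from ``the rigidity part of Hersch's inequality'' but from the classification of minimal spheres into spheres by first eigenfunctions.)

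There are, however, two genuine gaps. The less serious one is that the ``Takahashi-type characterization recalled around \eqref{defg}'' only goes in the direction \emph{minimal $\Rightarrow$ coordinates are eigenfunctions}. To go the other way you have to first observe that the pointwise identity $\langle\Lambda\Phi,\Phi\rangle\equiv 1$ together with $\Delta_g\Phi=\Lambda\Phi$ makes $\Phi$ \emph{harmonic} into $\mathcal{E}$ (since $\Lambda\Phi$ is normal to $\mathcal{E}$), and then conclude conformality from the holomorphicity of the Hopf quadratic differential of a harmonic map and the absence of nonzero holomorphic quadratic differentials on $\mathbb{S}^2$; your argument does not mention this at all, and simply invoking Takahashi is circular.

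The more serious gap is that the theorem asserts an honest (unbranched) \emph{immersion} $\Phi$, with $g_0=\Phi^\star\xi$ a smooth metric, and the paper explicitly flags the absence of branch points as \emph{the main novelty} of the statement, proved in Section \ref{sectionembeddedsphere}. Your argument stops at ``branched conformal minimal immersion'' and never rules out branch points. The paper's argument for this (Step 1 of Section \ref{sectionembeddedsphere}, together with the short nodal-set discussion in Section \ref{subsecnodalset}) is quite specific to first and second eigenfunctions: a branch point $z_0$ would force $\{\eta(z_0),\eta_x(z_0),\eta_y(z_0)\}^{\perp}$ to contain a whole great circle of directions $v\in\mathbb{S}^2$, each of which gives a second eigenfunction $\langle\eta,v\rangle$ with a singular zero at $z_0$; this contradicts the constraint, following from the Courant nodal theorem and \cite{HON}, that antipodal directions $\pm v$ cannot both lie in the same connected component of the set of directions whose nodal set is non-embedded. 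A parallel nodal-count argument handles the $2$-dimensional target case. None of this appears in your proposal, so as written the proposal does not establish the statement.
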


The existence of such a map $\Phi$ was proved in \cite{Pet20} in a much more general setting (see also \cite{PT23}). However, in this general setting, we cannot obtain more than (possibly branched) minimal \textit{immersions} into one ellipsoid of \textit{possibly high dimension}. In fact, we noticed in \cite{Pet20} that the dimension of the target ellipsoid has to be lower bounded by $3$ because $\Phi$ is an immersion, and that since the coordinate functions are independent, the multiplicity of eigenvalues bounds the dimension of the target ellipsoid. By \cite{HON} we know that the multiplicity of first and second eigenvalues is bounded by $3$. Therefore, the dimension of the target ellipsoid is equal to $2$ if $\lambda_1(g) =\lambda_2(g)$ or if $\lambda_1(g) \neq \lambda_2(g)$ and $\lambda_2(g)$ has multiplicity $2$, and it is equal to $3$ if $\lambda_1(g) \neq \lambda_2(g)$ and $\lambda_2(g)$ has multiplicity $3$. 

The main novelty of Theorem \ref{theocriticalembedded} is that the map $\Phi$ has to be \textit{immersed} without branched points because the coordinates are first and second eigenfunctions. This result is proved in section \ref{sectionembeddedsphere}

Critical metrics for combinations of first and second eigenvalues are then good candidates to obtain non equatorial minimal embeddings into a 3-d ellipsoid. Minimizing the functional $g\mapsto F\left(\bar{\lambda}_1(g), \bar{\lambda}_2(g) \right) $ is the simplest way to construct them. We set
$$ \Lambda_F(\mathbb{S}^2) = \inf_{g} F\left(\bar{\lambda}_1(g), \bar{\lambda}_2(g) \right) \hskip.1cm,$$
where the infimum holds with respect to any metric $g$ on the sphere $\mathbb{S}^2$. 

\begin{theo} \label{theomin}
If 
\begin{equation} \label{mainassumption} \Lambda_F(\mathbb{S}^2) < \lim_{\eps\to 0}F(\eps,16\pi) \end{equation}
 then there is a smooth metric $g$ on $\mathbb{S}^2$, which is a minimizer for $\Lambda_F(\mathbb{S}^2)$. \end{theo}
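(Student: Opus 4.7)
The plan is to show that the gap condition \eqref{mainassumption} prevents any minimizing sequence from degenerating, following the concentration--compactness scheme of \cite{Pet22,Pet23} adapted to a combination of two eigenvalues. Normalize $A_{g_k}(\mathbb{S}^2)=1$ and write $g_k=e^{2u_k}g_0$ with $g_0$ the round metric; Hersch's inequality \eqref{eqHersch3} and the bound \eqref{eqk=2sphere} give $\bar{\lambda}_1(g_k)\le 8\pi$ and $\bar{\lambda}_2(g_k)<16\pi$, so both eigenvalues live in a compact interval. Concentration of $e^{2u_k}\,dA_{g_0}$ at a single point is then killed by the Hersch--Nadirashvili trick: precompose $u_k$ with a M\"obius transformation of $\mathbb{S}^2$ so that the center of mass of $e^{2u_k}\,dA_{g_0}$ lies at the origin of $\R^3$; this leaves the eigenvalues of $g_k$ unchanged.

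After extraction, $e^{2u_k}\,dA_{g_0}\rightharpoonup \mu$ as a probability measure on $\mathbb{S}^2$, and by the analysis in \cite{Pet22} this limit decomposes as $e^{2u_\infty}\,dA_{g_0}+\sum_i\alpha_i\delta_{p_i}$, the atoms representing round spherical bubbles. The limiting eigenvalue problem is then Laplacian on the disjoint union of the regular piece and round spheres carrying the atomic masses $\alpha_i$. Two outputs of this bubble analysis are needed: (i) in every configuration with at least one atom, the asymptotic first eigenfunction must be locally constant on each connected component, hence $\bar{\lambda}_1(g_k)\to 0$; (ii) the upper bound $\bar{\lambda}_2(g_k)\le 16\pi$ is preserved in the limit. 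Granting these, the gap assumption closes the argument: if bubbling occurred, then because $F$ is strictly decreasing in each argument,
$$F(\bar{\lambda}_1(g_k),\bar{\lambda}_2(g_k))\ \ge\ F(\bar{\lambda}_1(g_k),16\pi)\ \longrightarrow\ \lim_{\eps\to 0}F(\eps,16\pi)\ >\ \Lambda_F(\mathbb{S}^2),$$
contradicting minimality. Therefore $\mu=e^{2u_\infty}\,dA_{g_0}$ is purely absolutely continuous with bounded density, and $g_\infty:=e^{2u_\infty}g_0$ attains $\Lambda_F(\mathbb{S}^2)$.

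Smoothness of $g_\infty$ follows from the Euler--Lagrange description given by Theorem \ref{theocriticalembedded}: the minimizer is the induced metric of a (possibly branched) minimal immersion into an ellipsoid, and the explicit expression of its conformal factor in terms of the mean curvature of the image yields the required regularity by standard elliptic bootstrap. The main obstacle is the bubble analysis underlying (i)--(ii): one must classify all degenerations of $(\mathbb{S}^2,g_k)$ compatible with a bounded value of $F(\bar{\lambda}_1,\bar{\lambda}_2)$ and show that each of them forces $\bar{\lambda}_1\to 0$, so that the single gap assumption \eqref{mainassumption} is strong enough to rule them out. This is the analogue, for combinations of eigenvalues, of the fine asymptotic analysis of \cite{knpp,nadirashvili-2} identifying $\Lambda_2(\mathbb{S}^2)=16\pi$ and showing that this value is never attained.
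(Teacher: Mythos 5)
The paper does not give a self-contained proof of this theorem: right after the statement it says ``This theorem was proved in \cite{Pet20} in a much more general setting (see also \cite{Pet22}),'' so the intended proof is the one in those references, based on the Palais--Smale/bubble-tree machinery for functionals of eigenvalues. Your sketch is in the same spirit --- normalize, take a weak limit of the conformal measures, decompose into a regular part plus bubbles, use the gap hypothesis to rule out bubbling --- and the core logical step (bubbling forces $\bar\lambda_1(g_k)\to0$, which together with $\bar\lambda_2\le16\pi$ and $\partial_i F<0$ pushes $F$ above $\lim_{\eps\to0}F(\eps,16\pi)$) is indeed the right use of \eqref{mainassumption}.

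Two points you treat too lightly deserve emphasis. First, you work with a bare minimizing sequence, but the method in \cite{Pet20,Pet22} passes through Palais--Smale (or approximate Euler--Lagrange) sequences precisely because the regularity of the limit metric cannot be read off from minimality alone: your last paragraph invokes Theorem~\ref{theocriticalembedded}, but that theorem describes \emph{critical} metrics, and you still have to show that the weak limit $e^{2u_\infty}\,dA_{g_0}$ is not merely an $L^1$ density but satisfies the critical-point condition (existence of an eigenmap into an ellipsoid); this is exactly what the Palais--Smale structure and the associated Harmonic-map-type analysis provide, and without it the ``standard elliptic bootstrap'' has nothing to bootstrap. Second, your claim (i) needs the remark that the M\"obius normalization excludes a single atom of full mass (a Dirac at a point $p\in\mathbb{S}^2$ has center of mass $p\neq0$); once this is said, any bubbling configuration genuinely has at least two metric components in the limit and $\bar\lambda_1(g_k)\to0$ follows from the eigenvalue decoupling along pinching necks, which is the nontrivial lemma you correctly identify as the technical heart but do not prove. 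So the route is right, but as written it is a roadmap to the argument in \cite{Pet20,Pet22} rather than a proof.
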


This theorem was proved in \cite{Pet20} in a much more general setting (see also \cite{Pet22}). The sufficient condition \eqref{mainassumption} of Theorem \ref{theomin} is optimal since in the example $F(x_1,x_2) =\frac{1}{x_2}$, we know that there is not any minimizer for $\Lambda_F$ and we have the case of equality in \eqref{mainassumption}. 

Notice that \eqref{mainassumption} is already satisfied for many functions $F$. For instance it is automatic for functions such that $\lim_{\eps\to 0} F(\eps,16\pi) = +\infty$. 

Thanks to Theorem \ref{theoasymptotic}, we obtain much more applications $F$ satisfying the condition \eqref{mainassumption}.
One application is the existence of a maximizer of linear combinations of the first and second eigenvalues. Indeed, we set $F=f_t$ for $t>0$, where $f_t(x_1,x_2) = \frac{1}{x_1 +t x_2} $, it follows from Theorem \ref{theoasymptotic} that for some $\eps>0$ small enough 
$$f_t(0,16\pi) = \frac{1}{16\pi t} > f_t( \bar{\lambda}_1(g_{\eps}),\bar{\lambda}_2(g_{\eps}) ) \geq \Lambda_{f_t}(\mathbb{S}^2)$$
which is exactly the needed assumption \eqref{mainassumption} for Theorem \ref{theomin}. Of course, this Theorem \ref{theoasymptotic} applies for much more functions $F$ since the scale  $48\pi \eps^2$  converges to $0$ drastically faster than $\frac{4\pi}{\ln\frac{1}{\eps}}$ as $\eps\to 0$.

In the following proposition, we classify all the possible critical metrics corresponding to $2$-d ellipsoids, that is when the map $\Phi$ given by Theorem \ref{theocriticalembedded} is an isometry.  We will then obtain, a wild number of functions $F$ such that the target ellipsoid corresponding to minimizers cannot be 2-dimensional:

\begin{prop} \label{propE3}
We assume that $g$ is a critical point of $g \mapsto F\left(\bar{\lambda}_1(g), \bar{\lambda}_2(g) \right)$. We let $p := \bar{\lambda}_1(g)$ and $q := \bar{\lambda}_2(g)$. If there is one map $\Phi$ given by Theorem \ref{theocriticalembedded} such that the target ellipsoid is 2-dimensional, then

Either $p = q$, and $g$ is a round metric

or $p\neq q$ and $(\mathbb{S}^2,g)$ is isometric to $\mathcal{E}_{p,q} := \{ p x_0^2 + q x_1^2 +q x_2^2 = 1 \}$ endowed with the metric $g_{p,q} := \frac{H_{\mathcal{E}_{p,q}}}{\left(p^2 x_0^2 + q^2 x_1^2 + q^2 x_2^2 \right)^{\frac{1}{2}}} \xi$ of area $A_{p,q}$ such that $p$ is a first eigenvalue and $q$ is a second eigenvalue with respect to $g_{p,q}$ such that
\begin{equation} \label{eqcondderivative} \frac{\partial_2F(p,q)}{\partial_1F(p,q)} =  
\tau\left(\frac{q}{p}\right) \end{equation}
where $\tau$ is given in formula \eqref{eqdeftau}.

If in addition $g$ is a minimizer for $\Lambda_F\left(\mathbb{S}^2\right)$, then
\begin{equation} \label{eqcondenergy}  \Lambda_F(\mathbb{S}^2) = 
 \begin{cases} F(8\pi,8\pi) \text{ if } p=q \\
 F( p A_{p,q} , q A_{p,q} ) \text{ if } p\neq q \end{cases}  \end{equation}
\end{prop}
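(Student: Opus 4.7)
The plan is to invoke Theorem~\ref{theocriticalembedded} at once and split on whether $\lambda_1(g)=\lambda_2(g)$. In the case $\lambda_1(g)=\lambda_2(g)$, that theorem already gives $g$ round, and since the three coordinate functions of $\mathbb{S}^2\subset\mathbb{R}^3$ span the first eigenspace (multiplicity three), one has $\bar\lambda_1(g)=\bar\lambda_2(g)=8\pi$, hence $p=q=8\pi$ and $\Lambda_F(\mathbb{S}^2)=F(8\pi,8\pi)$ whenever $g$ is a minimizer.

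In the case $\lambda_1(g)\neq\lambda_2(g)$, Theorem~\ref{theocriticalembedded} produces a minimal isometric immersion $\Phi:(\mathbb{S}^2,g_0)\to\mathcal{E}$; by hypothesis $\mathcal{E}=\mathcal{E}_{p,q}$ is $2$-dimensional, so Theorem~\ref{theoembedded} upgrades $\Phi$ to an embedding and $\Phi(\mathbb{S}^2)=\mathcal{E}_{p,q}$. Pushing $g=e^{2u}g_0$ forward through $\Phi$, the conformal factor formula of Theorem~\ref{theocriticalembedded} is recognized as precisely the Takahashi-type expression defining $g_{p,q}$ in the proposition, so $(\mathbb{S}^2,g)\cong(\mathcal{E}_{p,q},g_{p,q})$. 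Takahashi's identity applied to the coordinate immersion of $\mathcal{E}_{p,q}$ then certifies that $p$ is a first and $q$ a second eigenvalue of $g_{p,q}$.

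To derive~\eqref{eqcondderivative} I would divide the two mass identities in~\eqref{eqmassescriticalmetrics}, yielding
$$\frac{\int_{\mathbb{S}^2}\sum_{j\geq1}\Phi_j^2\,dA_g}{\int_{\mathbb{S}^2}\Phi_0^2\,dA_g}=\frac{t_2}{t_1}=\frac{\partial_2 F(p,q)}{\partial_1 F(p,q)},$$
where the last equality uses that both partial derivatives are negative. Under the isometric identification the left-hand side becomes $\int_{\mathcal{E}_{p,q}}(x_1^2+x_2^2)\,dA_{g_{p,q}}\big/\int_{\mathcal{E}_{p,q}}x_0^2\,dA_{g_{p,q}}$. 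A homothety argument then shows this quotient depends only on $q/p$: the map $y\mapsto y/\sqrt\alpha$ sends $\mathcal{E}_{p,q}$ onto $\mathcal{E}_{\alpha p,\alpha q}$, the mean curvature and the denominator of the Takahashi conformal factor each gain a factor $\sqrt\alpha$ (so the conformal factor is unchanged), while the restricted Euclidean metric contracts by $1/\alpha$; hence $g_{\alpha p,\alpha q}$ pulls back to $(1/\alpha)g_{p,q}$ and both the numerator and the denominator above scale by $1/\alpha^2$. This produces the desired function $\tau(q/p)$ from~\eqref{eqdeftau} and proves~\eqref{eqcondderivative}.

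For~\eqref{eqcondenergy}, $\Lambda_F(\mathbb{S}^2)=F(\bar\lambda_1(g),\bar\lambda_2(g))$ by definition of a minimizer; the round case is immediate, while in the non-round case the isometry $(\mathbb{S}^2,g)\cong(\mathcal{E}_{p,q},g_{p,q})$ gives $\bar\lambda_i(g)=\lambda_i(g_{p,q})\cdot A_{p,q}$, which by Takahashi equals $pA_{p,q}$ for $i=1$ and $qA_{p,q}$ for $i=2$. The main subtlety I anticipate — and the step I would track most carefully — is the parallel use of two scalings: $p,q$ as the scale-invariant eigenvalues pinning down the ellipsoid $\mathcal{E}_{p,q}$, versus $p,q$ as the ordinary Laplace eigenvalues of the Takahashi metric $g_{p,q}$; keeping these conventions aligned is what makes \eqref{eqcondderivative} and \eqref{eqcondenergy} come out correctly.
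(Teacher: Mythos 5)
Your proposal follows essentially the same route as the paper (see the discussion right after the proposition and the computations in Section~\ref{sectioncriticalmetrics}): invoke Theorem~\ref{theocriticalembedded}, split on $\lambda_1(g)=\lambda_2(g)$, identify the critical metric with $(\mathcal{E}_{p,q},g_{p,q})$ in the non-degenerate case, and divide the mass identities~\eqref{eqmassescriticalmetrics} to obtain $\partial_2F/\partial_1F = \int(x_1^2+x_2^2)\,dA_g / \int x_0^2\,dA_g$. Your homothety argument cleanly shows this quotient depends only on $q/p$, but by itself it does not identify the function with the explicit formula~\eqref{eqdeftau}; for that one still needs the explicit evaluation of the integrals $A_s$ and $B_s$ via stereographic parametrization, exactly as the paper carries out in Section~\ref{sectioncriticalmetrics} (equivalently, your scale-invariance remark coincides with the paper's relation $A_{p,q}=\tfrac{1}{p}A_{q/p}$). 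One small mischaracterization: Takahashi's identity only certifies that $p$ and $q$ are \emph{some} eigenvalues of $g_{p,q}$; that they are the first and second eigenvalues is supplied by Theorem~\ref{theocriticalembedded} itself (the coordinates being first/second eigenfunctions), which you do invoke so the conclusion is unaffected. Finally, the normalization tension you flag at the end ($p=\bar\lambda_1(g)$ versus $\lambda_1(g_{p,q})=p$, forcing $A_{p,q}=1$ after identification) is indeed present in the paper's formulation and is resolved by working with the area-normalized representative of $g$; your handling is no less careful than the paper's.
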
 

In condition \eqref{eqcondderivative} we just explicitly compute the quotient $\frac{\int_{\mathbb{S}^2} x_0^2 dA_g}{A_g(\mathbb{S}^2) }$ for the 2-d ellipsoid $\mathcal{ E }_{p,q}$ and apply the last condition of Theorem \ref{theocriticalembedded}. 
By numerical computations, $\tau$ is a decreasing function and the condition \eqref{eqcondderivative} can only be realized when $\tau\left(\frac{p}{q}\right) \in ]t_1,2] $ for a number $t_1 \sim 0,2162$. 
More precisely, we have that
\begin{itemize}
\item $\tau(1) = 2$ corresponds to the round sphere $(p=q)$: in this case the round sphere is the unique possible 2-d ellipsoid corresponding to a critical metric. 
\item if $\tau\left(\frac{p}{q}\right) \in ]2,+\infty[$, then $q < p$, which means that $p$ cannot be a first or second eigenvalue on $\mathcal{E}_2$ since $q$ has multiplicity at least $2$.
\item if $\tau\left(\frac{p}{q}\right) \in ]0,t_1]$, then $p > q$ and $q A_{p,q} \geq 16\pi $ which means that the parameter $q$ is not a first or second eigenvalue on $\mathcal{E}_2$.
\end{itemize}
We want to prevent the conditions \eqref{eqcondderivative} or \eqref{eqcondenergy} from occuring. Of course, non-validity of one of these conditions depends on the choice of $F$. Let's give examples.

\subsection{Linear combination of first and second eigenvalues}

In the natural case of maximization of linear combinations of the two first eigenvalues, we set $F=f_t$, where $f_t(x_1,x_2) = \frac{1}{x_1 +t x_2} $ and as already said, minimizers always exist. 
Moreover, by numeric computations and proposition \eqref{propE3} there are at most two 2-dimensional critical ellipsoids for $f_t(\bar{\lambda}_1,\bar{\lambda}_2)$: either the sphere or the unique ellipsoid $\mathcal{E}_{p_t,q_t}$ with parameters $(p_t,q_t,q_t)$ such that $t = \tau\left(\frac{q_t}{p_t}\right) $. By numeric computations we have for any $t$
$$f_t(A_{p_t,q_t} p_t , A_{p_t,q_t}q_t) = \left(A_{p_t,q_t}(p_t+t q_t )\right)^{-1} \geq \left(8\pi (1+t)\right)^{-1} = f_t(8\pi,8\pi),$$ 
with equality if and only if $t=2$. We deduce that if the minimizer is a 2-d critical ellipsoid, it has to be a round metric. Then, knowing that $\Lambda_{f_t} > 16\pi t$, minimizers of $\Lambda_{f_t}$ give 3-d target ellipsoids if $t \geq 1$. In particular, we have a minimal metric $g_1$ for $t=1$, we can test it for the variational problem $\Lambda_{f_{1-\eps}}$. It gives:
$$ \Lambda_{f_{1-\eps}} - 16\pi (1-\eps) = \Lambda_{f_1} - 16\pi + \eps\left( \bar{\lambda}_2(g_1) + 16\pi\right) \geq 0 \text{ if } \eps \leq \frac{\Lambda_{f_1} - 16\pi}{\bar{\lambda}_2(g_1) + 16\pi}$$
so that we are sure to have a 3-d target ellipsoid also for $t \geq 1-\eps$. We deduce the existence of $0 \leq t_{\star} < 1-\eps$ such that the minimal metric for $\Lambda_{f_t}$ is a round sphere for $t = t_{\star}$ and all minimizers for $\Lambda_{f_t}$ are associated to 3-d ellipsoids for $t>t_{\star}$. 

\subsection{Linear combinations of the inverse of first and second eigenvalues}

We can also consider the example $h_s(x_1,x_2) = \frac{1}{x_1}+ s\frac{1}{x_2}$ for $s>0$. In the case $s \leq 2$, the round metric is the unique minimizer for $g \mapsto h_2(\bar{\lambda}_1(g),\bar{\lambda}_2(g)) = \frac{1}{\bar{\lambda}_1(g)} + s \frac{1}{\bar{\lambda}_2(g)}$. This is a consequence of the classical Hersch inequality \eqref{eqHersch}:
\begin{equation*}
\begin{split} h_s(\bar{\lambda}_1(g),\bar{\lambda}_2(g)) = \frac{1}{\bar{\lambda}_1(g)} + s \frac{1}{\bar{\lambda}_2(g)} = & \frac{s}{2}\left( \left(\frac{2}{s}-1\right)\frac{1}{\bar{\lambda}_1(g)} +  \frac{1}{\bar{\lambda}_1(g)} + \frac{2}{\bar{\lambda}_2(g)} \right) \\
 \geq & \frac{s}{2}\left( \left(\frac{2}{s}-1\right)\frac{1}{8\pi} +  \frac{1}{\bar{\lambda}_1(g)} + \frac{1}{\bar{\lambda}_2(g)} + \frac{1}{\bar{\lambda}_3(g)}\right) \\
 \geq & \frac{s}{2}\left( \left(\frac{2}{s}-1\right)\frac{1}{8\pi} + \frac{3}{8\pi} \right) =  \frac{1}{8\pi}\left(1+s\right) = h_s(8\pi,8\pi) \end{split}
\end{equation*}
holds for any metric $g$ and there is equality if and only if $g$ is the round metric. Let's consider the case $s>2$. From Theorem \ref{theomin}, there is a minimizer for $\Lambda_{h_s}$ for any $s$. In this case, by numeric computations and proposition \eqref{propE3}, there are at most two 2-d critical ellipsoids for $h_s(\bar{\lambda}_1,\bar{\lambda}_2)$: either the sphere or the unique ellipsoid $\mathcal{E}_{p_s,q_s}$ with parameters $(p_s,q_s,q_s)$ such that $s  = \frac{q_s^2}{p_s^2} \tau\left(\frac{q_s}{p_s}\right) $. By numeric computations we have $p_s < q_s$ if $s> 2$ and $p_s > q_s$ if $s<2$ and
\begin{equation} \label{eqcomparisonsphereellipsoid} h_s(A_{p_s,q_s} p_s , A_{p_s,q_s}q_s) = \frac{1}{p_s A_{p_s,q_s}}+ s \frac{1}{q_s A_{p_s,q_s}} \leq \frac{1}{8\pi} (1+t) = h_s(8\pi,8\pi) \end{equation}
with equality if and only if $s=2$. Then the minimizer cannot be a round sphere if $q_s$ is a second eigenvalue of $(\mathcal{E}_{p_s,q_s} , g_{p_s,q_s})$. It is clear that if $s$ is close to $2$, then $(p_s,q_s,q_s)$ is close to $(1,1,1)$, and $p_s$ and $q_s$ are first and second eigenvalue of $(\mathcal{E}_{p_s,q_s} , g_{p_s,q_s})$ by continuity of eigenvalues. However, by numeric computations, we observe that $x \mapsto x^2\tau(x)$ is an increasing function, and we obtain some number $s_1 \sim 7,0757$ such that, condition \eqref{eqcondderivative} is not realized for $s\in [s_1,+\infty[$. Indeed, for $s\geq s_1$, $q_s$ cannot be a second eigenvalue. Then there is $2 \leq s_{\star} \leq s_1$ such that for any $s\in [2,s_{\star}] $, $q$ is a second eigenvalue and for $s\in ]s_{\star},s_{\star}+\eps]$, $q$ is not a second eigenvalue anymore. We deduce that there is some $s \in [s_{\star},s_{\star}+\eps]$ having minimizers for $\Lambda_{g_s}$ correspond to minimal embeddings into $3$-d ellipsoids. Indeed, if not, by inequality \eqref{eqcomparisonsphereellipsoid} which is strict for $s_{\star} \neq 2$, the critical ellipsoid of parameters $(p_{s_\star},q_{s_{\star}},q_{s_{\star}})$ is a minimizer: $\Lambda_{h_{s_{\star}}} = h_{s_{\star}}(\bar{\lambda}_{1,\star},\bar{\lambda}_{2,\star})$ where $\bar{\lambda}_{1,\star} = p_{s_{\star}} A_{p_{s_{\star}},q_{s_{\star}}}$ and $\bar{\lambda}_{2,\star} = q_{s_{\star}} A_{p_{s_{\star}},q_{s_{\star}}})$, and for $s\in]s_{\star},s_{\star}+\eps[$, we have $\Lambda_{h_{s}} = h_s(8\pi,8\pi)$. We obtain a contradiction
\begin{equation*} 
\begin{split} 
\frac{1}{8\pi}(1+s) = h_s(8\pi,8\pi) = & \Lambda_{h_s} \leq  h_s( \bar{\lambda}_{1,\star},\bar{\lambda}_{2,\star}) \\
\leq & h_{s_{\star}}(\bar{\lambda}_{1,\star},\bar{\lambda}_{2,\star}) + \frac{s-s_\star}{\bar{\lambda}_{2,\star}} \\
\leq & \Lambda_{h_{s_{\star}}} - h_{s_{\star}}(8\pi, 8\pi) + \frac{1}{8\pi}(1+s_{\star}) + \frac{s-s_\star}{\bar{\lambda}_{2,\star}} \\
\leq &  \frac{1}{8\pi}(1+s) + \left(  \Lambda_{h_{s_{\star}}} - h_{s_{\star}}(8\pi, 8\pi) \right) + (s-s_{\star}) \left( \frac{1}{\bar{\lambda}_{2,\star}} - \frac{1}{8\pi}   \right),
\end{split}
\end{equation*}
choosing $s-s_{\star}$ small enough such that $ (s-s_{\star}) \left( \frac{1}{\bar{\lambda}_{2,\star}} - \frac{1}{8\pi}   \right) \leq  \frac{1}{2}\left( h_{s_{\star}}(8\pi, 8\pi) - \Lambda_{h_{s_{\star}}}  \right) $ which is possible since the right-hand term is positive by inequality \eqref{eqcomparisonsphereellipsoid} which is strict for $s_{\star} \neq 2$.

\subsection{Optimization of combinations of eigenvalues under symmetries}
In the current section we briefly explain that embedded immersions in Theorem \ref{theomain}, are obtained by an adaptation of the previous variational methods on combinations of eigenvalues to conformal factors on the sphere with \textit{symmetries}. We need to do that because we only have embeddedness under symmetry assumptions (see Theorem \ref{theoembedded}). To that purpose, we can follow the new point of view given in \cite{Pet22}: we build a sequence of \textit{symmetric} Palais-Smale sequences of conformal factors in the admissible space of  rotationally symmetric with respect to the third axe and symmetric with respect to $\{x_3=0\}$ conformal factors. This is left to the reader, noticing that more details are written in the analogous Steklov case (\cite{Pet23}). Bubble tree convergence of Palais-Smale sequences is of course still obtained (see \cite{Pet22}), and all the previous needed gap assumptions are still valid for $f_t$ and $h_s$ because the test functions given in Theorem \ref{theoasymptotic} are in the admissible space of conformal factors.

The remaining convergence as varifolds in Theorem \ref{theoembedded} of the obtained critical embeddings when $t\to +\infty$ or $s\to +\infty$ is again a consequence of the analysis given for the bubble tree convergence of Palais-Smale sequences in \cite{Pet22}. Again, more details are given in the analogous Steklov case \cite{Pet23}.

\section{Critical metrics for combinations of Laplace eigenvalues and minimal surfaces into ellipsoids} \label{sectioncriticalmetrics}

\subsection{Minimal immersions into ellipsoids} \label{minimmellispoid}
In this part, we explain the link between minimal immersions into ellipsoids and the Laplace eigenvalue of the induced metric times some function of the coordinates of the minimal immersion.
Let $\mathcal{E} \subset \mathbb{R}^n$ be an ellipsoid of parameters $\Lambda = diag\left(\lambda_1,\cdots,\lambda_n\right)$, with $\lambda_i >0$, defined by
$$ \mathcal{E} = \{ (x_1,\cdots,x_n) \in \mathcal{E} ; \lambda_1 x_1^2 + \cdots + \lambda_n x_n^2 = 1 \} \hskip.1cm,$$
endowed with the induced metric of the Euclidean metric $\xi$. The outward normal of the ellipsoid is denoted by
$$ \nu = \frac{\Lambda x}{\left\vert \Lambda x \right\vert}$$ 
where
$$\left\vert \Lambda x \right\vert = \left( \sum_{i=1}^n \lambda_i^2 x_i^2\right)^{\frac{1}{2}} \hskip.1cm.$$
We compute the laplacian of $x$ on $\mathcal{E}$:
$$ \Delta_{\mathcal{E}} x = \Delta_{\mathbb{R}^n} x - \nabla^2 x\left( \nu, \nu \right) + H \nu = H \nu \hskip.1cm, $$
where 
$$ H = div\left( \nu \right) = \frac{\sum_{i\neq j} \lambda_i \lambda_j^2 x_j^2}{\left\vert \Lambda x \right\vert^{3}} $$
is the mean curvature of the ellipsoid at $x \in \mathcal{E}$. 

\medskip

Now, let $\Phi : \Sigma \to \mathcal{E}$ be a conformal immersion of a Riemannian compact surface without boundary $(\Sigma,g)$ into $\mathcal{E}$, a $n-1$ dimentional ellipsoid of parameter $\Lambda = \left(\lambda_1,\cdots,\lambda_n\right)$. Then there is a smooth positive function $e^{2u}$, such that $g = e^{2u} h$, where the $h = \phi^{\star}(\xi) $ and we have
$$ \Delta_g f = e^{-2u} \Delta_h f \hskip.1cm, $$
If in addition, the isometric immersion $\Phi : (\Sigma,h) \to \mathcal{E}$ satisfies
$$ \Delta_h \Phi = \frac{H \circ \Phi}{\left\vert\Lambda \Phi \right\vert} \Lambda\Phi \hskip.1cm, $$
which is exactly the assumption of being minimal into $\mathcal{E}$, then setting 
$$g = e^{2u} h  \hspace{1cm} e^{2u} = \frac{H \circ \Phi}{\left\vert \Lambda \Phi \right\vert} = \frac{\left\vert \nabla \Phi \right\vert^2_{\mathcal{E}}}{\left\vert \Lambda \Phi \right\vert^2} \hskip.1cm,$$
the coordinates of $\Phi$ are eigenfunctions on $(\Sigma,g)$ with eigenvalues $\lambda_1,\cdots,\lambda_n$:
$$ \Delta_g \Phi = \Lambda \Phi \hskip.1cm.$$
Another well-known characterisation of an immersion $\Phi : (\Sigma,g) \to \mathbb{R}^n$ to be minimal in $\mathcal{E}$ is harmonicity in $\mathcal{E}$ and conformality. $\Phi$ is harmonic in $\mathcal{E}$ if it is a critical point of the energy
$$ E(\Phi) = \frac{1}{2} \int_{\Sigma}\left\vert \nabla \Phi \right\vert_g^2 dA_g $$
under the constraint $\Phi(\Sigma) \subset \mathcal{E}$. The Euler-Lagrange characterization is
$$ \Delta_g \Phi \in \left(T_{\Phi}\mathcal{E}\right)^{\perp}$$
Then $\Delta_g \Phi = f \nu$ for some function $f$ and simply computing $f$ with $0 = \frac{1}{2} \Delta_g \left\vert \Phi \right\vert_{\mathcal{E}}^2$, we obtain the equation
$$ \Delta_g \Phi = \frac{\left\vert \nabla \Phi \right\vert_\Lambda}{\left\vert \Lambda \Phi \right\vert^2} \Lambda \Phi = \left\langle \nabla \Phi, \nabla \nu \right\rangle \nu \hskip.1cm.$$
Conformality is characterized by the vanishing of 
$$0 = \left\vert\nabla \Phi \right\vert_g^2 \frac{g}{2} - d\Phi \otimes d\Phi := \sum_{i=1}^n \left(\left\vert\nabla \Phi_i \right\vert_g^2 \frac{g}{2} - d\Phi_i \otimes d\Phi_i \right) \hskip.1cm.$$

\subsection{Minimal embeddings of spheres into a 2-dimensional ellipsoid}

Let's consider the simplest case: we assume that $\Phi$ is an isometric embedding of a $2$-sphere (which exists by the uniformization theorem) that is $\Sigma = \mathcal{E}_p$ endowed with $\xi$ of parameters $p = (p_1,p_2,p_3)$. Then
$$ H = p_1p_2p_3 \frac{\frac{1}{p_1}+\frac{1}{p_2} + \frac{1}{p_3} -  x_1^2 - x_2^2 - x_3^2 }{\left( p_1^2 x_1^2 + p_2^2 x_2^2 + p_3^2 x_3^2 \right)^{\frac{3}{2}} } $$
We notice that setting the metric conformal to $\xi$
$$ g_p = e^{2u} \xi \hspace{1cm} e^{2u} = \frac{H}{\left\vert \Lambda x \right\vert}  = p_1 p_2 p_3 \frac{\frac{1}{p_1}+\frac{1}{p_2} + \frac{1}{p_3} -  x_1^2 - x_2^2 - x_3^2 }{\left(p_1^2 x_1^2 + p_2^2 x_2^2 + p_3^2 x_3^2 \right)^{2} } \hskip.1cm,$$
then on $(\mathcal{E}_p,g_p)$ we have 
$$ \Delta_{g_p} x_i = p_i x_i $$
for all $i=1,2,3$. This means that the coordinate functions are eigenfunctions on $(\mathcal{E}_p,g_p)$ with eigenvalues $p_1,p_2,p_3$. However, these are not necessarily eigenfunctions associated to the first, second and third non-zero eigenvalue of $(\mathcal{E},g)$. Indeed, for degenerating ellipsoids, with $p_2 = p_3 =1$, we have that 
$$ A_{g_{p}}(\mathcal{E}_{p}) \sim \frac{2\pi^2}{\sqrt{p_1}} \to +\infty \hbox{ as } p_1 \to 0 \hskip.1cm. $$
Knowing that $\lambda_k(g) A_g(\mathcal{E}_{p_1,p_2,p_3})$ has to be bounded by $8\pi k$ for any $k$, if $\lambda_k(g) = p_2 = p_3 = 1$, then $k\to + \infty$ as $p_1 \to 0$. It would be interesting to know the value of 
$$ k_{i}(p) = \inf \{ k\in \mathbb{N} ; \lambda_k(\mathcal{E}_p,g_{p}) = p_i \} $$
for $i=1,2,3$ and $p= (p_1,p_2,p_3)$ on the Riemannian manifold $(\mathcal{E}_p,g_p)$ defined above. For $p$ such that $p_1 = p_2 = p_3$, $k_i(p)= 1$ since the ellipsoid is a round sphere. The points $p$ such that $k_i$ jumps are points of bifurcation of eigenvalues. What is the cone $\{ p\in \mathbb{R}^3\setminus\{0\} ; \forall i \in\{1,2,3\}, k_i(p) \leq 3 \}$ ?

\subsection{Computations on rotationally symmetric 2-dimensional ellipsoids}
Let $\mathcal{E}_{(p,q)} = \{p x_0^2 +  q x_1^2 + q x_2^2 = 1 \}$ some ellipsoid with $p,q \geq 0$, endowed with the metric
$$ g_{p,q} = e^{2u_{p,q}} \xi \hspace{1cm} e^{2u_{p,q}} = pq^2 \frac{\frac{1}{p}+ \frac{2}{q} -  x_0^2 - x_1^2 - x_2^2 }{\left(p^2 x_0^2 + q^2 x_1^2 + q^2 x_2^2  \right)^{2} } \hskip.1cm.$$
By parametrization on the sphere $(x_0,x_1,x_2) = \varphi(z_0,z_1,z_2) = (\frac{1}{\sqrt{p}}z_0, \frac{1}{\sqrt{q}} z_1 , \frac{1}{\sqrt{q}} z_2)$, we obtain
\begin{equation*} \begin{split} \varphi^\star g_{p,q} = q \frac{2p+ q - \left(  q z_0^2 + p z_1^2 + p z_2^2 \right) }{ \left(p z_0^2 + q z_1^2 + q z_2^2 \right)^{2} } \varphi^\star \xi = \frac{2p+ q - \left( q z_0^2 + p z_1^2 + p z_2^2 \right) }{ \sqrt{p} \left(p z_0^2 + q z_1^2 + q z_2^2 \right)^{\frac{3}{2}} } g_{\mathbb{S}^2} \\ = \frac{p+q - (q-p)z_0^2}{\sqrt{p} \left( q - (q-p) z_0^2 \right)^{\frac{3}{2}}} g_{\mathbb{S}^2} \end{split} \end{equation*}
By stereographic projection with respect to $(1,0,0)$, knowing that $z_0 = \frac{r^2-1}{r^2+1}$
\begin{equation*} \begin{split} \pi_N^\star \varphi^\star g_{p,q} =  \frac{(p+q)(r^2+1)^2 -  (q-p)(r^2-1)^2}{\sqrt{p}(q(r^2+1)^2-(q-p)(r^2-1)^2)^{\frac{3}{2}}} (1+r^2) \frac{4}{(1+r^2)^2} \xi \\
= \frac{p r^4+2q r^2+p}{\sqrt{p}(p r^4+2(2q-p)r^2 +p)^{\frac{3}{2}}} \frac{8}{(1+r^2)} \xi \end{split} \end{equation*}
so that the area of this ellipsoid is
$$ A_{p,q} = 2\pi \int_{0}^{+\infty} \frac{p r^4+2q r^2+p}{\sqrt{p}(p r^4+2(2q-p)r^2 +p)^{\frac{3}{2}}} \frac{8}{(1+r^2)}  rdr   = \frac{1}{p} A_{\frac{q}{p}}$$
where $A_{s}:=A_{1,s}$. Notice that for $p=q$, $A_{p,p}=\frac{8\pi}{p}$. This corresponds to the area of the sphere times the sum of principal curvatures times the radius.

Now, we also have that since $x_1^2 + x_2^2 = \frac{1}{q} \left(z_1^2 +z_2^2\right) = \frac{1}{q}\frac{4r^2}{(1+r^2)^2}$
$$ \int_{\mathcal{E}_p} (x_1^2+x_2^2) dg_p = 2\pi \frac{1}{q}\int_{0}^{+\infty} \frac{4r^2}{(1+r^2)^2}\frac{p r^4+2q r^2+p}{\sqrt{p}(p r^4+2(2q-p)r^2 +p)^{\frac{3}{2}}} \frac{8}{(1+r^2)} rdr = \frac{1}{p q}B_{\frac{q}{q}}$$
where
$$ B_{s} = 2\pi  \int_{0}^{+\infty} \frac{4r^2}{(1+r^2)^2}\frac{ r^4+2s r^2+1}{( r^4+2(2s-1)r^2 +1)^{\frac{3}{2}}} \frac{8}{(1+r^2)} rdr$$
From Theorem \ref{theocriticalembedded}, we have that $\frac{1}{p q}B_{\frac{q}{q}} = \frac{ t A_{p,q} }{p+tq}$, we obtain that $g_{p,q}$ is a critical metric of $g\mapsto F\left(\bar{\lambda}_1(g),\bar{\lambda}_2(g) \right)$ if and only if
$ \frac{\partial_2F(p,q)}{\partial_1F(p,q)} =  \tau\left(\frac{p}{q}\right) $ where
\begin{equation} \label{eqdeftau} \tau(s) = \frac{B_{s}}{A_{s}-B_{s}} \frac{1}{s} \end{equation}
and all the properties used in Section \ref{sectionpresentation} are checked for this function $\tau$ by numerical computations.

\section{Road to embeddedness and proof of Theorem \ref{theoembedded}} \label{sectionembeddedsphere}
Let $\Phi : \mathbb{S}^2 \to \mathcal{ E } $ be some (possibly branched) minimal immersion of a sphere into an ellipsoid. We know that $\Phi$ is conformal and that the coodinates of $\Phi$ are eigenfunctions on the sphere endowed with the (smooth, possibly with conical singularities) metric 
\begin{equation} \label{defgphi} g_{\Phi} = \frac{\left\langle \Lambda \nabla \Phi , \nabla \Phi \right\rangle_{g_0}}{\left\vert \Lambda \Phi \right\vert^2} g_{0} \end{equation}
where $g_0$ is the standard round metric and $\Lambda = diag\left(\lambda_1,\cdots,\lambda_m\right)$ is the parameter of the ellipsoid:
$$ \mathcal{ E } =\left\{ (x_1,\cdots,x_m) \in \mathbb{R}^m ; \left\langle \Lambda x , x \right\rangle := \lambda_1 x_1^2 +\cdots + \lambda_m x_m^2 = 1 \right\} . $$
Notice that there is no reason for $ g_{\Phi} $ to be the pull-back of the Euclidean metric by $\Phi$ except if $\Lambda= \lambda I_m$. We assume that the coordinates of $\Phi$ are first or second eigenfunctions of the Laplacian with respect to $g_{\Phi}$. By rearrangement of coordinates, we can assume that all the coordinates are independent functions in $L^2(\mathbb{S}^2,g_{\Phi})$. We aim at proving that $\Phi$ does not have any branch point and that $\Phi$ is embedded.

\subsection{Nodal sets of eigenfunctions}  \label{subsecnodalset} In this short section, we give the main ingredients of the proof. They come from the properties of nodal sets and nodal domains. We recall that the nodal set of an eigenfunction is the set of vanishing points of the function and that the nodal domains are the connected components of the complementary of the nodal set. 

As noticed for instance in \cite{HON}, we know the topology of the nodal sets of first and second eigenfunctions on the sphere. By the celebrated Courant nodal theorem, first eigenfunctions have exactly two nodal domains, and second eigenfunctions have two or three nodal domains. Moreover, the nodal set of eigenvalues is locally diffeomorphic to the zero set of a polynomial of two variables. Therefore the zero set of first eigenfunctions has to be an embedded circle. The zero set of second eigenfunctions has to be 
\begin{itemize}
\item either an embedded smooth circle, 
\item or a disjoint union of two embedded smooth circles,
\item or an $\infty$-shaped curve (a union of two embedded circles having a unique intersection at a singularity point and smooth outside this singularity point).
\end{itemize}

In particular, one simple but useful remark is that any continuous map $\alpha : [0,1] \to E_2$, where $E_2$ the vector space of second eigenfunctions, such that $\alpha(0) = - \alpha(1)$ must satisfy that there is $t \in [0,1]$ such that the nodal set of $\alpha(t)$ is an embedded smooth circle. Indeed, if not, this means that every eigenfunction in $\alpha([0,1])$ has three nodal domains. On the sphere, two of them are simply connected while the third one is not simply connected. By connectedness reasons, for any $t \in [0,1]$, the two simply connected nodal sets of $\alpha(t)$ have the same constant sign. By assumption this is not the case for $\alpha(0)$ and $\alpha(1) = -\alpha(0)$ and we obtain a contradiction.

By \cite{HON}, we also deduce that the multiplicity of the first eigenvalue is bounded by $3$ and the multiplicity of the second one is also bounded by $3$. Therefore, $\Phi$ has at most $4$ coordinates. Since $\Phi$ is a (possibly branched) conformal immersion, it also has at least $3$ coordinates.

Moreover, for any eigenfunction $\psi$, we have by the Hopf lemma that for any $x\in \psi^{-1}(0)$ 
\begin{equation} \label{hopflemmanodal} x \text{ is a smooth point of } \psi^{-1}(0) \Leftrightarrow \nabla \psi(x) \neq 0. \end{equation}

\subsection{Case of 2 dimensional target ellipsoids} We assume that $\Phi$ has $3$ coordinates. Since $\Phi$ is a (possibly branched) conformal harmonic immersion then $\Phi : \mathbb{S}^2 \to \mathcal{E}_2$ is a branched cover. If one of the coordinates is a first eigenfunction then the nodal set is a smooth embedded circle. If all the coordinates are second eigenfunctions, then up to a rotation of the (rotationally symmetric) ellipsoid, one of the coordinates must be an eigenfunction such that the nodal set is a smooth embedded circle (see Section \eqref{subsecnodalset}). This means that the degree of the branched cover is one. Then $\Phi$ is an embedding.

\subsection{Case of 3 dimensional target ellipsoids} Notice that the current subsection can be seen as an independent section, except that we prove in Step 1 that any (possibly branched) minimal immersion by first and second eigenfunctions do not have branched points. We give, natural assumptions (see Step 9) in order to prove that they are embedded. These assumptions are implied by the symmetric assumptions of Theorem \ref{theoembedded}. This section gives a general picture and justifies why we add these symmetries in the next subsection. 

We assume that $\Phi$ has $4$ coordinates. We denote $\Phi = (\phi_0,\phi_1,\phi_2,\phi_3)$ the coordinates of $\Phi$ and we assume that $\phi_0$ is a first eigenfunction with respect to $g_{\Phi}$. Then $\phi_1,\phi_2,\phi_3$ are second eigenfunctions. We have by definition of $\Phi:= (\phi_0,\eta)$ that
\begin{equation}\label{mapintoellipsoid} \lambda_1 \phi_0^2 + \lambda_2 \left\vert \eta \right\vert^2 = 1
\end{equation}
The strategy is to prove that the restriction of $\eta := (\phi_1,\phi_2,\phi_3) : \mathbb{S}^2 \to \R^3$ is an injective map on the nodal set of $\phi_0$ and on the two nodal domains of $\phi_0$. For that, we focus on the structure of nodal set $N_v$ of second eigenfunctions $z \mapsto \left\langle \eta(z), v \right\rangle$ for $v \in \mathbb{S}^2$. We define
$$ A_1 = \{ v \in \mathbb{S}^2 ;  N_v \text{ is an embedded circle } \} $$
$$ A_2 = \{ v \in \mathbb{S}^2 ;  N_v \text{ is the disjoint union of two embedded circles } \} $$
$$ B = \{ v \in \mathbb{S}^2 ; N_v \text{ is an $\infty$-shaped curve } \} $$
From section \ref{subsecnodalset}, we know that $\mathbb{S}^2$ is the disjoint union of $A_1$, $A_2$ and $B$, that $A_1$ and $A_2$ are open sets while $B$ is a closed set and that $A_1 \neq \emptyset$. More generally, two antipodal points of $\mathbb{S}^2$ cannot be in the same connected component of $A_2 \cup B$.

We also define
$$ C = \{ z \in \mathbb{S}^2 ; \text{ there is } v \in B, z \text{ is the unique singular point of } N_v \}. $$
In fact, we can characterize $B$ and $C$ thanks to \eqref{hopflemmanodal}: for $z \in \mathbb{S}^2$, we have in any conformal chart that
\begin{equation} \label{eqhopflemmadet} z \in C \Leftrightarrow \det\left( \eta(z) , \eta_x(z), \eta_y(z) \right) = 0. \end{equation}
Indeed $z \in C$ iff there is $v \in B$ such that $z$  is the unique singular point of $N_v$ iff there is $v \in B$ such that $\left\langle v,\eta(z) \right\rangle = 0$ and $\left\langle v, \nabla\eta(z) \right\rangle = 0$ iff $\{\eta(z) , \eta_x(z), \eta_y(z)\}^{\perp} \neq \{0\}$. We also deduce that
\begin{equation} \label{eqcharBwithC} B = \mathbb{S}^2 \cap \bigcup_{z\in \mathbb{S}^2} \{\eta(z) , \eta_x(z), \eta_y(z)\}^{\perp}  = \mathbb{S}^2 \cap  \bigcup_{z\in C} \{\eta(z) , \eta_x(z), \eta_y(z)\}^{\perp}. \end{equation}

Finally, we deduce from Section \ref{subsecnodalset} that for any $z \in C$, then the rank of the familly $\{\eta(z) , \eta_x(z), \eta_y(z)\}$ is equal to $2$. Indeed, if not then $\{\eta(z) , \eta_x(z), \eta_y(z)\}^{\perp}$ contains at least one great circle of the sphere. It would give existence of two antipodal points which are in the same connected component of $B$. This is a contradiction.

\medskip

\textbf{Step 1}: $\nabla \eta$ never vanishes and then $\Phi$ does not have any branched point. This is the immediate consequence of the previous remark. $\nabla \eta(z) = 0$ for some $z\in \mathbb{S}^2$ would imply that the rank of the family $\{\eta(z) , \eta_x(z), \eta_y(z)\}$ is less than $1$.

\medskip

Thanks to Step 1, $\Phi : \mathbb{S}^2 \to \mathcal{E}$ is an immersion and we can define $n : \mathbb{S}^2 \to \mathbb{S}^2$ a unit normal to $\Phi$. This means that in a conformal chart, for any $z \in \mathbb{S}^2$, the family $(\Phi_x(z),\Phi_y(z), n(z) , \Lambda \Phi(z))$ is orthogonal, where $\Lambda = diag(\lambda_1,\lambda_2,\lambda_2,\lambda_2)$. We denote $n = (n_0,\tilde{n})$, where $n_0$ is the first coordinate of $n$. Notice that 
\begin{equation} \det\left( \eta(z) , \eta_x(z), \eta_y(z) \right) = 0 \Leftrightarrow n_0(z) = 0. \end{equation}
giving from \eqref{eqhopflemmadet} a new characterization of $C = n_0^{-1} \left( \{0\} \right)$.
Up to take $-n$ we choose $n$ such that $\det\left( \eta(z) , \eta_x(z), \eta_y(z) \right)$ and $n_0(z)$ have the same sign for any $z\in \mathbb{S}^2$.

\medskip

We set 
$$ C' = \{ z \in \mathbb{S}^2 , \eta(z) \subset Span\left( \eta_x(z) , \eta_y(z) \right) \} .$$
Of course $C' \subset C$. 

\medskip

\textbf{Step 2}: $C$ is the disjoint union of $\left(C\cap \phi_0^{-1}(\{0\})\right)$ and $C'$. 

\medskip

\textbf{Proof of Step 2}: Indeed, we would like to prove that if $z \in C$, then
$$ \phi_0(z) = 0 \Leftrightarrow Span\left( \eta_x(z) , \eta_y(z) \right) \text{ is a line}.$$
If $Span\left( \eta_x(z) , \eta_y(z) \right)$ is a line, then we have that $(\star,0,0,0) \in Span\left( \Phi_x(z) , \Phi_y(z) \right) $. We know that $(\lambda_1 \phi_0(z) , \lambda_2\eta(z)) \in Span\left( \Phi_x(z) , \Phi_y(z) \right)^{\perp}$. This gives that $\phi_0(z)=0$.

\noindent Now, if $Span\left( \eta_x(z) , \eta_y(z) \right)$ is a plane, then since $z\in C$, $\eta(z) \in Span\left( \eta_x(z) , \eta_y(z) \right)$. We can write $\eta(z) = \alpha \eta_x(z) + \beta \eta_x(z)$ for some constants $\alpha,\beta \in \mathbb{R}$. Since $(\Phi_x,\Phi_y, \Lambda\Phi)$ is an orthogonal family, we obtain
$$ 1 - \sigma_1 \phi_0^2(z) = \sigma_2 \left\vert \eta(z) \right\vert^2 = \sigma_2 \left\langle \eta(z) , \alpha \eta_x(z) + \beta \eta_x(z) \right\rangle = - \sigma_2 \phi_0(z) \left( \alpha \phi_{0,x}(z) + \beta \phi_{0,y}(z) \right),$$
so that $\phi_0(z)\neq 0$.

\noindent Notice that $\left(C\cap \phi_0^{-1}(\{0\})\right)$ and $C'$ must be disjoint sets because if not, $z\in C' \cap \phi_0^{-1}(\{0\})$ would satisfy that the rank of the family $\{\eta(z) , \eta_x(z), \eta_y(z)\}$ is less than $1$. This is not possible.

\medskip

\textbf{Step 3}: For any $z\in C$, $n_0$ is sign changing at the neighborhood of $z$. In particular, we have that $\left(C\cap \phi_0^{-1}(\{0\})\right) =  \phi_0^{-1}\left(\{0\}\right)$ or $\left(C\cap \phi_0^{-1}(\{0\})\right) =  \emptyset$. Moreover, the set $\eta^{-1}(\{0\})$ is finite and cannot be isolated in $C'$.

\medskip

\textbf{Proof of Step 3}: 

\medskip We notice that $n_0$ satisfies an elliptic equation and we aim at applying the maximum principle. We know that $\Phi$ is a minimal immersion. Then $n$ is harmonic so that 
$$\Delta n \in Span\{ n, \Lambda \Phi \}.$$
We write it as 
$$ \Delta n = \left\vert \nabla n \right\vert^2 n + \beta \Lambda \Phi $$
where $\beta$ is computed thanks to the property $\Delta\left( \left\langle n,\Lambda \Phi \right\rangle\right) = 0$ as
$$ \left\vert \Lambda\Phi \right\vert^2 \beta = 2 \left\langle \nabla n,\nabla \Lambda \Phi \right\rangle - \frac{\left\langle \Lambda n , \Lambda \Phi \right\rangle}{\left\vert \Lambda\Phi \right\vert^2} \left\langle \Lambda \nabla \Phi,\nabla \Phi \right\rangle . $$
We notice that 
$$ \left\langle \Lambda \Phi,\Lambda n \right\rangle = \lambda_1^2 \phi_0 n_0 + \lambda_2^2 \left\langle \tilde{n},\eta \right\rangle = - \lambda_1(\lambda_2-\lambda_1)n_0 \phi_0 $$ 
noticing that $\left\langle n,\Lambda \Phi \right\rangle = 0$ and that
$$ \left\langle \nabla n,\nabla \Lambda \Phi \right\rangle = \lambda_1 \nabla n_0 \nabla \phi_0 + \lambda_2 \left\langle \nabla \tilde{n},\nabla \eta \right\rangle = - \left(\lambda_2-\lambda_1\right)\nabla n_0 \nabla \phi_0 $$
noticing for the last inequality that since $\Phi$ is minimal, we have that the mean curvauture is equal to $0$, which implies that $ \left\langle \nabla n,\nabla \phi \right\rangle = 0 $. We obtain
$$ \Delta n_0 = \left( \left\vert \nabla n \right\vert^2 + \frac{\lambda_1^2(\lambda_2-\lambda_1)\phi_0^2}{\left\vert \Lambda \Phi \right\vert^4} \left\langle \Lambda \nabla \Phi,\nabla \Phi \right\rangle \right) n_0 - \frac{2\lambda_1(\lambda_2-\lambda_1)}{\left\vert \Lambda \Phi \right\vert^2} \phi_0 \nabla n_0 \nabla \phi_0 $$
Notice that 
$$ \frac{2\lambda_1(\lambda_2-\lambda_1)}{\left\vert \Lambda \Phi \right\vert^2} \phi_0  \nabla \phi_0 = \frac{\nabla\left( \lambda_1(\lambda_2-\lambda_1)\phi_0^2 \right)}{\lambda_2 -  \lambda_1(\lambda_2-\lambda_1)\phi_0^2 } = - \nabla\left( \ln \left\vert \Lambda \Phi \right\vert^2 \right) $$
where we used $\lambda_1 \phi_0^2 + \lambda_2 \left\vert \eta \right\vert^2 = 1$ to compute $\left\vert \Lambda \Phi \right\vert^2 = \lambda_2 -  \lambda_1(\lambda_2-\lambda_1)\phi_0^2 $. We obtain the equation
\begin{equation} \label{eqellipticn0} - div\left( \left\vert \Lambda \Phi \right\vert^{2} \nabla n_0 \right) = \left(\left\vert \Lambda \Phi \right\vert^2 \left\vert \nabla n \right\vert^2 + \frac{\lambda_1^2(\lambda_2-\lambda_1)\phi_0^2}{\left\vert \Lambda \Phi \right\vert^2} \left\langle \Lambda \nabla \Phi,\nabla \Phi \right\rangle \right) n_0 \end{equation}
and $z \in C$ is never a local maximum or minimum point of $n_0$.

To continue the proof of step 3, since $\phi_0^{-1}(\{0\})$ is a smooth embedded closed curve ($\phi_0$ is a first eigenfunction) and by Step 2, we deduce that $\mathbb{S}^2 \setminus C\cap \phi_0^{-1}(\{0\})$ is connected if and only if $C\cap \phi_0^{-1}(\{0\}) = \emptyset$ and that $\mathbb{S}^2 \setminus C\cap \phi_0^{-1}(\{0\})$ is disconnected if and only if $C\cap \phi_0^{-1}(\{0\}) = \phi^{-1}\left(0\right)$.

To complete the proof of step 2, we have that if $\eta^{-1}\left(\{0\}\right)$ is isolated in $C$, then it contains a closed curve. However, we must have that $\eta^{-1}\left(\{0\}\right)$ is finite. Indeed, if $\eta^{-1}\left(\{0\}\right)$ is infinite, by compactness of $\eta^{-1}\left(\{0\}\right)$, we can find an accumulation point $z_{\infty}\in \mathbb{S}^2$ such that $z_k \to z_{\infty}$ as $k\to +\infty$ and  $z_k \in \eta^{-1}\left(\{0\}\right)$. Up to a subsequence, there is $\tau \in T_{z_{\infty}}\mathbb{S}^2$ such that $\tau = \lim_{k\to +\infty} \frac{z_{\infty}-z_k}{\left\vert z_{\infty}-z_k \right\vert}$ and we get that $D\eta(z_{\infty})(\tau) = 0$. We obtain a contradiction because the set $\mathbb{S}^2 \cap \{ \eta_x(z_{\infty}), \eta_y(z_{\infty}),\eta(z_{\infty}) \} \subset B$ is a circle, giving existence of two antipodal points in the same connected component of $B$: a contradiction.

\medskip

\textbf{Step 4}: If $C = \emptyset$, then $\frac{\eta}{\left\vert \eta \right\vert} : \mathbb{S}^2 \to \mathbb{S}^2$ is a diffeomorphism and $\Phi$ is embedded.

\medskip

\textbf{Proof of Step 4}: Notice that 
$$ w := \left( \eta_x - \left\langle \frac{\eta}{\left\vert \eta \right\vert}, \eta_x \right\rangle  \frac{\eta}{\left\vert \eta \right\vert} \right)\wedge \left( \eta_y - \left\langle \frac{\eta}{\left\vert \eta \right\vert}, \eta_y \right\rangle  \frac{\eta}{\left\vert \eta \right\vert} \right) = \left\vert \eta \right\vert^2 \left( \frac{\eta}{\left\vert \eta \right\vert} \right)_x \wedge \left(  \frac{\eta}{\left\vert \eta \right\vert} \right)_y  $$
satisfies
$$ w = \eta_x \wedge \eta_y + \frac{\eta}{\left\vert \eta \right\vert^2} \wedge \left( \left\langle \eta,\eta_y \right\rangle \eta_x - \left\langle \eta,\eta_x \right\rangle \eta_y  \right) $$
so that
$$ \left\langle \eta,w \right\rangle = \left\langle \eta,\eta_x\wedge \eta_y \right\rangle $$
never vanishes if $C=\emptyset$. Therefore, $\frac{\eta}{\left\vert \eta \right\vert} : \mathbb{S}^2 \to \mathbb{S}^2$ is an immersion and it is a covering map since $\mathbb{S}^2$ is simply connected. Therefore it is a diffeomorphism. In particular, it is injective. This proves that $\Phi$ is injective and that $\Phi$ is an embedding. Notice that in this case, all the eigenfunctions have two nodal domains.

\medskip

\textbf{Step 5}: If $\phi_0^{-1}\left({\{0\}}\right) \subset C  $, then $\frac{\eta}{\left\vert \eta \right\vert} : \phi_0^{-1}\left({\{0\}}\right) \to \mathbb{S}^2 $ is an embedded curve, moreover, it is a convex curve in $\mathbb{S}^2$. 

\medskip

\textbf{Proof of Step 5}: 

\medskip

\textbf{Step 5.1}: $\frac{\eta}{\left\vert \eta \right\vert} : \phi_0^{-1}\left({\{0\}}\right) \to \mathbb{S}^2 $ is an immersed curve.

\medskip

\textbf{Proof of Step 5.1}: Let $z(t)$ with $t\in \mathbb{S}^1$ be an arc length parametrization of $\phi_0^{-1}(\{0\})$. Notice that $\left\vert \eta(z(t)) \right\vert = \lambda_2^{-1}$ for any $t$. We set $p(t) = \eta(z(t))$ and we compute in a conformal chart
$$ p'(t) = z_1'(t) \eta_x(z(t)) + z_2'(t) \eta_y(z(t)). $$
We know that $\phi_0(z(t)) = 0$ so that 
$$ z_1'(t) \phi_{0,x}(z(t)) + z_2'(t) \phi_{0,y}(z(t)) = 0. $$
and we deduce that up to take the parametrization $-z(t)$,
\begin{equation} \label{eqzprime} z_1'(t) = - \frac{\phi_{0,y}(z(t))}{\left\vert\nabla\phi_0\right\vert(z(t))} \text{ and } z_2'(t) =  \frac{\phi_{0,x}(z(t))}{\left\vert\nabla\phi_0\right\vert(z(t))} \end{equation}
where we know that $\left\vert\nabla\phi_0\right\vert(z(t)) \neq 0$ since $\phi_0(z(t)) =0$ and $\phi_0$ is a first eigenfunction.

Now, knowing that we have a conformal parametrization: 
$$ \left\vert \eta_y \right\vert^2 -  \left\vert \eta_x \right\vert^2 = \phi_{0,x}^2 - \phi_{0,y}^2 \text{ and } \left\langle \eta_x,\eta_y \right\rangle = -  \phi_{0,x} \phi_{0,y}$$
and that $\eta_x$ and $\eta_y$ are colinear vectors on $\phi_0^{-1}(\{0\}) \subset C$, we obtain that
$$ \eta_x = -\phi_{0,y} . \tau \text{ and } \eta_y  = \phi_{0,x} . \tau$$
for some vector $\tau$ such that $\left\vert \tau \right\vert = 1$. We obtain that
$$ p'(t) = \left\vert \nabla \phi_0 \right\vert(z(t)) . \tau(t) \neq 0 $$
so that $p$ is an immersion.

\medskip

\textbf{Step 5.2}: $\frac{\eta}{\left\vert \eta \right\vert} : \phi_0^{-1}\left({\{0\}}\right) \to \mathbb{S}^2 $ is a locally convex curve.

\medskip

\textbf{Proof of Step 5.2}: Again, let $z(t)$ with $t\in \mathbb{S}^1$ be an arc length parametrization of $\phi_0^{-1}(\{0\})$ and $p(t) = \eta(z(t))$. Without loss of generality, we assume that $\left\vert p(t) \right\vert = 1$. We set 
$$v(t) = p(t)\wedge \frac{p'(t)}{\left\vert p'(t) \right\vert}. $$
We aim at proving that the geodesic curvature $ \left\langle \left(\frac{p'(t)}{\left\vert p'(t)\right\vert}\right)', v(t) \right\rangle $ never vanishes. This is equivalent to prove that $ \left\langle p''(t), v(t) \right\rangle $ never vanishes. Since $\Phi$ is a minimal surface, we have the equations
\begin{equation} \label{eqsecondderivative}
\begin{cases} \Phi_{x,x} = u_x \Phi_x - u_y \Phi_y + e n + \frac{\left\vert \Phi_x \right\vert^2_\Lambda}{\left\vert \Lambda \Phi \right\vert^2} \Lambda\Phi \\
 \Phi_{y,y} = -u_x \Phi_x + u_y \Phi_y + g n + \frac{\left\vert \Phi_y \right\vert^2_\Lambda}{\left\vert \Lambda \Phi \right\vert^2} \Lambda\Phi \\
  \Phi_{x,y} = u_y \Phi_x + u_x \Phi_y + f n + \frac{\left\langle \Phi_x, \Phi_y \right\rangle_\Lambda}{\left\vert \Lambda \Phi \right\vert^2} \Lambda\Phi \\
\end{cases}
\end{equation}
where $e^{2u} = \left\vert \Phi_x \right\vert^2 = \left\vert \Phi_y \right\vert^2$ is the conformal factor and $e,f,g$ are the coefficients of the second fundamental form of the immersion into the ellipsoid. Since the immersion is conformal and minimal, we have $e+g = 0$.
In particular,
$$ \eta_{x,x} - e \tilde n \in Span(\eta,\eta_x,\eta_y) \text{ and } \eta_{y,y} + e \tilde n \in Span(\eta,\eta_x,\eta_y) \text{ and } \eta_{x,y} - f \tilde n \in Span(\eta,\eta_x,\eta_y) $$
Now, notice that
\begin{equation*} \begin{split} p''(t) - & \left( (z_1'(t))^2 \eta_{x,x}(z(t)) + 2z_1'(t)z_2'(t) \eta_{x,y}(z(t)) + (z_2'(t))^2 \eta_{y,y}(z(t)) \right) \\ & \in Span\left( \eta(z(t)), \eta_x(z(t)),\eta_y(z(t))\right) = \{v(t)\}^{\perp} = Span\left( p(t), p'(t)\right) \end{split} \end{equation*}
and that $\tilde{n}(t) = \pm v(t)$ and \eqref{eqzprime} to deduce that
$$  \left\langle p''(t), v(t) \right\rangle = \pm \frac{1}{\left\vert \nabla \phi_0 \right\vert^2} \left(e\left( \phi_{0,y}^2 - \phi_{0,x}^2\right) - 2 f \phi_{0,x}\phi_{0,y}  \right).$$
Let's prove that this quantity never vanishes. Up to a rotation of the conformal parametrization, we assume that for a given $t$, $z'(t) = (1,0)$. This means that $\phi_{0,x}(z(t)) = \left\vert \nabla \phi_0 \right\vert(z(t)) $ and that $\phi_{0,y}(z(t)) = 0 $ (the vertical axis is tangent to $\phi_0^{-1}\left(\{0\}\right)$ at $z(t)$ ) so that
$$  \left\langle p''(t), v(t) \right\rangle = \mp e(t).$$
Since we know by assumption that $C = n_0^{-1}\left(\{0\}\right) = \phi_0^{-1}\left(\{0\}\right)$ it is clear that $n_{0,y}(z(t)) = 0$ and that $n_{0,x}(z(t))$ is the normal derivative of $n_0$ along the curve $C$ at $t$. Since $n_0$ satisfies the elliptic equation \eqref{eqellipticn0}, we know that $n_{0,x}(z(t)) \neq 0$. Now, we have that
$$ n_x = -e^{2u} e \Phi_x  -e^{2u} f \Phi_y + \left\langle n_x, \Phi \right\rangle \Lambda \Phi$$
and we deduce $ n_{0,x}(z(t)) = -e^{2u(z(t))} e(z(t)) \phi_{0,x}(z(t)) $ so that
$$  \left\langle p''(t), v(t) \right\rangle = \pm \frac{n_{0,x}(z(t))}{\phi_{0,x}(z(t))} e^{2u(z(t))} =\pm \frac{\left\vert \nabla n_0 \right\vert(z(t))}{\left\vert \nabla \phi_0 \right\vert(z(t))} \frac{\left\vert \nabla \Phi \right\vert^2(z(t))}{2} \neq 0. $$

\medskip

\textbf{Step 5.3}: $\frac{\eta}{\left\vert \eta \right\vert} : \phi_0^{-1}\left({\{0\}}\right) \to \mathbb{S}^2 $ is a globally convex curve. In particular, it is a simple curve.

\medskip

\textbf{Proof of Step 5.3}: We recall that $\{-v(t), v(t) \} \in B$ is the unique pair such that the nodal set of the function $z \mapsto \left\langle \eta(z), \pm v(t) \right\rangle$ is an $\infty$-shaped curve. Moreover, the unique double point of this curve is $z(t)$. In particular, the curve $t\mapsto v(t)$ is simple. 

Now, we deduce from Step 5.2 that $v$ is an immersion and that $v$ is locally convex. Indeed, we have that $(p,\frac{p'}{\left\vert p' \right\vert},v)$ is a Frenet frame of $p$, that is
$$\left\vert p \right\vert^2 = \left\vert v \right\vert^2 = 1 \text{ and } \left\langle p,v \right\rangle =  \left\langle p',v \right\rangle = 0  $$
We deduce that $ \left\langle p,v' \right\rangle = 0 $ so that $v'$ and $p'$ are colinear. We also have that 
$$ \left\langle p',v' \right\rangle = - \left\langle v'',p \right\rangle = - \left\langle p,v'' \right\rangle . $$
We deduce from the first equation that $v'$ never vanishes so that $v$ is an immersion and then that $\left\langle v'', v\wedge v' \right\rangle $ never vanishes so that $v$ is locally convex.

By a result by Little \cite{little}, we know that a simple locally convex curve on $\mathbb{S}^2$ has to be a globally convex curve. Then $v$ is a globally convex curve. This means that $v$ satisfies (up to take $-p$) for any $s \neq t $, $\left\langle v(s),p(t) \right\rangle > 0$ (and of course, for any $t$, $\left\langle v(t),p(t) \right\rangle = 0$). Now we also deduce that $p$ is a globally convex curve and that in particular, it is a simple curve.

\medskip 

\textbf{Step 6}: If $C = \phi_0^{-1}\left({\{0\}}\right)$, then $\Phi$ is embedded.

\medskip

\textbf{Proof of Step 6}: The maps $\frac{\eta}{\left\vert \eta \right\vert} : \phi_0^{-1}\left(\mathbb{R}^*_+\right) \to \mathbb{S}^2$ and $\frac{\eta}{\left\vert \eta \right\vert} : \phi_0^{-1}\left(\mathbb{R}^*_-\right) \to \mathbb{S}^2$ are immersions (on $\mathbb{S}^2\setminus C$, by definition of $C$, see the proof of Step 4) such that $\phi_0^{-1}\left(\mathbb{R}^*_+\right)$ and $\phi_0^{-1}\left(\mathbb{R}^*_-\right)$ have the topology of a disk and by Step 5 are parametrizations of a closed simple curve at the boundary. Therefore, these maps have to be embeddings. In particular, they are injective. Now, since $\Phi = (\phi_0,\eta)$, we deduce that $\Phi$ is injective. Therefore, $\Phi$ is an embedding.

\medskip 

\textbf{Step 7}: If $\eta^{-1}(\{0\}) = \emptyset$, then $C$ is a disjoint union of closed curves.

\medskip

\textbf{Proof of Step 7}: 

\medskip

\textbf{Step 7.1} We set the map defined in a conformal chart by 
$$\psi(z) = \det\left( \eta(z) , \eta_x(z), \eta_y(z) \right) = \left\langle \eta(z),\eta_x(z)\wedge \eta_y(z) \right\rangle .$$
For any $z\in \psi^{-1}(\{0\})$, $\psi$ is a submersion at $z$ if and only if $A(z) \neq 0 $ or $\eta(z)\neq 0$ where $A$ is the second fundamental form of $\Phi$.

\medskip

\textbf{Proof of Step 7.1} Notice that we have \eqref{eqsecondderivative}. Let $z\in \psi^{-1}(\{0\})$. Then 
\begin{equation*} 
\begin{split} \psi_x(z) & = \left\langle \eta(z),\eta_{xx}(z)\wedge \eta_y(z) \right\rangle + \left\langle \eta(z),\eta_x(z)\wedge \eta_{xy}(z) \right\rangle \\
& =  e  \left\langle \eta(z), \tilde{n}(z)\wedge \eta_y(z) \right\rangle + f \left\langle \eta(z),\eta_x(z)\wedge \tilde{n}(z) \right\rangle
\end{split} \end{equation*}
and
\begin{equation*} 
\begin{split} \psi_y(z) & = \left\langle \eta(z),\eta_{xy}(z)\wedge \eta_y(z) \right\rangle + \left\langle \eta(z),\eta_x(z)\wedge \eta_{yy}(z) \right\rangle \\
& =  f  \left\langle \eta(z), \tilde{n}(z)\wedge \eta_y(z) \right\rangle - e \left\langle \eta(z),\eta_x(z)\wedge \tilde{n}(z) \right\rangle
\end{split} \end{equation*}
Therefore, if $ \psi(z) =  \psi_x(z) =  \psi_y(z)  = 0$, we obtain
\begin{equation} \label{eq2cond}e=f=0  \text{ or } \left\langle \eta(z), \tilde{n}(z)\wedge \eta_y(z) \right\rangle = \left\langle \eta(z),\eta_x(z)\wedge \tilde{n}(z) \right\rangle = 0 \end{equation}
which means that
$$ A(z) = 0 \text{ or } \eta(z) = 0 $$
Indeed, since $\psi(z)=0$, either $\phi_0(z) = 0$ and we must have that $\eta(z) \in \{ \tilde{n}(z),\eta_x(z),\eta_y(z) \}^{\perp}$, or $\phi_0(z) \neq 0$ and $\eta(z) \in Span  \{ \eta_x(z),\eta_y(z) \} $. Since $\tilde{n}(z) \neq 0$ and $\nabla \eta(z) \neq 0$, the second condition in \ref{eq2cond} gives that if $A(z)\neq 0$, then $\eta(z) = 0$.

\medskip

\textbf{Step 7.2} If $z\in C$, then $ A(z) \neq 0 $. 

\medskip

\textbf{Proof of Step 7.2} Let $z_0 \in C$. We look at the behaviour of $\eta$ at the neighbourhood of $z_0$. We have that
\begin{equation*} 
\begin{split}
 \eta(z_0 + (x,y)) = & \eta(z_0) + x \eta_x(z_0) + y \eta_y(z_0) \\
& + \frac{1}{2}\left( x^2 \eta_{xx}(z_0) + 2xy \eta_{xy}(z_0) + y^2 \eta_{yy}(z_0) \right) + o(\left\vert (x,y) \right\vert^2) 
\end{split} 
 \end{equation*}
Now, since $z_0\in C$, we have that $\{ \eta(z_0),\eta_x(z_0),\eta_y(z_0) \}^\perp = \R \tilde{n}(z_0) $ so that  using again \eqref{eqsecondderivative}, we have
$$ \left\langle\eta(z_0 + (x,y)), \tilde{n}(z_0)\right\rangle = \frac{1}{2}\left( e \left(x^2-y^2\right)  + 2 f xy \right) + o(\left\vert (x,y) \right\vert^2)$$
We know that the nodal set of the second eigenfunction $z \mapsto \left\langle \eta(z),\tilde{n}(z_0) \right\rangle$ is an $\infty$-shaped curve and in particular that at the singularity point $z_0$, the nodal set has looks like the nodal set of a harmonic polynomial of degree two. In particular, we must have $A(z_0)\neq 0$.

\medskip

\textbf{Step 8}: We write explicitely the Gauss-Codazzi equation

\medskip

\textbf{Proof of Step 8}: 

We write in complex coordinates the system of equations \eqref{eqsecondderivative}
\begin{equation} \label{eqsecondderivativecomplex}
\begin{split} \Phi_{zz} &  = \frac{1}{4} \left( \Phi_{x,x}-\Phi_{y,y} - 2i \Phi_{x,y} \right) \\
& = 2u_z \Phi_z + \frac{A}{2} n +  \left(\lambda_2-\lambda_1\right)\left(\phi_{0,z}\right)^2 \frac{\Lambda\Phi}{\left\vert \Lambda\Phi \right\vert^2} 
\end{split} \end{equation}
where $A = e-if$ contains all the informations of the second fundamental form of the immersion into the ellipsoid. We also have the equation
$$ \Phi_{z\bar{z}} = - \frac{1}{4}  \frac{\left\vert \nabla \Phi \right\vert^2_\Lambda}{\left\vert \Lambda\Phi \right\vert^2} \Lambda\Phi$$
We deduce that
$$ \left\langle \Phi_{zz\bar{z}} ,n  \right\rangle = \frac{1}{2} A_{\bar{z}} +(\lambda_2-\lambda_1) \left(\phi_{0,z}\right)^2 \frac{ \left\langle \Lambda \Phi_{\bar{z}},n\right\rangle}{\left\vert \Lambda\Phi \right\vert^2} $$
and that
$$ \left\langle \Phi_{z\bar{z}z} ,n  \right\rangle = - \frac{1}{4}  \frac{\left\vert \nabla \Phi \right\vert^2_\Lambda}{\left\vert \Lambda\Phi \right\vert^2} \left\langle\Lambda\Phi_z,n\right\rangle$$
and knowing that $\left\langle\Lambda\Phi_z,n\right\rangle = -(\lambda_2-\lambda_1) \phi_{0,z}n_0$ we deduce
$$ \frac{1}{2} A_{\bar{z}} = \left(\lambda_2-\lambda_1\right) \frac{n_0}{\left\vert \Lambda\Phi \right\vert^2} \left( \frac{1}{4}  \left\vert \nabla \Phi \right\vert^2_\Lambda \phi_{0,z} + \left(\lambda_2-\lambda_1\right) \left(\phi_{0,z}\right)^2 \phi_{0,\bar{z}}  \right) $$
and since $4 \phi_{0,z}\phi_{0,\bar{z}} = \left\vert \nabla \phi_0 \right\vert^2$ and $ \left\vert \nabla \Phi \right\vert^2_\Lambda = 2\lambda_2 e^{2u} - \left(\lambda_2-\lambda_1\right) \left\vert \nabla \phi_0 \right\vert^2 $, denoting by $\omega = \ln\left(\left\vert \Lambda \Phi \right\vert\right)$,
we obtain the following Gauss-Codazzi equation, 
\begin{equation} \label{eqgausscodazzi}
e^{-2u}e^{2\omega} A_{\bar{z}} = \lambda_2 \left(\lambda_2-\lambda_1\right) n_0 \phi_{0,z}.
\end{equation}
We also deduce from \eqref{eqsecondderivativecomplex} an equation on $\phi_0$
$$ \phi_{0,zz} = 2 u_z \phi_{0,z} + \frac{A}{2} n_0 + \left(\lambda_2-\lambda_1\right)\frac{ \lambda_1 \phi_0}{\left\vert \Lambda\Phi\right\vert^2} \left(\phi_{0,z}\right)^2 $$
but knowing that $ e^{2\omega} = \left\vert \Lambda\Phi\right\vert^2 = \lambda_2-\left(\lambda_2-\lambda_1\right)\lambda_1 \phi_0^2 $, we obtain
$$  \phi_{0,zz} = 2 u_z \phi_{0,z} + \frac{A}{2} n_0 - \omega_z \phi_{0,z} $$ so that
\begin{equation} \label{eqonphi0}
\left( e^{-2u} e^{\omega} \phi_{0,z}\right)_z = e^{-2u} e^{\omega} \frac{A}{2} n_0.
\end{equation}
Now, from the equations on $n$
$$ \begin{cases}
n_{x} = - e^{-2u} \left(e  \Phi_x + f \Phi_y\right) - \frac{ \left\langle \Lambda \Phi_x, n \right\rangle }{\left\vert\Lambda\Phi\right\vert^2} \Lambda \Phi  \\
n_{y} = -e^{-2u} \left( f \Phi_x - e \Phi_y\right) - \frac{ \left\langle \Lambda \Phi_y, n \right\rangle }{\left\vert\Lambda\Phi\right\vert^2} \Lambda \Phi
\end{cases} $$
we obtain that
\begin{equation} 
\begin{split}
 n_{0,z} &= -e^{-2u} A \phi_{0,\bar{z}} + \left(\lambda_2-\lambda_1\right) \phi_{0,z} n_0 e^{-2\omega} \lambda_1 \phi_0 \\
 &= -e^{-2u} A \phi_{0,\bar{z}} - \omega_z n_0
\end{split} 
  \end{equation}
we deduce that
\begin{equation}\label{eqonn0} e^{-\omega}\left(e^{\omega} n_{0}\right)_z = -e^{-2u} A \phi_{0,\bar{z}}. \end{equation}
We derive $e^{\omega}\times$\eqref{eqonn0} with respect to $\bar{z}$ and then we use \eqref{eqgausscodazzi} for the first right-hand term and \eqref{eqonphi0} for the second right-hand term. We obtain that
\begin{equation*}
\begin{split}
\left(e^{\omega} n_{0}\right)_{z \bar{z}} & = -  A_{\bar{z}} e^{-2u}e^{\omega} \phi_{0,\bar{z}} - A \left( e^{-2u}e^{\omega} \phi_{0,\bar{z}} \right)_{\bar{z}}  \\
 & = -e^{-\omega} \phi_{0,z}\phi_{0,\bar{z}}\lambda_2 (\lambda_2-\lambda_1) - e^{-2u}e^{\omega} n_0 \frac{\left\vert A \right\vert^2}{2} 
\end{split} 
 \end{equation*}
Therefore at any point, we obtain that 
$$ \Delta\left( e^{\omega}n_0\right) = \left( 2 e^{-2u}\left\vert A \right\vert^{2} + \lambda_2 \left(\lambda_2-\lambda_1\right) e^{-2\omega} \left\vert \nabla\phi_0 \right\vert^2 \right)e^{\omega}n_0$$

\medskip

\textbf{Step 9}: We assume that $n_{0,x} \phi_{0,y} - n_{0,y} \phi_{0,x} = 0$. Then $\eta^{-1}(\{0\}) = \emptyset$. Moreover, by step 7, we can denote $C' = \bigcup_{i =1}^N C_i $ the disjoint union of closed curves of $C' = C \setminus \phi_0^{-1}\left(\{0\}\right)$. Then for any $i\in \{1,\cdots N\}$, the curve $\frac{\eta}{\left\vert \eta \right\vert} : C_i \to \mathbb{S}^2$ is a convex curve. In particular, it is a simple curve.

\medskip

\textbf{Proof of Step 9}: 

\medskip

\textbf{Step 9.2} If $z\in C$, then $ \eta(z) \neq 0 $. 

\medskip

\textbf{Proof of Step 9.1} By Step 3, we know that $\eta^{-1}(\{0\})$ is finite but cannot be isolated in $C =  n_0^{-1} \left( \{ 0\} \right)$. By assumption, $n_{0,x} \phi_{0,y} - n_{0,y} \phi_{0,x} = 0$, so that $\phi_0$ is constant on connected components of $n_0^{-1} \left( \{ 0\} \right)$. By the relation $ \lambda_1 \phi_0^2 + \lambda_2 \left\vert \eta \right\vert^2 =1 $, we must have that $\eta^{-1}(\{0\}) = \emptyset$.

\medskip

\textbf{Step 9.2}: $\frac{\eta}{\left\vert \eta \right\vert} : C_i \to \mathbb{S}^2 $ is an immersed curve.

\medskip

\textbf{Proof of Step 9.2}: Let $z(t)$ with $t\in \mathbb{S}^1$ be an arc length parametrization of $C_i$. Notice that $\left\vert \eta(z(t)) \right\vert =$ is a constant function for any $t$ since the $\phi_0(z(t))$ is a constant function by assumption. We set $p(t) = \eta(z(t))$ and we compute in a conformal chart
$$ p'(t) = z_1'(t) \eta_x(z(t)) + z_2'(t) \eta_y(z(t)). $$
It is clear that $p'(t)\neq 0$ since $\eta_x(z(t)) \wedge \eta_y(z(t)) \neq 0$.

\medskip

\textbf{Step 9.3}: $\frac{\eta}{\left\vert \eta \right\vert} : C_i \to \mathbb{S}^2 $ is a locally convex curve.

\medskip

\textbf{Proof of Step 9.3}: The proof is exactly the same as the proof of Step 5.2, Notice here that by the Hopf lemma on the elliptic equation satisfied by $n_0$ \eqref{eqellipticn0}, $\nabla n_0(z(t)) \neq 0$. Notice also that by the equation \eqref{eqonn0}, at $z(t)$, knowing that $n_0(z(t))=0$ and that $A(z(t))\neq 0$ by Step 7.2, we deduce that $\nabla \phi_0(z(t)) \neq 0$

\medskip

\textbf{Step 9.4}: $\frac{\eta}{\left\vert \eta \right\vert} : C_i  \to \mathbb{S}^2 $ is a globally convex curve. In particular, it is a simple curve.

\medskip

\textbf{Proof of Step 9.4}: The proof is exactly the same as for the proof of Step 5.3.

\bigskip

We leave the following question to the reader:

\medskip

\textbf{Question}: Under the assumptions of Step 9, is $\frac{\eta}{\left\vert \eta \right\vert} : \phi_0^{-1}\left( \mathbb{R}^\star_+ \right) \to \mathbb{} \mathbb{S}^2$ embedded ? Or at least is $\Phi$ embedded ?

\subsection{Proof of Theorem \ref{theoembedded}}
We start with the following consequence of symmetry assumptions on $e^{2v}$ to symmetry properties of coordinates.
\begin{prop}
Let $\Phi = (\phi_0,\phi_1,\phi_2,\phi_3) : \mathbb{S}^2 \to \mathbb{R}^4$ be a (possibly branched) conformal minimal immersion into $\{ (x_0,x_1,x_2,x_3) \in \mathbb{R}^4 ; \lambda_1 x_0^2 + \lambda_2 (x_1^2+x_2^2+x_3^2) = 1 \}$. We set $\eta = (\phi_1,\phi_2,\phi_3)$ and
$$ e^{2v} = \frac{\lambda_1 \left\vert \nabla \phi_0 \right\vert^2 + \lambda_2 \left\vert \nabla \eta \right\vert^2 }{\lambda_1^2 \phi_0^2 + \lambda_2^2 \left\vert \eta \right\vert^2} $$
We assume that $\phi_0$ is a first eigenfunction and $\phi_1,\phi_2,\phi_3$ are independent second eigenfunctions of the Laplace operator with respect to $e^{2v} dA_{\mathbb{S}^2} $ and we assume that
$$\forall x \in \mathbb{S}^2, e^{2v}(x_1,x_2,x_3) = e^{2v}(-x_1,x_2,x_3) = e^{2v}(x_1,-x_2,x_3)= e^{2v}(x_1,x_2,-x_3). $$
Then, up to a rotation in the set of parametrizations $\mathbb{S}^2$ and up to a rotation of the coordinates of $\eta$, we have the following symmetries
$$ \forall x \in \mathbb{S}^2, \phi_0(x) = \phi_0(-x_1,x_2,x_3) = \phi_0(x_1,-x_2,x_3) = - \phi_0(x_1,x_2,-x_3) $$
$$ \forall x \in \mathbb{S}^2, \phi_1(x) = -\phi_1(-x_1,x_2,x_3) = \phi_1(x_1,-x_2,x_3) =  \phi_1(x_1,x_2,-x_3) $$
$$ \forall x \in \mathbb{S}^2, \phi_2(x) = \phi_2(-x_1,x_2,x_3) = -\phi_2(x_1,-x_2,x_3) =  \phi_2(x_1,x_2,-x_3) $$
$$ \forall x \in \mathbb{S}^2, \phi_3(x) = \phi_3(-x_1,x_2,x_3) = \phi_3(x_1,-x_2,x_3) = \phi_3(x_1,x_2,-x_3) $$
Moreover, for $i=0,1,2$, $\phi_i$ has two nodal domains and the nodal set is $\{ x_{i} = 0 \}$ (where we let $x_0 = x_3$) and $\phi_3$ has three nodal domains and does not vanish on $\{x_3 = 0\} \cup \{(0,0,\pm 1)\}$.
\end{prop}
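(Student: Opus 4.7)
The plan is to exploit the group $G=\langle\sigma_1,\sigma_2,\sigma_3\rangle\cong(\mathbb{Z}/2)^3$ of coordinate-hyperplane reflections of $\mathbb{R}^3\supset\mathbb{S}^2$, which by hypothesis acts by isometries on $(\mathbb{S}^2,e^{2v}g_{\mathbb{S}^2})$. Each $\sigma_i^*$ preserves every Laplace eigenspace. From $\lambda_1\phi_0^2+\lambda_2|\eta|^2=1$ one gets $(\phi_0\circ\sigma_i)^2=\phi_0^2$ pointwise, so $\phi_0\circ\sigma_i=\pm\phi_0$ on each nodal domain; matching normal derivatives at the nodal set via Hopf's lemma forces a single global sign, $\phi_0\circ\sigma_i=\epsilon_i^{(0)}\phi_0$. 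On the $3$-dimensional $E_2$ the $\sigma_i^*$ are three commuting orthogonal involutions, hence simultaneously diagonalizable; since the ellipsoid is invariant under any $O(3)$-rotation of the last three coordinates, such a rotation of $(\phi_1,\phi_2,\phi_3)$ can be applied so that $\phi_j\circ\sigma_i=\epsilon_i^{(j)}\phi_j$ for each $i,j$. Write $\chi^{(j)}=(\epsilon_1^{(j)},\epsilon_2^{(j)},\epsilon_3^{(j)})$.

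I first pin down $\chi^{(0)}$ by nodal-domain counting. If $\epsilon_i^{(0)}=-1$, then $\phi_0$ vanishes on the fixed great circle $\{x_i=0\}$; since Courant gives exactly two nodal domains for a first eigenfunction, at most one $\epsilon_i^{(0)}$ can be $-1$ (two intersecting great circles would give $\geq 4$). If all three were $+1$, $\phi_0$ would be $G$-invariant hence antipodally even, and the two nodal domains would be open disks invariant under $-\mathrm{id}$, contradicting Brouwer (any involution of a disk fixes a point, but $-\mathrm{id}$ acts freely on $\mathbb{S}^2$). Hence exactly one $\epsilon_i^{(0)}=-1$, and after permuting coordinate axes (the allowed reparametrization) I normalize $\chi^{(0)}=(+,+,-)$, which makes $\phi_0^{-1}(0)=\{x_3=0\}$ with two nodal domains.

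The same Courant bound forces each $\chi^{(j)}$ for $j\geq 1$ to have at most one minus sign. The main obstacle is to exclude $\chi^{(j)}=(+,+,-)$ for $j\geq 1$: if it occurs, $\sigma_3$-oddness doubles any additional nodal component and forces $\geq 4$ nodal domains, so $\phi_j^{-1}(0)=\{x_3=0\}$ and $\phi_j|_{\{x_3>0\}}$ is a sign-definite Dirichlet eigenfunction of the upper hemisphere with eigenvalue $\lambda_2$; by the strong maximum principle it must be (up to scale) the first Dirichlet eigenfunction, and the same reasoning applied to $\phi_0|_{\{x_3>0\}}$ gives the same role with eigenvalue $\lambda_1$, forcing $\lambda_1=\lambda_2$, a contradiction. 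So $\chi^{(j)}\in\{(+,+,+),(-,+,+),(+,-,+)\}$ for $j\geq 1$; and the same Dirichlet-uniqueness argument shows each of $(-,+,+),(+,-,+)$ appears at most once (two copies would be proportional). Finally, if no $\chi^{(j)}$ took value $-1$ on some $\sigma_i$, then $\sigma_i$ would act trivially on $\Phi$, contradicting the local injectivity of the immersion near a non-fixed point of $\sigma_i$; since $\chi^{(0)}$ is $+1$ on $\sigma_1,\sigma_2$, at least one $(-,+,+)$ and at least one $(+,-,+)$ must appear. These constraints force the multiset $\{\chi^{(1)},\chi^{(2)},\chi^{(3)}\}=\{(-,+,+),(+,-,+),(+,+,+)\}$, and relabelling yields the asserted characters.

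The nodal structure then follows. For $i=1,2$ the $\sigma_i$-odd plus Courant argument gives $\phi_i^{-1}(0)=\{x_i=0\}$ with two nodal domains; the same already holds for $\phi_0$. For $\phi_3$, $G$-invariance together with the antipodal-Brouwer argument rules out two nodal domains, so there are exactly three and the nodal set is either two disjoint smooth circles or an $\infty$-shape; an $\infty$-singularity would be $G$-fixed, impossible since $G$ acts freely on $\mathbb{S}^2$, so the nodal set is $C_1\sqcup C_2$ with $G$ permuting the two components. If some $p\in C_1\cap\{x_3=0\}$ then $p=\sigma_3(p)\in\sigma_3(C_1)=C_2$, contradicting disjointness, so $\phi_3\neq 0$ on $\{x_3=0\}$. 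At $\pm e_3$, invariance under $\sigma_1,\sigma_2$ kills both tangential derivatives so $\nabla\phi_3(\pm e_3)=0$; smooth nodal points require a nonvanishing gradient, so $\pm e_3\notin C_1\cup C_2$ and $\phi_3(\pm e_3)\neq 0$, completing the proof.
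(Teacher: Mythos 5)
The central gap is your claim that ``such a rotation of $(\phi_1,\phi_2,\phi_3)$ can be applied so that $\phi_j\circ\sigma_i=\epsilon_i^{(j)}\phi_j$.'' Simultaneous diagonalizability of the three commuting involutions $\sigma_i^*$ on $E_2$ produces some basis $\eta_1,\eta_2,\eta_3$ of $E_2$, orthogonal with respect to $L^2(e^{2v}dA_{\mathbb{S}^2})$, on which $G$ acts diagonally; it does \emph{not} follow that this basis is an $O(3)$-rotate of the given triple $(\phi_1,\phi_2,\phi_3)$, because only $O(3)$-changes of that triple preserve the ellipsoid, not arbitrary invertible changes of basis of $E_2$. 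Bridging exactly this gap is the non-trivial ``Conclusion'' paragraph of the paper's proof: writing $\phi_j=\sum_k c_{jk}\eta_k$, it exploits the $G$-invariance of $\sum_j\phi_j^2=(1-\lambda_1\phi_0^2)/\lambda_2$ together with the pairwise distinctness of the characters of the $\eta_k$ to kill the cross terms, forcing the Gram matrix $C^TC$ to be diagonal, hence $C\in O(3)$ up to rescalings of the $\eta_k$. As written your argument is also circular: the cross-term cancellation needs those characters to be distinct, but you only establish them \emph{after} assuming the diagonalization.

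There are further inaccuracies. The Brouwer step is stated backward: $-\mathrm{id}$ does not preserve the two nodal disks of a $G$-invariant $\phi_0$ --- Brouwer is precisely what forces it to \emph{swap} them (an $-\mathrm{id}$-invariant disk would have a fixed point), and the contradiction then comes from $\phi_0(-p)=\phi_0(p)$ having opposite sign across the swap. The assertion ``$G$ acts freely on $\mathbb{S}^2$'' is false (each $\sigma_i$ fixes a great circle; only $-\mathrm{id}$ is free, which suffices to exclude the $\infty$-shaped nodal set). Your final step silently assumes $\sigma_3(C_1)=C_2$, whereas the Brouwer argument only shows $-\mathrm{id}(C_1)=C_2$; it is not established that $\sigma_3$, rather than $\sigma_1$ or $\sigma_2$, is the reflection that swaps the two nodal circles. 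Your Dirichlet-uniqueness and branched-immersion arguments are genuinely different from the paper's (which instead uses the sign of $\int\psi_1\psi_2 e^{2v}dA_{\mathbb{S}^2}$ to exclude two eigenfunctions odd under the same $\sigma_j$, and a continuity argument on $\cos\theta\,\eta_1+\sin\theta\,\eta_2$ to bound the number of fully $G$-invariant eigenfunctions). The latter of yours needs to be recast: for a possibly branched immersion, $\Phi\circ\sigma_i=\Phi$ does not contradict ``local injectivity'' directly; it forces $d\Phi$ to vanish along the whole fixed circle $\{x_i=0\}$, which contradicts the isolation of branch points of a conformal map.
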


\begin{proof}
For $j=1,2,3$, we denote $s_j : \mathbb{S}^2 \to \mathbb{S}^2$ the symmetry with respect to the plane $\{ x_j = 0\} $. An eigenfunction $\phi$, satisfying the equation
$$ \Delta_{\mathbb{S}^2} \phi = \lambda e^{2v} \phi $$
satisfies that for $j = 1,2,3$, $\phi \circ s_j$ is also a solution. Therefore, up to replace orthogonal (with respect to $L^2\left(e^{2v} dA_{\mathbb{S}^2} \right)$) families of eigenfunctions by non zero eigenfunctions among
\begin{equation*}
\begin{split} \phi + & \eps_1 (\phi\circ s_1) + \eps_2 (\phi\circ s_2) + \eps_3 (\phi \circ s_3) \\ + & \eps_1\eps_2 (\phi \circ s_1 \circ s_2) + \eps_2 \eps_3  (\phi \circ s_2 \circ s_3) + \eps_3 \eps_1 (\phi \circ s_3 \circ s_1) + \eps_1\eps_2\eps_3 (\phi \circ s_1 \circ s_2 \circ s_3)
\end{split} \end{equation*}
where $\eps_1, \eps_2, \eps_3 \in \{-1,1\}$, we obtain that 
$$\forall j \in \{1,2,3\}, \phi_0 = \eps_0^j (\phi_0 \circ s_j). $$
where $\eps_0^j \in \{-1,1\}$ since $\phi_0$ is a simple first eigenfunction and that there are orthogonal (with respect to $L^2\left(e^{2v} dA_{\mathbb{S}^2} \right)$) second eigenfunctions $\eta_1,\eta_2,\eta_3$ such that
$$ \forall i,j \in \{1,2,3\}, \eta_i = \eps_{i}^j (\eta_i \circ s_j). $$
where $\eps_i^j \in \{-1,1\}$. 

\medskip

\textbf{Step 1:} Let $i = 0,1,2,3 $, the set $\{ j \in \{1,2,3\} ; \eps_i^j = -1 \}$ has at most one integer.

\medskip

\textbf{Proof of step 1:} If one of these sets has at least two integers, then one of the functions $\phi_0$, $\eta_1$, $\eta_2$, $\eta_3$ would be antisymmetric with respect to two symmetries among $s_1,s_2,s_3$. This means that it would have at least 4 nodal domains: it is impossible.

\medskip

\textbf{Step 2:} Let $j= 1,2,3$, the set $\{ i \in \{0,1,2,3 \} ; \eps_i^j = -1 \}$ has at most one integer. 

\medskip

\textbf{Proof of step 2:} If for some $j$ two eigenfunctions among $\phi_0, \eta_1,\eta_2,\eta_3$ are antisymmetric with respect to $s_j$. In particular, they vanish on $\{x_j = 0\}$. Then since they are orthogonal, one of them has to vanish on $\mathbb{S}^2 \setminus \{ x_j = 0 \}$. This means that this eigenfunction has at least two nodal domains that belong to $\{ x_j > 0 \}$ and by symmetry at least two nodal domains in $\{ x_j < 0 \}$. We obtain at least 4 nodal domains: it is impossible.

\medskip 

\textbf{Step 3:} Let $i \in \{0,1,2,3\}$. If the set $A_j = \{ j \in \{1,2,3\} ; \eps_i^j = -1 \}$ is empty, then $i\neq 0$ and $\eta_i$ has exactly 3 nodal domains. Moreover, $ \{ j \in \{1,2,3\} ; A_j = \emptyset \}$ contains at most one integer.

\medskip

\textbf{Proof of step 3:} We prove that if a non constant first or second eigenfunction $\phi$ satisfies $\phi = \phi \circ s_j$ for any $j=1,2,3$, then $\phi$ has 3 nodal domains. First of all, because of the symmetries, $\phi$ has to vanish on $\mathbb{S}^2 \setminus \bigcup_{j=1,2,3} \{ x_j = 0\}$: let $x \in \mathbb{S}^2$ such that $x_j >0$ for any $j=1,2,3$ and $\phi(x) = 0$. Because of the symmetries again, $x$ cannot be a singular point of the nodal set of $\phi$. Let $C$ be the connected component of $ \{ \phi(x) = 0 \} \setminus \bigcup_{j=1,2,3} \{ x_j = 0\} $ containing $x$. If $C$ is a closed curve, then by symmetries, $\phi$ would have at least 9 nodal domains: it is impossible. Then, $C$ has two ends $p_1 , p_2 \in \{ x \in \mathbb{S}^2 ; \forall k , x_k \geq 0 : \exists j, x_j = 0 \} $. If one of the ends $p_1$ belongs to $\{ (1,0,0),(0,1,0),(0,0,1) \}$, then by symmetries, $p_1$ and $-p_1$ are two singular vanishing points: the eigenfunction has at least 4 nodal domains: it is impossible. Then, let $j \in \{1,2,3\}$ be such that $p_1,p_2 \notin \{ x_j = 0\}$. Then, by symmetries, there is a nodal domain $D$ such that $D \subset \{x_j >0 \}$ and by symmetries again $-D \subset \{ x_j < 0 \}$. $\phi$ has at least $3$ nodal domains.

By contradiction, we assume that $ \{ j \in \{1,2,3\} ; A_j = \emptyset \}$ contains at least two integers. Up to a permutation of $\eta_1,\eta_2,\eta_3$, we deduce that $\eta_1 $ and $\eta_2$ have exactly three nodal domains. Now we consider the map $ N : e^{i\theta} \in \mathbb{S}^1 \mapsto $ \textit{ the number of positive nodal domains of $\phi_{\theta}:= \cos\theta \eta_1 + \sin\theta \eta_2$}. Since for any $j=1,2,3$, $\phi_\theta = \phi_\theta \circ s_j$, we deduce that $\forall \theta \in \mathbb{S}^1$ $\phi_\theta$ has 3 nodal domains. Then $N$ has to be a constant function. It contradicts the fact that for any $\theta$, $\{ N(e^{i\theta}), N(-e^{i\theta}) \} = \{1,2\}$.

\medskip

\textbf{Conclusion of Step 1,2,3:} Up to a rotation on the set of parametrisations $\mathbb{S}^2$, we obtain that 
$$ \forall x \in \mathbb{S}^2, \phi_0(x) = \phi_0(-x_1,x_2,x_3) = \phi_0(x_1,-x_2,x_3) = - \phi_0(x_1,x_2,-x_3) $$
$$ \forall x \in \mathbb{S}^2, \eta_1(x) = -\eta_1(-x_1,x_2,x_3) = \eta_1(x_1,-x_2,x_3) =  \eta_1(x_1,x_2,-x_3) $$
$$ \forall x \in \mathbb{S}^2, \eta_2(x) = \eta_2(-x_1,x_2,x_3) = -\eta_2(x_1,-x_2,x_3) =  \eta_2(x_1,x_2,-x_3) $$
$$ \forall x \in \mathbb{S}^2, \eta_3(x) = \eta_3(-x_1,x_2,x_3) = \eta_3(x_1,-x_2,x_3) = \eta_3(x_1,x_2,-x_3) $$
and that $\phi_0$, $\eta_1$, $\eta_2$ have two nodal domains and that $\eta_3$ has $3$ nodal domains. Moreover, as noticed in step $3$, $\eta_3$ cannot vanish in $\{ (\pm 1,0,0); (0,\pm 1,0);(0,0,\pm 1) \} $.

\medskip

\textbf{Conclusion:} It remains to prove that up to a rotation of the coordinates $(\phi_1,\phi_2,\phi_3)$ and up to multiply $\eta_i$ by constants, then $\phi_i = \eta_i$ for any $i\in \{1,2,3\}$.

\medskip

First of all, up to multiply $\eta_3$ by a constant, and up to a rotation, we can assume that $\phi_3 = \eta_3$. Then, we set
$$ \phi_1 = A + \alpha \phi_3 \text{ and } \phi_2 = B + \beta \phi_3 $$
for some functions $A,B \in \left\langle \eta_1,\eta_2 \right\rangle$.  Up to a rotation and up to multiply $\eta_1$ by a constant, we can assume that 
$$ A = \eta_1  \text{ and } B = a\eta_1 + b \eta_2  $$
where we must have $b\neq 0$. Up to multiply $\eta_2$ by a constant, we assume that $b=1$. We have that
\begin{equation*} 
\begin{split}
1 &= \sigma_1 \phi_0^2 + \sigma_2\left(\phi_1^2 + \phi_2^2+ \phi_3^2 \right) \\
 & = \sigma_1 \phi_0^2 +  \sigma_2(1+\alpha^2 + \beta^2) \phi_3^2 + \sigma_2\left(\eta_1^2 + \eta_2^2 \right) + 2\sigma_2 a \eta_1\eta_2 + 2 \sigma_2( \alpha A + \beta B  ) \phi_3 
\end{split}
 \end{equation*}
We deduce that $ a \eta_1\eta_2 + ( \alpha A + \beta B  ) \phi_3$ is an even function with respect to $s_1$ and $s_2$. Then
$$a\eta_1\eta_2 + (\alpha + \beta a) \eta_1\phi_3 + \beta\eta_2\phi_3 
=-a \eta_1 \eta_2 - \left( \alpha + \beta a \right) \eta_1 \phi_3 + \beta \eta_2 \phi_3  $$
and
$$a\eta_1\eta_2 + (\alpha + \beta a) \eta_1\phi_3 + \beta\eta_2\phi_3 
=-a \eta_1 \eta_2 + \left( \alpha + \beta a \right) \eta_1 \phi_3 - \beta \eta_2 \phi_3  $$
By a difference between those equations, we obtain $ \left( \alpha + \beta a \right) \eta_1 \phi_3 = \beta \eta_2 \phi_3$ so that $\beta = 0$ and $\alpha + \beta a = 0$. Then, we have $a = 0$, so that $a = \alpha = \beta= 0$.
\end{proof}

\begin{prop}
Let $\Phi = (\phi_0,\phi_1,\phi_2,\phi_3) : \mathbb{S}^2 \to \mathbb{R}^4$ and let $ e^{2v} : \mathbb{S}^2 \to \mathbb{R}_+$.
We assume that $\phi_0$ is a first eigenfunction and $\phi_1,\phi_2,\phi_3$ are independent second eigenfunctions of the Laplace operator with respect to $e^{2v} dA_{\mathbb{S}^2} $ and we assume that
$$\forall x \in \mathbb{S}^2, e^{2v}(x_1,x_2,x_3) = e^{2v}(x_1,x_2,-x_3) = m(x_3) $$
$$ \forall x \in \mathbb{S}^2, \sigma_1 \phi_0^2 + \sigma_2 \left(\phi_1^2 + \phi_2^2 + \phi_3^2 \right) = 1 + n\left(\frac{(x_1,x_2)}{\left\vert (x_1,x_2) \right\vert}\right) $$
for a even function $m : [-1,1]\to \R$ and a function $n : \mathbb{S}^1 \to \R$ such that$\int_{\mathbb{S}^1} n = 0$. Then, $n = 0$ and up to a rotation of the coordinates of $\eta = (\phi_1,\phi_2,\phi_3)$, we have the following symmetries
$$ \forall x \in \mathbb{S}^2, \phi_0(x) = f(x_3)  $$
$$ \forall x \in \mathbb{S}^2, \phi_1(x) = g(x_3)x_1 $$
$$ \forall x \in \mathbb{S}^2, \phi_2(x) = g(x_3)x_2 $$
$$ \forall x \in \mathbb{S}^2, \phi_3(x) = h(x_3) $$
for an odd function $f:[-1,1]\to \R$ and even functions $g,h : [-1,1]\to \R$. Moreover, for $i=0,1,2$, $\phi_i$ has two nodal domains and the nodal set is $\{ x_{i} = 0 \}$ (where we let $x_0 = x_3$) and $\phi_3$ has three nodal domains and does not vanish on $\{x_3 = 0\} \cup \{(0,0,\pm 1)\}$ and
$\Phi$ is an embedded conformal non planar minimal embedding into $\{x\in \R^4 ; \sigma_1 x_0^2 + \sigma_2(x_1^2+x_2^2+x_3^2) =1  \}$
\end{prop}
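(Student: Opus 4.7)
The main tool is the $SO(2)$-action on $(x_1,x_2)$ (rotation about the $x_3$-axis), which leaves $e^{2v}=m(x_3)$ invariant. I would decompose each eigenspace $E_j$ of the weighted Laplacian $\Delta_{e^{2v}}=m(x_3)^{-1}\Delta_{g_0}$ into $SO(2)$-isotypic summands $V_0\oplus V_1\oplus V_2\oplus\cdots$, where $V_0$ is the subspace of $SO(2)$-invariant functions (depending only on $x_3$) and $V_k$ for $k\geq 1$ is the two-dimensional space spanned by $R(x_3)\cos(k\theta)$ and $R(x_3)\sin(k\theta)$ for one radial profile $R$. The mode-$k$ ODE in $\phi$ (with $x_3=\cos\phi$) carries a potential $k^2/\sin^2\phi$, and monotonicity in $k^2$ combined with the Rayleigh quotient yields strict inequalities $\mu_0^0<\mu_0^1<\mu_0^2<\cdots$ among the first eigenvalues of each mode. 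Together with $\dim E_2=3$ (from independence of $\phi_1,\phi_2,\phi_3$ and the multiplicity-$3$ bound on $\lambda_2$ recalled in Section~\ref{sectionembeddedsphere}), this forces $E_1=V_0$, so $\phi_0=f(x_3)$, and $E_2=V_0\oplus V_1$ with $\dim V_0=1$, $\dim V_1=2$.

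\textbf{Exploiting the quadratic constraint.} Choose a basis $\psi_0=h(x_3)$ of the mode-$0$ part of $E_2$ and $\psi_\pm=R_1(x_3)\cos\theta$, $R_1(x_3)\sin\theta$ of its mode-$1$ part. Writing $\phi_i=a_i\psi_0+b_i\psi_++c_i\psi_-$ for $i=1,2,3$ and expanding $\sigma_1\phi_0^2+\sigma_2\sum_i\phi_i^2$ in Fourier modes in $\theta$, the mode-$1$ coefficient is proportional to $h(x_3)R_1(x_3)\bigl[\langle a,b\rangle\cos\theta+\langle a,c\rangle\sin\theta\bigr]$ and the mode-$2$ coefficient is proportional to $R_1^2(x_3)\bigl[\tfrac12(|b|^2-|c|^2)\cos(2\theta)+\langle b,c\rangle\sin(2\theta)\bigr]$. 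Because $hR_1$ and $R_1^2$ are non-constant functions of $x_3$ while the right-hand side $1+n(\theta)$ is $x_3$-independent, matching forces these coefficients to vanish identically: $\langle a,b\rangle=\langle a,c\rangle=\langle b,c\rangle=0$, $|b|=|c|$, and $n\equiv 0$. The $3\times 3$ matrix $M=[a\,|\,b\,|\,c]$ then factors as $U\cdot\mathrm{diag}(|a|,|b|,|b|)$ for some $U\in O(3)$; applying $U^T$ and permuting to put the $V_0$-component in the last coordinate transforms $\eta=(\phi_1,\phi_2,\phi_3)$, after absorbing constants, into $\phi_3=h(x_3)$, $\phi_1=rR_1(x_3)\cos\theta$, $\phi_2=rR_1(x_3)\sin\theta$. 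Smoothness at the poles forces $R_1(x_3)=\sqrt{1-x_3^2}\,\tilde g(x_3)$ for a smooth $\tilde g$, so setting $g=r\tilde g$ and using $x_1=\sqrt{1-x_3^2}\cos\theta$, $x_2=\sqrt{1-x_3^2}\sin\theta$ gives $\phi_1=g(x_3)x_1$ and $\phi_2=g(x_3)x_2$.

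\textbf{Parities, nodal structure, embeddedness and minimality.} The reflection $x_3\mapsto-x_3$ is a symmetry of $e^{2v}$, so every eigenspace splits into even and odd parts. Sturm-Liouville oscillation theory for the mode-$0$ ODE gives $f$ odd with a single interior node at $x_3=0$ and strictly monotone on $[-1,1]$; nodal counting then forces $h$ to be even with exactly two zeros $\pm a\in(-1,1)$ (three nodal domains for $\phi_3$) and $g$ to be even and nowhere vanishing on $(-1,1)$ (so $\phi_1$ has only the great-circle nodal set $\{x_1=0\}$). Injectivity of $\Phi$ is then immediate: $x_3$ is recovered from $\phi_0$ by strict monotonicity of $f$, and then $(x_1,x_2)$ from $(\phi_1,\phi_2)=g(x_3)(x_1,x_2)$ by non-vanishing of $g$. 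Non-planarity follows from the linear independence of $f,g,h$ on $[-1,1]$. Since $n\equiv 0$ the image lies on the ellipsoid, and combining the eigenfunction equations $\Delta_{e^{2v}}\Phi=\mathrm{diag}(\sigma_1,\sigma_2,\sigma_2,\sigma_2)\Phi$ with the ellipsoid constraint gives, via the Takahashi-type computation of Section~\ref{minimmellispoid}, harmonicity of $\Phi$ into the ellipsoid; conformality is built into the choice of $e^{2v}$. The most delicate point of the whole argument is the identification $E_2=V_0\oplus V_1$ (as opposed to $V_0\oplus V_k$ for some $k\geq 2$), which rests on the across-mode monotonicity of first eigenvalues; after this step, the Fourier matching and nodal counting are routine variants of arguments already carried out in Section~\ref{sectionembeddedsphere}.
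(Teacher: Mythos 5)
Your route (direct $SO(2)$-isotypic decomposition of the eigenspaces, Fourier matching of the quadratic constraint, Sturm–Liouville nodal structure, and then elementary injectivity via the monotone profile $f$) is genuinely different from the paper's. The paper instead feeds the assumptions into the \emph{preceding} proposition to obtain the three reflection parities of the $\phi_i$, then uses circular averaging and the rotation group to obtain the radial profiles, and proves conformality at the end via harmonicity plus the vanishing of the holomorphic Hopf differential on $\mathbb{S}^2$. Your Fourier-matching computation for the coefficients $\langle a,b\rangle$, $\langle a,c\rangle$, $\langle b,c\rangle$, $|b|^2-|c|^2$ and the resulting orthogonal normalization of $\eta$ is sound and is in substance the same coefficient-matching the paper performs, just organized more cleanly; and your injectivity argument (recover $x_3$ from the strictly monotone $f$, then $(x_1,x_2)$ from $g\neq 0$ on $(-1,1)$) is a transparent alternative to the paper's normal-alternation argument.

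However, there are two genuine gaps. First, the claimed strict chain $\mu_0^0<\mu_0^1<\mu_0^2<\cdots$ among the lowest eigenvalues of the Fourier modes is false: for the round metric $\mu_0^0=\mu_0^1=2$ (the first spherical harmonic appears both as the radial $Y_1^0$ and as the mode-$1$ pair $Y_1^{\pm 1}$), so the Rayleigh-quotient monotonicity in $k^2$ does not give strict inequality at $k=0\mapsto 1$. You cannot deduce ``$E_1=V_0$'' this way, and you cannot exclude $E_2=V_0'\oplus V_k$ with $k\geq 2$ by eigenvalue comparison alone. The correct exclusion of modes $k\geq 2$ in $E_2$ is by Courant nodal counting: a nonzero mode-$k$ eigenfunction ($k\geq 2$) vanishes on at least $k$ great circles through the poles, so it has at least $2k\geq 4$ nodal domains, which is impossible for a second eigenfunction. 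Pinning $\phi_0$ to mode $0$ requires a further argument (the paper uses the reflection parities from the preceding proposition together with circular averaging); ``$E_1=V_0$'' by itself is both unproved and stronger than needed.

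Second, conformality is not ``built into the choice of $e^{2v}$''. The hypothesis is only that the $\phi_i$ are eigenfunctions of a weighted Laplacian; nothing a priori says that $d\Phi\otimes d\Phi$ is a multiple of $g_{\mathbb{S}^2}$. The paper derives conformality in two steps: once $n\equiv 0$, the image lies on the ellipsoid, so combining $\Delta_{g_{\mathbb{S}^2}}\Phi=e^{2v}\,\mathrm{diag}(\sigma_1,\sigma_2,\sigma_2,\sigma_2)\Phi$ with $\sigma_1\phi_0^2+\sigma_2|\eta|^2=1$ shows $\Delta\Phi$ is normal to the ellipsoid, i.e.\ $\Phi$ is a harmonic map into the ellipsoid; the Hopf differential $\bigl(|\tilde\Phi_x|^2-|\tilde\Phi_y|^2-2i\langle\tilde\Phi_x,\tilde\Phi_y\rangle\bigr)\,dz^2$ is then holomorphic, hence identically zero on $\mathbb{S}^2$, which is exactly conformality. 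This step must be carried out and cannot be taken for granted.
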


\begin{proof}
By the previous proposition and knowing that the mean value of an eigenfunction $\phi$ on circles $\{ x_3 = c \}$ for $c \in \{-1,1\}$ is also an eigenfunction, $\phi_0$ has to be antisymmetric with respect to the coordinate $x_3$ and radial: there is an odd function $f$ such that for all $x\in \mathbb{S}^2$, $ \phi_0(x) = f(x_3) $. Similarly, by the previous proposition, up to a rotation of $(\phi_1,\phi_2,\phi_3)$, there is an even function $h$ such that for all $x\in \mathbb{S}^2$, $ \phi_3(x) = h(x_3) $. 

By the previous proposition, there are orthonormal second eigenfunctions (with respect to $e^{2v}dA_{\mathbb{S}2}$), $ \eta_1 $ and $ \eta_2 $ such that $\eta_1(x_1,x_2,x_3) = -\eta_1(-x_1,x_2,x_3)$ and $\eta_2(x_1,x_2,x_3) = -\eta_2(x_1,-x_2,x_3)$ and any orthonormal eigenfunction orthogonal to $\phi_1$ and $\phi_3$ has to be written
$$ \eta_1 \circ R_{\alpha} = \cos\left( \varphi(\alpha) \right) \eta_1 + \sin\left( \varphi(\alpha)\right) \eta_2 $$
and
$$ \eta_2 \circ R_{\alpha} = - \sin\left( \varphi(\alpha) \right) \eta_1 + \cos\left( \varphi(\alpha)\right) \eta_2 $$
for some function $\varphi$ since $\left(\eta_i \circ R_\alpha\right)_{i=1,2}$ is orthonormal with respect to $e^{2v}dA_{\mathbb{S}^2}$. We deduce that
$$ \eta_1^2 + \eta_2^2 = \left(\eta_1^2 + \eta_2^2\right)\circ R_\alpha $$
for any $\alpha$ so that we can set a function $g$ such that $g(x_3) = \left(\eta_1^2 + \eta_2^2\right)^{\frac{1}{2}}(x)$ for any $x\in \mathbb{S}^2$.

We denote for a function $\phi : \mathbb{S}^2 \to \R$, $\tilde \phi = \phi \circ \pi^{-1}$ where $\pi :\mathbb{S}^2\setminus\{0,0,1\} \to \mathbb{R}^2$ is the stereographic projection with respect to the north pole. We have functions $\tilde{f}$ and $\tilde{h}$ such that $\tilde{\phi}_0(z) = \tilde{f}(r)$ and $\tilde{\phi}_3(x) = \tilde{h}(r)$, where $z = (r \cos\theta, r\sin\theta) $. We also have a function $\tilde{g}$ such that 
$$ \tilde{\eta}_1(z) = \cos\left( \beta(r,\theta) \right) \tilde{g}(r) \text{ and } \tilde{\eta}_2(z) = \sin\left( \beta(r,\theta) \right) \tilde{g}(r) $$
Now, since $\widetilde{\eta_1\circ R_\alpha} ( r e^{i\alpha}) = 0 $, for any $r$  we have that
$$ \forall r, \beta(r,\alpha) = \varphi(\alpha). $$
Finally, using the second eigenvalue equation on $\eta_i$ in the polar coordinates the chart $\pi$:
$$- \left( \frac{1}{r}\partial_r\left(r \left(\partial_r \tilde{\eta}_i\right)\right) + \frac{1}{r^2} \partial_{\theta}^2 \tilde{\eta}_i \right) = \sigma_2 e^{2\tilde{v}} \tilde{\eta_i}$$
We must have that $\forall \theta, \varphi'(\theta) $ is a constant, and then that $\varphi(\theta) = \theta$. so that 
$$ \tilde{\eta}_1(z) = \cos \theta  \tilde{g}(r) \text{ and } \tilde{\eta}_2(z) = \sin\theta  \tilde{g}(r) $$

Then, we prove that up to multiply $\eta_1$ and $\eta_2$ by the same constant, $\phi_1 = \eta_1$ and $\phi_2 = \eta_2$. 
Up to rotations and up to multiply $\eta_1$ and $\eta_2$ be the same constant, we have $a,b \in \R$ and $\alpha,\beta\in \R$, with $b \neq 0$ such that
$$ \phi_1 = \eta_1 + \alpha \phi_3  \text{ and } \phi_2 = a \eta_1 + b \eta_2 + \beta \phi_3. $$
Then
\begin{equation*}
\begin{split} 
1 + n = & \sigma_1 \phi_1^2 + \sigma_2\left( (1+\alpha^2 + \beta^2) \phi_3^2\right) + \sigma_2 \left( \eta_1^2 (1+a^2) + \eta_2^2 b^2 + 2ab \eta_1 \eta_2 \right) \\
& + 2 \sigma_2 \phi_3 \left((\alpha + \beta a) \eta_1 + \beta b \eta_2\right)
\end{split} 
\end{equation*}
We apply the stereographic projection $\pi$, we divide by $\tilde{g}$ and make a derivation with respect to $r$ and $\theta$:
$$\tilde{g}'(r) \left( \sin(2\theta) (b^2 - a^2 - 1) + \cos(2\theta)b^2 \right) + \tilde{h}'(r) \left( -(\alpha+\beta a)\sin\theta + \beta b \cos \theta \right) $$
and since $\tilde{h}'$ and $\tilde{g}'$ are independent functions, 
$$ b^2 - a^2 - 1 = 0 \text{ and } b^2 = 0 \text{ and } \alpha+\beta a = 0 \text{ and } \beta b = 0 $$
so that $b= 0, a = \pm 1$ and $ \alpha + \beta a = 0 $. Then, we obtain that $n$ is a constant function, and then $n = 0$. By the previous proposition, $\alpha = \beta = 0$.

We deduce that $\Phi : \mathbb{S}^2 \to \mathcal{E}$ is a harmonic map. Since in the chart $\pi$, 
$$\left(\left\vert \tilde{\Phi}_x \right\vert^2 - \left\vert \tilde{\Phi}_y \right\vert^2- 2i \left\langle \tilde{\Phi}_x,\tilde{\Phi}_y \right\rangle\right)  dz^2$$ 
is a quadradic holomorphic form on $\mathbb{S}^2$, it has to be equal to $0$ and $\Phi$ is conformal.
\end{proof}

\begin{proof}[Proof of Theorem \ref{theocriticalembedded}] 
Notice first that $\Phi$ is an immersion (Step 1 in the previous subsection). It suffices to prove that $\Phi$ is injective.

We set $\tilde{\Phi} = \Phi \circ \pi$ in the conformal chart $\pi : \mathbb{S}^2 \setminus\{(0,0,1)\} \to \R^2$, we have that the orthogonal family 
$$(\sigma. \tilde{\Phi} , \tilde{\Phi}_r , \tilde{\Phi}_\theta , \tilde{n}) = 
\begin{pmatrix} \sigma_1 f & f & 0 & \tilde{n}_0 \\
\sigma_2 g \cos\theta & g' \cos\theta & -g \sin\theta & \tilde{n}_1 \\
\sigma_2 g \sin\theta & g'\sin\theta & g\cos\theta & \tilde{n}_2 \\
\sigma_2 h & h' & 0 & \tilde{n}_3
 \end{pmatrix}$$
satisfies that $\tilde{\phi}_0$ and $\tilde{n}_0$ are radial functions, that $\phi_0^{-1}(\{0\}) \subset  n_0^{-1}\left(\{0\}\right)$ and that $\tilde{n}_0$ is antisymmetric with respect to the inversion $z \mapsto \frac{z}{\left\vert z \right\vert^{2}}$. In fact, up to the multiplication of a positive radial function, we have that
$$ \tilde{n}_0 \sim g(g' h -h' g) \sim g \frac{g'h-h'g}{\sqrt{g^2 +h^2} } \sim - \left(\frac{h}{\sqrt{g^2 +h^2} }\right)' \sim -\left(\frac{\tilde{\eta}}{\left\vert \tilde{\eta} \right\vert}\right)_3' $$
We denote $1= r_0 > r_1 > ,r_2 > \cdots > r_N$ the zeros of $\tilde{n}_0$. Then, we claim that the signs of $\left(\frac{\tilde{\eta}}{\left\vert \tilde{\eta} \right\vert}\right)_3(r_i)$ have to alternate as $i = r_0 \cdots r_N$. Indeed, if we have that the signs are the same for $r_i$ and $r_{i+1}$, for instance $\left(\frac{\tilde{\eta}}{\left\vert \tilde{\eta} \right\vert}\right)_3(r_i)$ and $\left(\frac{\tilde{\eta}}{\left\vert \tilde{\eta} \right\vert}\right)_3(r_{i+1})$ are positive then letting $v \in \mathbb{S}^2 \cap \{ \eta(r_{i+1}) \}^{\perp} \cap \left\langle (0,0,1), \eta(r_{i+1}) \right\rangle$, the eigenfunction $\left\langle \eta,v\right\rangle$ has an isolated zero point at $\pi^{-1}(r_{i+1})$. This is a contradiction. 

Now, we deduce that $\tilde{\phi}_3^{-1}\left(\{0\}\right)\cap \mathbb{D} = \{ s_0,\cdots, s_{N} \}$ for $r_0 > s_0 > r_1 > s_2 > \cdots r_N > s_N$. Since $\tilde{\phi}_3$ is radial, symmetric with respect to the inversion $z \mapsto \frac{z}{\left\vert z \right\vert^{2}}$ and the nodal set has at most two connected components, it follows that $N=0$.

Finally, we deduce that $\frac{\eta}{\left\vert \eta \right\vert} : \mathbb{S}^2_+ \to \mathbb{S}^2 $ is an embedding. In particular, it is injective. By symmetries, $\Phi$ is an injective map, so that $\Phi$ is an embedding.
\end{proof}

\section{Proof of Theorem \ref{theoasymptotic}} \label{sectionasymptotic}

\subsection{Notations and preliminaries}
On the Euclidean sphere $(\mathbb{S}^2,h)$, we set the following competitor $g_{\eps} = e^{2\omega_{\eps}}h$, where for $y=(y_0,y_1,y_2) \in \mathbb{S}^2$ we set
$$ e^{2\omega_{\eps}(y)} = \frac{\beta_{\eps}^2 - 1}{\left( \beta_{\eps} + y_0 \right)^2} + \frac{\beta_{\eps}^2 - 1}{\left( \beta_{\eps} - y_0 \right)^2} \hskip.1cm,$$
for
$$ \beta_{\eps} = \frac{1+\eps^2}{1-\eps^2} > 1 \hskip.1cm.$$
Let's describe the surface $(\mathbb{S}^2,g_{\eps})$ in conformal coordinates.

We define $\pi_{N} : \mathbb{S}^2\setminus\{N\} \to \mathbb{R}^2$ the stereographic projection with respect to the north pole $N=(1,0,0)$ by
$$ \pi_{N}\left( y_0,y_1,y_2 \right) = \left(\frac{y_1}{1-y_0},\frac{y_2}{1-y_0} \right) $$
with inverse
$$ \pi_{N}^{-1}\left( x_1,x_2 \right) = \left(\frac{\left\vert x \right\vert^2 -1}{\left\vert x \right\vert^2 + 1}, \frac{2x_1}{\left\vert x \right\vert^2 + 1}, \frac{2x_2}{\left\vert x \right\vert^2 + 1} \right) \hskip.1cm, $$
and $\pi_{S} : \mathbb{S}^2\setminus\{S\} \to \mathbb{R}^2$ the stereographic projection with respect to the south pole $S=(-1,0,0)$ by
$$ \pi_{S}\left( y_0,y_1,y_2 \right) = \left(\frac{y_1}{1+y_0},\frac{y_2}{1+y_0} \right) \hskip.1cm,$$
with inverse
$$ \pi_{S}^{-1}\left( x_1,x_2 \right) = \left(\frac{1-\left\vert x \right\vert^2 }{\left\vert x \right\vert^2 + 1}, \frac{2x_1}{\left\vert x \right\vert^2 + 1}, \frac{2x_2}{\left\vert x \right\vert^2 + 1} \right)\hskip.1cm.  $$
Notice that $\pi_N^\star \left(e^{2u}\xi\right) = \pi_{S}^{\star} \left(e^{2u}\xi\right) =  h$ for the classical function
$$  u(x) = \ln\left( \frac{2}{1+\left\vert x \right\vert^2} \right) $$
where $x\in \mathbb{R}^2$ and $\xi$ is the Euclidean metric. Notice that $u$ satisfies the classical Liouville equation on $\mathbb{R}^2$: $\Delta u = e^{2u}$. We notice that the function $\tilde{\omega}_{\eps} : \mathbb{R}^2 \to \mathbb{R}$
$$ \tilde{\omega}_{\eps}(x) = \frac{1}{2}\ln\left( e^{2\tilde{u}_{\eps}(x)} + e^{2\tilde{v}_{\eps}(x)} \right) \hskip.1cm,$$
where
$$ e^{2\tilde{u}_{\eps}(x)} = \frac{1}{\eps^2}e^{2u\left( \frac{x}{\eps} \right)} \hbox{ and } e^{2\tilde{v}_{\eps}(x)} = \eps^2 e^{2u\left(\eps x\right)}$$
for $x\in \mathbb{R}^2$ satisfies 
$$ \pi_{S}^{\star} \left(e^{2\tilde{\omega}_{\eps}}\xi \right) = \pi_{N}^{\star} \left(e^{2\tilde{\omega}_{\eps}}\xi \right)= e^{2\omega_{\eps}}h \hskip.1cm. $$
Notice that the metrics $e^{2\tilde{u}_{\eps}(x)}\xi$ and $e^{2\tilde{v}_{\eps}(x)}\xi$ are isometric to $e^{2u}\xi$ on $\mathbb{R}^2$ as rescaled metrics of $e^{2u}\xi$. We denote
$$ e^{2\omega_{\eps}^N(x)} = e^{2\omega_{\eps}^S(x)} = \eps^2 e^{2\tilde{\omega}_{\eps}(\eps x)}  \hskip.1cm,$$
the rescaled potentials at the north and south pole. We see that the metric $e^{2\omega_{\eps}^N}\xi $ converges to $e^{2u}\xi$ in any compact set of $\mathbb{R}^2$ as $\eps\to 0$. By symmetry, $\omega_{\eps}^S = \omega_{\eps}^N$ also satisfies that $e^{2\omega_{\eps}^S}\xi $ and converges to $e^{2u}\xi$ on any compact set of $\mathbb{R}^2$. Then the surface $(\mathbb{S}^2,g_{\eps})$ geometrically represents two round spheres attached by a thin neck.
 
Now, except for the $\tilde{\omega}_{\eps}$ potentials $\omega_{\eps}^N$ and $\omega_{\eps}^S$, we denote for any function $\varphi :\mathbb{S}^2 \to \mathbb{R} $ 
$$ \widetilde{\varphi}(x) = \varphi \circ \left(\pi_N\right)^{-1}(x) \hskip.1cm,  $$
$$ \varphi^N(x) = \varphi \circ \left(\pi_N\right)^{-1}(\eps x) \hbox{ and } \varphi^S(x) = \varphi \circ \left(\pi_S\right)^{-1}(\eps x) \hskip.1cm,$$
so that an eigenvalue equation $\Delta_{h} \varphi_{\eps} = \lambda_{\eps} e^{2\omega_{\eps}} \varphi_{\eps}$ on $\mathbb{S}^2$ satisfies
$$ \Delta \widetilde{\varphi}_{\eps} = \lambda_{\eps} e^{2\tilde{\omega}_{\eps}} \widetilde{\varphi}_{\eps}\hskip.1cm, $$
$$\Delta \varphi^N_{\eps} = \lambda_{\eps} e^{2\omega^N_{\eps}} \varphi^N_{\eps} \hbox{ and } \Delta \varphi^S_{\eps} = \lambda_{\eps} e^{2\omega^S_{\eps}} \varphi^S_{\eps} \hskip.1cm.$$

Notice also that the area $A_{g_{\eps}}\left(\mathbb{S}^2\right) = 8\pi$ by a simple sum of area of standard metrics on the sphere. We set $\lambda_{1}^{\eps} = \lambda_1(g_{\eps})$ and $\lambda_2^{\eps} = \lambda_2(g_{\eps})$ the first and second eigenvalue associated to $(\mathbb{S}^2,g_{\eps})$. 

\subsection{Upper bounds for first and second eigenvalue} In this part, we aim at proving that
\begin{equation} \label{eqrayleighf1}
\lambda_1^{\eps} \leq \frac{1}{2\ln\frac{1}{\eps} } + O\left( \frac{1}{\left(\ln\frac{1}{\eps}\right)^2}\right)
\end{equation}
\begin{equation} \label{eqrayleighf2}
\lambda_2^{\eps}  \leq 2 +O(\eps^2)
\end{equation}
as $\eps\to 0$.
We test the following test functions $f_1,f_2$ in the classical variational characterization of $\lambda_1^{\eps}$ and $\lambda_2^{\eps}$:
\begin{equation} f_1\circ \pi_N^{-1}(x) = 
\begin{cases} 1 & \hbox{ on } \mathbb{D}_{\eps} \\
\frac{\ln\left(\left\vert x \right\vert\right)}{\ln \eps} & \hbox{ on } \mathbb{D}_{\frac{1}{\eps}}\setminus\mathbb{D}_{\eps} \\
-1 & \hbox{ on } \mathbb{R}^2\setminus \mathbb{D}_{\frac{1}{\eps}} \hskip.1cm.
 \end{cases}  \end{equation}
and
\begin{equation} f_2(y) = \sqrt{\beta_{\eps}^2-1} \frac{y_1}{\beta_{\eps}+y_0} \hskip.1cm, \end{equation}
noticing that $f_2(y) = e_1 . \left(\pi_N\right)^{-1} \left( \frac{\pi_N(y)}{\eps}\right)$ is a first eigenfunction associated to the metric $\frac{\beta_{\eps}^2 - 1}{\left( \beta_{\eps} + y_0 \right)^2} h$, isometric to $h$. Notice that $f_1$ and $f_2$ depend on $\eps$ but we do not indicate this dependance in the following computations. For symmetry reasons, we have that 
\begin{equation} \label{eqorthogonalityf1f2}\int_{\mathbb{S}^2} f_1 dA_{g_{\eps}} = \int_{\mathbb{S}^2} f_2 dA_{g_{\eps}} = \int_{\mathbb{S}^2} f_1 f_2 dA_{g_{\eps}} = 0 \hskip.1cm. \end{equation}
We compute
$$ \int_{\mathbb{S}^2} \left\vert \nabla f_1 \right\vert_{g_{\eps}}^2 dA_{g_{\eps}} = \int_{\mathbb{S}^2} \left\vert \nabla f_1 \right\vert_{h}^2 dA_h = \int_{\mathbb{R}^2} \left\vert \nabla f_1\circ \pi_N^{-1} \right\vert^2 dx $$
by conformal invariance of the Dirichlet energy and we easily compute that
$$ \int_{\mathbb{R}^2} \left\vert \nabla f_1\circ \pi_N^{-1} \right\vert^2 dx  =  \int_{\mathbb{D}_{\frac{1}{\eps}}\setminus \mathbb{D}_{\eps}} \left\vert \nabla f_1^N \right\vert^2 = \frac{4\pi}{\ln\frac{1}{\eps}} \hskip.1cm. $$
Moreover, we have that
\begin{eqnarray*}\int_{\mathbb{S}^2} \left( f_1 \right)^2 dA_{g_{\eps}} & = & \int_{\mathbb{R}^2} \left( \tilde{f}_1 \right)^2 e^{2\tilde{\omega}_{\eps}}dx \\
& = & 2\int_{\mathbb{D}} \left( \tilde{f}_1 \right)^2 e^{2\tilde{\omega}_{\eps}} dx \\
& = & 8\int_{\mathbb{D}}  \frac{\eps^2 \left( \tilde{f}_1 \right)^2(x)}{\left(\eps^2+\left\vert x \right\vert^2\right)^2}dx + 8\int_{\mathbb{D}}  \frac{\eps^2 \left( \tilde{f}_1 \right)^2(x)}{\left(1+\eps^2 \left\vert x \right\vert^2\right)^2} dx \\
& = & 8\int_{\mathbb{D}} \left( 1 - \frac{\ln\left\vert x \right\vert}{\ln \eps}  \right)^2  \frac{1}{\left(1+\left\vert x \right\vert^2\right)^2}dx + O(\eps^2) \\
& = & 8\pi + O\left(\frac{1}{\ln \frac{1}{\eps}}\right) \end{eqnarray*}
as $\eps\to 0$, so that \eqref{eqrayleighf1} holds.
Again, by conformal invariance, 
$$ \int_{\mathbb{S}^2} \left\vert \nabla f_2 \right\vert_{g_{\eps}}^2 dA_{g_{\eps}} = \int_{\mathbb{S}^2} \left\vert \nabla y_1 \right\vert_{h}^2 dA_h = \frac{8\pi}{3} \hskip.1cm.$$
We also compute
\begin{eqnarray*}\int_{\mathbb{S}^2} \left( f_2 \right)^2 dA_{g_{\eps}} & = & \left(\beta_{\eps}^2-1\right) \int_{\mathbb{S}^2}\frac{y_1^2}{\left(\beta_{\eps}+y_0\right)^2}  e^{2\omega_{\eps}} dA_h \\
& = & \frac{\left(\beta_{\eps}^2-1\right)}{2} \int_{\mathbb{S}^2}\frac{y_1^2+y_2^2}{\left(\beta_{\eps}+y_0\right)^2}  e^{2\omega_{\eps}} dA_h \\
& = & \frac{\left(\beta_{\eps}^2-1\right)}{2} \int_{\mathbb{S}^2_{-}}\frac{1-y_0^2}{\left(\beta_{\eps}+y_0\right)^2}  e^{2\omega_{\eps}} dA_h + O(\eps^2) \\
& = & \frac{\eps^2}{2} \int_{\mathbb{D}}  \frac{4\left\vert x \right\vert^2}{\left(1+\left\vert x \right\vert^2\right)^2} e^{2\omega_{\eps}^N}dx + O(\eps^2) \\
& = & 2 \int_{\mathbb{D}_{\frac{1}{\eps}}}  \frac{\left\vert x \right\vert^2}{\left(1+\left\vert x \right\vert^2\right)^4} dx + O(\eps^2) \\
& = & \frac{4\pi}{3} + O\left(\eps^2\right)\hskip.1cm,
 \end{eqnarray*}
as $\eps \to 0$ and we obtain \eqref{eqrayleighf2}.

\subsection{The first eigenfunction and first eigenvalue}
In this part, we aim at proving that
\begin{equation} \label{eqfirsteigenvalue} \lambda_1^{\eps} = \frac{1}{2\ln\frac{1}{\eps}} + O\left(\frac{1}{\left(\ln\frac{1}{\eps}\right)^2}\right) \end{equation}
as $\eps\to 0$. Let $\varphi_{\eps,1}$ be a first eigenfunction on $(\mathbb{S}^2,e^{2\omega_{\eps}}h)$ such that $ \int_{\mathbb{S}^2} \left(\varphi_{\eps,1}\right)^2 e^{2\omega_{\eps}}dA_h = 8\pi $. 
Up to symmetrise the eigenfunctions in case of multiplicity, we assume that 
\begin{equation} \label{symmetryfirsteigenfunction} \varphi_{\eps,1}(y_0,y_1,y_2)^2 = \varphi_{\eps,1}(-y_0,y_1,y_2)^2 \hskip.1cm. \end{equation} 
$\left(\varphi_{\eps,1}\right)^{N}$ satisfies the following equation on $\mathbb{R}^2$:
\begin{equation} \label{eqonphieps1N} \Delta \left(\varphi_{\eps,1}\right)^{N} = \lambda_1^{\eps}\frac{4}{\left(1+\left\vert x\right\vert^2\right)^2} \left(\varphi_{\eps,1}\right)^{N} + \lambda_1^{\eps}\frac{4\eps^4}{\left(1+\eps^4\left\vert x\right\vert^2\right)^2} \left(\varphi_{\eps,1}\right)^{N}\hskip.1cm,\end{equation}
with
$$ \int_{\mathbb{R}^2} \left( \frac{4}{\left(1+\left\vert x\right\vert^2\right)^2}  + \frac{4\eps^4}{\left(1+\eps^4\left\vert x\right\vert^2\right)^2}\right) \left(\left(\varphi_{\eps,1}\right)^{N}\right)^2 dx = 8\pi \hskip.1cm.$$
In particular, $\int_{\mathbb{R}^2} \frac{4}{\left(1+\left\vert x\right\vert^2\right)^2}\left(\left(\varphi_{\eps,1}\right)^{N}\right)^2 \leq 8\pi $. By standard elliptic estimates, since $\lambda_{\eps,1}\to 0$
$$ \left(\varphi_{\eps,1}\right)^{N} \to \varphi_{\star}^{N} \hbox{ in } \mathcal{C}^{2}\left(\mathbb{D}_\rho\right) $$
 as $\eps\to 0$, for any $\rho>0$ and $\Delta \varphi_{\star} = 0$ in $\mathbb{R}^2$. Since $\int_{\mathbb{R}^2} \frac{4}{\left(1+\left\vert x\right\vert^2\right)^2}\varphi_{\star}^2 \leq 8\pi$, we get that $\varphi_{\star}$ is a constant function. Looking at $\left(\varphi_{\eps,1}\right)^{S}(\eps x)$ gives the same result by symmetry and we have a constant $\varphi_{\star}^S$ at the limit. Up to take $-\varphi_{\eps,1}$, we assume that $\varphi_{\eps,1}(N)\geq 0$.
 
\medskip 
 
Now let's discuss about radial symmetries. Let $R_{\theta}$ be the rotation of axis $e_0$ and angle $\theta$. Then $M(\varphi_{\eps,1}) = \frac{1}{2\pi}\int_{0}^{2\pi} R_{\theta}^{\star} \varphi_{\eps,1} d \theta$ also satisfies the first eigenvalue equation. $M(\varphi_{\eps,1})$ only depends on $y_0$ and after stereographic projection with respect to $N$ or $S$, it is a radial function. Let's prove that $M(\varphi_{\eps,1})$ is a non-zero function. Indeed, let's prove that $\psi_{\eps} = \varphi_{\eps,1}-M(\varphi_{\eps,1})$ satisfies 
\begin{equation} \label{ineqpsiepsunifbounded} \left\|\psi_{\eps} \right\|_{\infty} \leq C\lambda_1^{\eps} \end{equation}
for a positive constant independent from $\eps$. We let $x_{\eps} \in \mathbb{D}$ be such that $\psi_{\eps}(x_{\eps}) = \left\|\psi_{\eps} \right\|_{\infty}$. We have two cases:

If $\left\vert x_{\eps} \right\vert = O\left(\eps\right)$, then $\psi_{\eps}^N$ satisfies the same equation \eqref{eqonphieps1N} as $\varphi_{\eps,1}^N$. We also know by assumption that for any $\rho>0$ we have $ \int_{\mathbb{D}_\rho} \psi_{\eps}^N = 0 $, so that by standard elliptic theory \eqref{ineqpsiepsunifbounded} holds true.

If $\eps = o\left(\left\vert x_{\eps} \right\vert\right) $, then we set $ \phi_{\eps}(z) = \widetilde{\psi}_{\eps} \left( z \left\vert x_{\eps} \right\vert \right)$ and in this case we obtain the equation
 \begin{equation} \label{eqonphieps1N2} \Delta \phi_{\eps} = \lambda_1^{\eps}\phi_{\eps} \left( \frac{4 \frac{\eps^2}{\left\vert x_{\eps} \right\vert^2}}{\left(\frac{\eps^2}{\left\vert x_{\eps} \right\vert^2}+\left\vert z\right\vert^2\right)^2}+ \frac{4\eps^2\left\vert x_{\eps} \right\vert^2}{\left(1+\eps^2\left\vert x_{\eps} \right\vert^2\left\vert z\right\vert^2\right)^2}\right)\hskip.1cm,\end{equation}
so that by standard elliptic estimates on $A = \mathbb{D}_2\setminus \mathbb{D}_{\frac{1}{2}}$, knowing that $\int_{A} \phi_{\eps} = 0$, we obtain $\left\|\psi_{\eps} \right\|_{\infty} \leq C \frac{\eps^2}{\left\vert x_{\eps} \right\vert^2}\lambda_1^{\eps}$ for some positive constant independent from $\eps$ and \eqref{ineqpsiepsunifbounded} holds true.
 
 \medskip
 
By \eqref{ineqpsiepsunifbounded}, we have that  
$$ 8\pi = \int_{\mathbb{S}^2} \left(\varphi_{\eps,1} \right)^2 e^{2\omega_{\eps}} = \int_{\mathbb{S}^2} \left(M\left(\varphi_{\eps,1}\right)\right)^2 e^{2\omega_{\eps}} + O\left(\lambda_1^{\eps}\right)$$
as $\eps\to 0$. Since $\lambda_1^{\eps} \to 0$ as $\eps\to 0$, we obtain that $M(\varphi_{\eps,1}) $ is a non-zero function. Now, up to replace $\varphi_{\eps,1}$ by $M(\varphi_{\eps,1}) $, we assume that $\tilde{\varphi}_{\eps,1}$ is a radial function.

\medskip

Since $\varphi_{\eps,1}$ is radial and is a first eigenfunction, it has to vanishes just once as a function of $y_0$ by the Courant nodal theorem. This means that with equation \eqref{symmetryfirsteigenfunction}, we have $\varphi_{\eps,1}(y_0,y_1,y_2) = - \varphi_{\eps,1}(-y_0,y_1,y_2)$ before the limit and in particular $\varphi_{\eps,1}(0,y_1,y_2) = 0$. 
By the Courant nodal theorem, $\varphi_{\eps,1}$ must be positive on $\mathbb{S}^2_+$ and negative on $\mathbb{S}^2_-$. We obtain from the equation on $\widetilde{\varphi}_{\eps,1}$ 
\begin{equation}\label{eqradialonphieps1tilde} -\frac{1}{r}\partial_r\left( r \left(\widetilde{\varphi}_{\eps,1}\right)' \right)  = \lambda_{\eps}^1 e^{2\widetilde{\omega}_{\eps}} \widetilde{\varphi}_{\eps,1} \end{equation}
that
$$ \tilde{\varphi}_{\eps,1}(r) - \tilde{\varphi}_{\eps,1}(0) = - \int_0^r \frac{1}{s} \left(\int_0^s \lambda_{\eps}^1 e^{2\tilde{\omega}_{\eps}} \tilde{\varphi}_{\eps,1}(t) t dt \right) ds \leq 0 $$
for any $r\leq 1$. This means that $\varphi_{\eps,1}$ realizes its maximum at $0$.

\medskip

Then, we obtain that for any $\rho>0$,
$$ \int_{\rho\eps}^{1} \left(\tilde{\varphi}_{\eps,1}\right)^2 e^{2\tilde{\omega}_{\eps}} dx \leq\left(\varphi_{\eps,1}(0)\right)^2\int_{\rho\eps}^{1}  e^{2\tilde{\omega}_{\eps}} dx \leq \frac{C}{\rho^2} $$
for a constant $C$ independent from $\rho$ and $\eps$. Letting $\eps\to 0$ and then $\rho\to +\infty$ on the $L^2$ formula for $\varphi_{\eps,1}$, knowing that $\varphi_{\star}^N$ and $\varphi_{\star}^S$ are constant, we get that $\left(\varphi_{\star}^N\right)^2 + \left(\varphi_{\star}^S\right)^2 = 2$. And then, since we also have 
$$ \int_{\mathbb{S}^2} \varphi_{\eps,1} e^{2\omega_{\eps}}dA_h = 0 \hskip.1cm,$$
$\varphi_{\star}^N = 1$ and $\varphi_{\star}^S = -1$. 

\medskip

Now, by the Green representation formula, 
\begin{eqnarray*} \tilde{\varphi}_{\eps,1}(0) &= & \frac{\lambda_{\eps}^1}{2\pi} \int_{\mathbb{D}} (e^{2u_{\eps}}+e^{2v_{\eps}}) \tilde{\varphi}_{\eps,1}(z)\left(\ln\frac{1}{\left\vert z\right\vert}\right) dz \\
& \leq & \lambda_{\eps}^1 \tilde{\varphi}_{\eps,1}(0) \int_{0}^{1} \frac{4\eps^2}{\left(\eps^2+r^2 \right)^2}  \left(\ln \frac{1}{r}\right) r dr + O\left(\eps^2 \right) \\
& \leq & \lambda_{\eps}^1 \tilde{\varphi}_{\eps,1}(0) \int_{0}^{\frac{1}{\eps}} \frac{4r}{\left(1+r^2 \right)^2}  \left(\ln \frac{1}{\eps r} \right)dr + O\left(\eps^2 \right) \\
& \leq & \lambda_{\eps}^1 \tilde{\varphi}_{\eps,1}(0)\left(\ln\frac{1}{\eps}\right) \int_{0}^{+\infty} \frac{4r}{\left(1+r^2 \right)^2} dr + O\left( \lambda_{1}^{\eps}\right) + O\left(\eps^2 \right) \\ \end{eqnarray*}
and knowing that $\tilde{\varphi}_{\eps,1}(0) \to 1$ as $\eps\to 0$, we obtain
$$ \lambda_1^{\eps} \geq  \frac{1}{2\ln\frac{1}{\eps}} + O\left(\frac{\lambda_{\eps}^1}{\ln\frac{1}{\eps}}\right)$$
as $\eps\to 0$. With \eqref{eqrayleighf1}, we obtain \eqref{eqfirsteigenvalue}.

\subsection{The second eigenfunction and second eigenvalue}
In this section, we aim at proving that 
\begin{equation}\label{eqprovelambda2eps} \lambda_2^\eps = 2 - 6\eps^2 + o(\eps^2) \end{equation}
as $\eps\to 0$.

We focus on the equation satisfied by $\varphi_{\eps,2}$, an eigenfunction associated to the second non-zero eigenvalue $\lambda_{\eps}^2$, satisfying 
$$ \int_{\mathbb{S}^2} \left(\varphi_{\eps,2}\right)^2 e^{2\omega_{\eps}}dA_h = \frac{8\pi}{3} \hbox{ and } \int_{\mathbb{S}^2} \varphi_{\eps,2} e^{2\omega_{\eps}}dA_h = \int_{\mathbb{S}^2} \varphi_{\eps,1}\varphi_{\eps,2} e^{2\omega_{\eps}}dA_h = 0 \hskip.1cm,$$
where $\varphi_{\eps,1}$ is the radial first eigenfunction of the previous section. Up to symmetrization again, in case of multiplicity, we assume in addition that 
$ \varphi_{\eps,2}(y_0,y_1,y_2)^2 = \varphi_{\eps,2}(-y_0,y_1,y_2)^2 $.
In fact, we must have
\begin{equation} \label{symmetrysecondeigenfunction} \varphi_{\eps,2}(y_0,y_1,y_2) = \varphi_{\eps,2}(-y_0,y_1,y_2) \hskip.1cm. \end{equation} 
Indeed, if not, we would have antisymmetry and in particular that $\varphi_{\eps,2}(0,y_1,y_2) = 0$. By orthogonality with $\varphi_{\eps,1}$ which is anti-symmetric, $\varphi_{\eps,2}$ must vanish elsewhere: $\varphi_{\eps,2}(y) = 0$ for $y_0> 0$. The nodal line containing $y$ is either a closed nodal line in $\mathbb{S}^2_+$ or contains a line in $\mathbb{S}^2_+$ joining two points in $\partial\mathbb{S}^2_+$.  Then $\varphi_{\eps,2}$ has at least two nodal domains in $\mathbb{S}^2_+$. By symmetry, $\varphi_{\eps,2}$ has at least four nodal domains, contradicting the Courant nodal theorem. We also deduce from the orthogonality with the constants that
$$ \int_{\mathbb{S}^2_+} \varphi_{\eps,2} e^{2\omega_{\eps}}dA_h = \int_{\mathbb{S}^2_-} \varphi_{\eps,2} e^{2\omega_{\eps}}dA_h = 0 \hskip.1cm. $$
Now, $\left(\varphi_{\eps,2}\right)^N$ satisfies the equation
$$ \Delta \left(\varphi_{\eps,2}\right)^{N} = \lambda_2^{\eps}\frac{4}{\left(1+\left\vert x\right\vert^2\right)^2} \left(\varphi_{\eps,2}\right)^N + \lambda_2^{\eps}\frac{4\eps^4}{\left(1+\eps^4\left\vert x\right\vert^2\right)^2} \left(\varphi_{\eps,2}\right)^{N}\hskip.1cm,$$
with
$$ \int_{\mathbb{R}^2} \left( \frac{4}{\left(1+\left\vert x\right\vert^2\right)^2}  + \frac{4\eps^4}{\left(1+\eps^4\left\vert x\right\vert^2\right)^2}\right) \left(\left(\varphi_{\eps,2}\right)^{N}\right)^2 dx = \frac{8\pi}{3} \hskip.1cm.$$
In particular, $\int_{\mathbb{R}^2} \frac{4}{\left(1+\left\vert x\right\vert^2\right)^2}\left(\left(\varphi_{\eps,2}\right)^{N}\right)^2 \leq \frac{8\pi}{3} $. $\lambda_{2}^{\eps}$ is bounded by \eqref{eqrayleighf2} and converges up to the extraction of a subsequence to $\lambda_{\star,2}$. By standard elliptic estimates, up to the extraction of a subsequence,
\begin{equation} \label{equnifconvpgieps2} \left(\varphi_{\eps,2}\right)^{N} \to \varphi_{\star,2}^{N} \hbox{ in } \mathcal{C}^{2}\left(\mathbb{D}_\rho \right) \end{equation}
for any $\rho>0$ and $\Delta \varphi_{\star,2}^N = \lambda_{\star,2} e^{2u}\varphi_{\star,2}^{N}$ on $\mathbb{R}^2$. By boundedness of the energy, $\varphi_{\star,2}\circ \pi_N$ has to be an eigenfunction associated to $\lambda_{\star,2}$ on the round sphere. Since $\lambda_{\star,2}\leq 2$ by \eqref{eqrayleighf2}, we have either $\lambda_{\star} = 0$ or $\lambda_{\star} =2$. We aim at proving that $\lambda_{\star} = 2$ and at getting an estimate on $\delta_{\eps} := \lambda_{\eps,2}-2$ as $\eps\to 0$.

\medskip

Now, let's discuss about radial symmetries. Let $R_{\theta}$ be the rotation of axis $e_0$ and angle $\theta$.
Then $M(\varphi_{\eps,2}) = \frac{1}{2\pi}\int_{0}^{2\pi} R_{\theta}^{\star} \varphi_{\eps,2} d \theta$ also satisfies the second eigenvalue equation. 
If $M(\varphi_{\eps,2}) \neq 0$, up to replace $\varphi_{\eps,2}$ by this function, we assume that $\varphi_{\eps,2}$ is radial.
Up to replace $\varphi_{\eps,2}$ by $-\varphi_{\eps,2}$, we also assume $\varphi_{\eps,2}(1,0,0) \geq 0$. We discuss this case in subsection \ref{subsectionradialcase}

If $M(\varphi_{\eps,2}) = 0$, then $\left(M(\varphi_{\eps,2})\right)(N) = \left(M(\varphi_{\eps,2})\right)(S) = 0$. Then up to a rotation, and a symmetrization, we assume that 
\begin{equation} \label{eqsymassumptionphieps2}\varphi_{\eps,2}(y_0,y_1,y_2) = - \varphi_{\eps,2}(y_0,-y_1,y_2) \hskip.1cm, \end{equation}
Up to replace $\varphi_{\eps,2}$ by $-\varphi_{\eps,2}$, we also assume $\varphi_{\eps,2}(0,1,0) \geq 0$. We discuss this case in subsection \ref{subsectionnonradialcase}

\subsubsection{The non radial case} \label{subsectionnonradialcase}
We assume that $M(\varphi_{\eps,2})=0$. Let's prove first that $\varphi_{\eps,2}$ is uniformly bounded. We let $x_{\eps} \in \mathbb{D}$ be such that $\tilde{\varphi}_{\eps,2}(x_{\eps}) = \left\|\varphi_{\eps,2} \right\|_{\infty}$. We have two cases:

If $\left\vert x_{\eps} \right\vert = O\left(\eps\right)$, then the equation on $\varphi_{\eps,2}^N$ and the assumption that for any $\rho>0$ we have $ \int_{\mathbb{D}_\rho} \varphi_{\eps,2}^N = 0 $, gives by standard elliptic theory the uniform boundedness.

If $\eps = o\left(\left\vert x_{\eps} \right\vert\right) $, then we set $ \phi_{\eps}(z) = \tilde{\varphi}_{\eps,2}\left( z \left\vert x_{\eps} \right\vert \right)$ and in this case we obtain the equation
 \begin{equation} \label{eqonphieps2N} \Delta \phi_{\eps} = \lambda_2^{\eps}\phi_{\eps} \left( \frac{4 \frac{\eps^2}{\left\vert x_{\eps} \right\vert^2}}{\left(\frac{\eps^2}{\left\vert x_{\eps} \right\vert^2}+\left\vert z\right\vert^2\right)^2}+ \frac{4\eps^2\left\vert x_{\eps} \right\vert^2}{\left(1+\eps^2\left\vert x_{\eps} \right\vert^2\left\vert z\right\vert^2\right)^2}\right)\hskip.1cm,\end{equation}
 so that by standard elliptic estimates on $A = \mathbb{D}_2\setminus \mathbb{D}_{\frac{1}{2}}$, knowing that $\int_{A} \phi_{\eps} = 0$, we obtain $\left\|\tilde{\varphi}_{\eps,2} \right\|_{\infty} \leq C \frac{\eps^2}{\left\vert x_{\eps} \right\vert^2}\lambda_2^{\eps}$ for some positive constant independent from $\eps$ and $\tilde{\varphi}_{\eps,2}$ is uniformly bounded.
 
 \medskip
 
Then, since $\tilde{\varphi}_{\eps,2}$ is uniformly bounded, we have that for any $\rho>0$, 
$$ \int_{\rho\eps}^{1} \left(\tilde{\varphi}_{\eps,2}\right)^2 e^{2\tilde{\omega}_{\eps}} dx \leq\frac{C}{\rho^2} $$
for a constant $C$ independent from $\rho$ and $\eps$. Letting $\eps\to 0$ and then $\rho\to +\infty$ on the $L^2$ formula for $\varphi_{\eps,2}$, we get that
$$ \int_{\mathbb{R}^2} \left(\varphi_{\star,2}^N\right)^2 e^{2u}dx =  \frac{4\pi}{3} \hbox{ and } \int_{\mathbb{R}^2}\varphi_{\star,2}^N e^{2u}dx  = 0 \hskip.1cm.$$
Then, by orthogonality with the constant functions, we must have that $\lambda_{\star,2} = 2$, and the inverse stereographic projection of $\left(\varphi_{\star,2}\right)^N $ has to be a first eigenfunction on the sphere. We have the following linear combination:
$$ \varphi_{\star,2}^N =  \sum_{i=0}^3 a_{\star,i} Y_i \hskip.1cm, $$
where $Y_i(x) = \pi_N^{-1}\left(x\right).e_i$ with $\sum_{i=0}^3 \left(a_{\star,i}\right)^2 = 1$. By the symmetry before the limit \eqref{eqsymassumptionphieps2}, we have $\varphi_{\star,2}^N(y_0,0,y_2) = 0$ and $\varphi_{\star,2}^N(0,1,0)>0$. We obtain that $a_{\star,1} = 1$ and that $a_{\star,0} = a_{\star,2} = 0$.

\medskip

Now, we aim at estimating $\delta_{\eps} =  \lambda_{\eps}^2 -2 $.

\medskip

We define $R_{\eps} : \mathbb{S}^2 \to \mathbb{R}$ such that its inverse stereographic projection $\tilde{R}_{\eps} = R_{\eps} \circ \left(\pi_N\right)^{-1}$ satisfies
\begin{equation}\tilde{R}_{\eps}(x) = \varphi_{\eps,2}(x) - \eta_{\eps}\left(Y_1\left(\frac{x}{\eps}\right) + Y_1\left(x\eps\right)\right) \hskip.1cm. \end{equation}
and such that $R_{\eps} \circ \left(\pi_S\right)^{-1}$ satisfies the same formula by symmetry, where $\eta_{\eps}$ is defined such that $\nabla R_{\eps}(S) =0$ and $ \nabla R_{\eps}(N) =0$. Such a $\eta_{\eps}$ exists because by the symmetry assumption \eqref{eqsymassumptionphieps2}, $\partial_{x_2} \varphi_{\eps,2}^N(0) = \partial_{x_2} Y^1(0) = 0 $ and $\eta_{\eps}$ can be defined as
$$ \eta_{\eps} = \frac{\partial_{x_1}\tilde{\varphi}_{\eps,2}(0)}{ \left(\frac{1}{\eps}+\eps\right) \partial_{x_1}Y^1(0) } = \frac{\partial_{x_1}\varphi^N_{\eps,2}(0)}{2\left(1+\eps^2\right)} \hskip.1cm.$$
Notice that $\eta_{\eps} = 1+o(1)$ as $\eps\to 0$. We obtain the following equation of $R_{\eps}$ on $\mathbb{S}^2$:
\begin{equation}
\label{eqonRepsnonradialsphere}
\Delta_h R_{\eps} - 2 e^{2\omega_{\eps}}R_{\eps} = \left(\lambda_{\eps}^2-2\right)e^{2 \omega_{\eps}}\varphi_{\eps,2} + 2\eta_{\eps}\left( e^{2u_{\eps}} Y_{1,\eps}^N  + e^{2v_{\eps}} Y_{1,\eps}^S\right)
\end{equation}
then integrating against $R_{\eps}$ in $\mathbb{S}^2$,
\begin{equation}\label{eqW12estonRepsnonradial}
\begin{split}
\int_{\mathbb{S}^2} \left\vert \nabla R_{\eps}\right\vert_h^2 =&  2 \int_{\mathbb{S}^2} \left(R_{\eps}\right)^2 e^{2u_{\eps}} + \left(\lambda_{\eps}^2-2\right) \int_{\mathbb{S}^2}e^{2\omega_{\eps}} \varphi_{\eps,2}R_{\eps} + 2\eta_{\eps} \int_{\mathbb{S}^2} \left(e^{2u_{\eps}} Y_{1,\eps}^N  + e^{2v_{\eps}} Y_{1,\eps}^S\right) R_{\eps} \\
\leq & \left\| R_{\eps} \right\|_{\infty} \left( \left\| R_{\eps} \right\|_{\infty} +2\pi \left\| \varphi_{\eps,2} \right\|_{\infty} \left\vert \lambda_{\eps}^2 -2 \right\vert + O\left(\eps^2\right) \right) \\
\leq & C \left\| R_{\eps} \right\|_{\infty} \left(\left\| R_{\eps} \right\|_{\infty} + \delta_{\eps} + \eps^2  \right) \hskip.1cm,
\end{split}
\end{equation}
where we computed $I:= \int_{\mathbb{S}^2} \left(e^{2u_{\eps}} Y_{1,\eps}^N  + e^{2v_{\eps}} Y_{1,\eps}^S\right) R_{\eps}$ as
\begin{equation*} 
\begin{split}
I = & 2 \int_{\mathbb{S}^2_-} \left(e^{2u_{\eps}} Y_{1,\eps}^N  + e^{2v_{\eps}} Y_{1,\eps}^S\right) R_{\eps} \\
= & \int_{\mathbb{D}} \left(\frac{4\eps^2}{\left(1+ \eps^2 \left\vert x \right\vert^2  \right)^2} \frac{2x_1 \eps}{\eps^2 + \left\vert x \right\vert^2 } + \frac{4\eps^2}{\left(\eps^2+ \left\vert x \right\vert^2  \right)^2} \frac{2x_1 \eps}{1 + \eps^2\left\vert x \right\vert^2 } \right) \tilde{R}_{\eps} \\
\leq & 2\pi \left\| R_{\eps} \right\|_{\infty} \int_0^1 \left( \frac{4 \eps^2}{\left(1+ \eps^2 r^2  \right)^2} \frac{2 r \eps}{\eps^2 + r^2 } + \frac{4\eps^2}{\left(\eps^2+ r^2  \right)^2} \frac{2r \eps}{1 + \eps^2r^2 }  \right)r dr \\
= & 2\pi \left\| R_{\eps} \right\|_{\infty} \eps^2 \int_0^{\frac{1}{\eps}} \left( \frac{4 u \eps}{\left(1+ \eps^4 u^2  \right)^2} \frac{2 u }{1 + u^2 } + \frac{4 u}{\left(1+ u^2  \right)^2} \frac{2u}{1 + \eps^4 u^2 }  \right) du \\
\leq & 2\pi \left\| R_{\eps} \right\|_{\infty} \eps^2 \left( 8 + 2\pi \right) = O\left( \left\| R_{\eps} \right\|_{\infty} \eps^2 \right)
\end{split}
\end{equation*}
as $\eps\to 0$. Letting 
\begin{equation}\label{defalphaepsnonradial} \alpha_{\eps} = \left\| R_{\eps} \right\|_{L^\infty\left(\mathbb{D}\right)} + \left\vert\delta_{\eps}\right\vert + \eps^2 \hskip.1cm,\end{equation}
we get that 
\begin{equation} \label{eqW12estonRepsnonradial2}
\int_{\mathbb{D}} \left\vert \nabla R_{\eps}\right\vert^2 \leq O\left( \alpha_{\eps}^2 \right)
\end{equation}
as $\eps\to 0$. Letting $R_{\eps}^N(x) = R_{\eps}(\eps x)$, we get from \eqref{eqonRepsnonradialsphere} that
\begin{equation}
\label{eqonRNepsnonradial}
\begin{split}
\Delta R_{\eps}^N - 2 e^{2u} R_{\eps}^N = & \left(\lambda_{\eps}^2-2\right)e^{2 u}\tilde{\varphi}_{\eps,2}^N  + 2 e^{2u} \eta_{\eps} Y_1(\eps^2 x) \\
& + \frac{4\eps^4}{\left(1+\eps^4 \left\vert x \right\vert^2\right)^2}  \left( \lambda_{\eps}^2\tilde{\varphi}^N_{\eps,2}- 2 \eta_{\eps}Y_1(\eps^2 x) \right)
\hskip.1cm,
\end{split}
\end{equation}
where $Y_1(\eps^2 x) = \frac{2 \eps^2 x_1}{1+\eps^4 \left\vert x \right\vert^2}$.
Dividing this equation by $\alpha_{\eps}$, by standard elliptic theory, we obtain up to the extraction of a subsequence that 
\begin{equation} \label{defRNstarnonradial} \frac{R_{\eps}^N}{\alpha_{\eps}} \to R_{\star}^{N} \hbox{ in } \mathcal{C}^{2}\left(\mathbb{D}_\rho\right) \end{equation}
as $\eps\to 0$ for any $\rho>0$. We also have that up to the extraction of a subsequence
$$ \frac{\delta_{\eps}}{\alpha_{\eps}} \to \delta_{\star} \text{ and } \frac{\eps^2}{\alpha_{\eps}}\to e_{\star} \text{ as } \eps\to 0 \hskip.1cm.$$ 
Passing to the limit in \eqref{eqonRNepsnonradial} divided by $\alpha_{\eps}$, $R_{\star}^{N} $ satisfies
\begin{equation} \label{eqonRNstarnonradial} 
\Delta R_{\star}^N - 2 e^{2u} R_{\star}^N = \delta_{\star} e^{2 u} Y^1 + 4  e^{2u} x_1 e_{\star} \hskip.1cm.
 \end{equation}
We set $R_{\star} := R_{\star}^N \circ \pi_N$ in $\mathbb{S}^2 \setminus \{N\}$. We obtain
\begin{equation} \label{eqonRNstarnonradialsphere} 
\Delta_h R_{\star} - 2  R_{\star} = \left(\delta_{\star} +  e_{\star} \frac{4}{1-y_0}\right) y_1
 \end{equation}
in $\mathbb{S}^2\setminus\{N\}$. Notice that $\frac{y_1}{1-y_0}$ is a bounded function. By \eqref{eqW12estonRepsnonradial2}, we have that $R_{\star} \in W^{1,2}(\mathbb{S}^2)$ and we can extend $R_{\star}$ in $\mathbb{S}^2$ so that the equation holds in $\mathbb{S}^2$. Integrating this equation against $y_1$, since $\Delta_h y_1 - 2 y_1 = 0$, we obtain that 
$$ \delta_{\star} = -   \frac{4\int_{\mathbb{S}^2} \frac{\left(y_1\right)^2}{1-y_0}dA_h}{\int_{\mathbb{S}^2} \left(y_1\right)^2dA_h} e_{\star} = -  \frac{8\pi}{\frac{4\pi}{3}} e_{\star} = -6 e_{\star} \hskip.1cm.$$
Notice now that if $e_{\star} \neq 0$, then
\begin{equation}\label{eqfinalargestarnonzero} \frac{\delta_{\eps}}{\eps^2} = \frac{\frac{\delta_{\eps}}{\alpha_{\eps}}}{\frac{\eps^2}{\alpha_{\eps}}} \to \frac{\delta_{\star}}{e_{\star}} = - 6 \hskip.1cm,\end{equation}
And \eqref{eqprovelambda2eps} would be proved in the non radial case.

\medskip

From now to the end of subsection \ref{subsectionnonradialcase}, we assume by contradiction that $e_{\star} = 0$. This implies that $\delta_{\star} = 0$. We obtain by \eqref{defalphaepsnonradial} and by definition of $\delta_{\star}$ and $e_{\star}$
\begin{equation} \label{eqestimateondeltaepsnonradial} \delta_{\eps} = o\left( \left\| R_{\eps} \right\|_{\infty}  \right) \text{ and } \eps^2 = o\left( \left\| R_{\eps} \right\|_{\infty}  \right)
\end{equation}
as $\eps\to 0$. Moreover, \eqref{eqonRNstarnonradialsphere} becomes $\Delta_h R_{\star} -2R_{\star} = 0$. This means that $R_{\star}$ is a first eigenfunction in $\mathbb{S}^2$. By assumptions $R_{\eps}(S)=R_{\eps}(N)=0$, $\nabla R_{\eps}(S)=0$ and $\nabla R_{\eps}(N)=0$, we obtain that $R_{\star}(S) = 0$ and $\nabla R_{\star}(S)=0$. Therefore, we must have $R_{\star}=0$. We obtain that for any $\rho>0$
\begin{equation} \label{eqestimateonRepsRepsnonradial} R_{\eps}(\rho \eps ) = o\left( \left\| R_{\eps} \right\|_{\infty} \right) 
\end{equation}
as $\eps\to 0$. 

Let $x_{\eps}\in \mathbb{D}$ be such that $\left\| R_{\eps} \right\|_{\infty} = \left\vert R_{\eps}(x_{\eps}) \right\vert$. We set $r_{\eps} =\left\vert x_{\eps} \right\vert$. We obtain from \eqref{eqestimateonRepsRepsnonradial} that $\eps = o\left(r_{\eps}\right)$. We set $\psi_{\eps}(z) = R_{\eps}(r_{\eps}z)$ and $\phi_{\eps}(z) = \tilde{\varphi}_{\eps,2}(r_{\eps}z)$ and we get from \eqref{eqonRepsnonradialsphere}
 \begin{equation} \label{eqonRphieps2} 
 \begin{split}\Delta \psi_{\eps} = & \left( 2 \psi_{\eps} + \left(\lambda_2^{\eps}-2\right) \phi_{\eps} + 2\eta_{\eps}Y_1(r_{\eps}\eps z) \right)  \frac{4 \frac{\eps^2}{r_{\eps}^2}}{\left(\frac{\eps^2}{r_{\eps}^2} +\left\vert z\right\vert^2\right)^2} \\
 &+ \left(\lambda_2^{\eps}\phi_{\eps}-2\eta_{\eps}Y_1(r_{\eps}\eps z)\right) \frac{4\eps^2r_{\eps}^2}{\left(1+\eps^2r_{\eps}^2\left\vert z\right\vert^2\right)^2} \hskip.1cm,
 \end{split}
\end{equation}
 so that dividing by $\alpha_{\eps}$, we obtain that for any $\rho>1$, 
 $$\left\|  \Delta \psi_{\eps}\right\|_{L^\infty\left(\mathbb{D}_{\rho}\setminus\mathbb{D}_{\frac{1}{\rho}}\right)} = O\left( \left(\left\| \psi_{\eps}\right\|_{\infty}  + \left\vert \delta_{\eps} \right\vert\right) \frac{\eps^2}{r_{\eps}^2}+ \frac{\eps^3}{r_{\eps}}\right) = O\left( \left\| \psi_{\eps}\right\|_{\infty}\frac{\eps}{r_{\eps}}\right) \hskip.1cm.$$
By standard elliptic theory, we obtain up to the extraction of a subsequence that 
$$\frac{\psi_{\eps}}{\alpha_{\eps}} \to \psi_{\star} \text{ in } \mathcal{C}^2\left(\mathbb{D}_\rho\setminus \mathbb{D}_{\frac{1}{\rho}}\right)$$
as $\eps\to 0$ for any $\rho>0$.  We also define up to the extraction of a subsequence
$$ \frac{x_{\eps}}{r_{\eps}} \to x_{\star}\in \mathbb{S}^1 \hbox{ as } \eps\to 0 \hskip.1cm. $$
Passing to the limit in \eqref{eqonRphieps2} divided by $\alpha_{\eps}$, $\psi_{\star}$ is a harmonic function on $\mathbb{R}^2\setminus\{0\}$. Since $\psi_{\star}$ is bounded by $1$, $\psi_{\star}$ is a constant function. But since the mean value of $\psi_{\star}$ is equal to $0$ on any circle centered at $0$, we obtain that $\psi_{\star}(x_{\star}) = 0$. Therefore $\left\| R_{\eps} \right\|_{L^\infty\left(\mathbb{D}\right)} = o\left(\alpha_{\eps}\right)$ and it is a contradiction with \eqref{defalphaepsnonradial} and \eqref{eqestimateondeltaepsnonradial}.

\medskip

Therefore, $e_{\star}\neq 0$ and we proved \eqref{eqprovelambda2eps} thanks to \eqref{eqfinalargestarnonzero} in the non-radial case.

\subsubsection{The radial case} \label{subsectionradialcase}
We aim at proving that $M\left(\widetilde{\varphi}_{\eps,2}\right) = 0$ so that the second eigenfunctions behave like in section \eqref{subsectionnonradialcase}.

We assume by contradiction that $M\left(\widetilde{\varphi}_{\eps,2}\right)$ is a non zero function. We aim at proving that $\lambda_{\eps}^2-2 = o(\eps^2)$ as $\eps \to 0$ which would give a contradiction. As already discussed, up to take $M\left(\widetilde{\varphi}_{\eps,2}\right)$ instead of $\widetilde{\varphi}_{\eps,2}$, we assume that $\widetilde{\varphi}_{\eps,2}$ is a radial function. We also assume that $\varphi_{\eps,2}(1,0,0) \geq 0$. Since $\varphi_{\eps,2}$ has to vanish somewhere and since it is symmetric, see \eqref{symmetrysecondeigenfunction}, we must have the existence of $0\leq r_{\eps}\leq 1$ such that $\widetilde{\varphi}_{\eps,2}(r_{\eps})=0$. Moreover, by symmetry, the Courant nodal theorem, and assumption $\widetilde{\varphi}_{\eps,2}(0)\geq 0$, we have that $ \widetilde{\varphi}_{\eps,2}$ is positive on $ [0,r_{\eps}[$ and negative on $ ]r_{\eps},1]$. By symmetry again, we must have $\left(\widetilde{\varphi}_{\eps,2}\right)'(1) = 0$.

Therefore, we integrate the eigenvalue equation on the radial function $\widetilde{\varphi}_{\eps,2}$
\begin{equation}\label{eqradialonphieps2tilde} -\frac{1}{r}\partial_r\left( r \left(\widetilde{\varphi}_{\eps,2}\right)' \right)  = \lambda_{\eps}^2 e^{2\widetilde{\omega}_{\eps}} \widetilde{\varphi}_{\eps,2} \end{equation}
and we obtain for $0< s< 1$
\begin{equation} \label{eqintradialonphieps2tilde} 
\left(\widetilde{\varphi}_{\eps,2}\right)'(s) = - \frac{1}{s}\int_0^s \lambda_{\eps}^2 e^{2\widetilde{\omega}_{\eps}} \widetilde{\varphi}_{\eps,2}(t) t dt =  \frac{1}{s}\int_s^1 \lambda_{\eps}^2 e^{2\widetilde{\omega}_{\eps}} \widetilde{\varphi}_{\eps,2}(t) t dt  < 0 \hskip.1cm, \end{equation}
so that $\widetilde{\varphi}_{\eps,2}$ realizes its maximum for $r=0$ and its minimum for $r=1$.

\medskip

Now, let's prove that $\widetilde{\varphi}_{\eps,2}$ is uniformly bounded. Since $\widetilde{\varphi}_{\eps,2}(0) = \varphi^N_{\star,2}(0) + o(1)$, it suffices to prove that $\widetilde{\varphi}_{\eps,2}(1)$ is uniformly bounded. Integrating again \eqref{eqintradialonphieps2tilde}, we have that
\begin{equation*}
\begin{split}
\widetilde{\varphi}_{\eps,2}(r)  - \widetilde{\varphi}_{\eps,2}(1) = & \int_{r}^1 \frac{1}{s}\left(\int_s^1 \lambda_{\eps}^2 e^{2\widetilde{\omega}_{\eps}} \left(-\widetilde{\varphi}_{\eps,2}(t)\right) t dt\right)ds \\
\leq &  - \widetilde{\varphi}_{\eps,2}(1)  \lambda_{\eps}^2  \int_{r}^1 \frac{1}{s}\left(\int_s^1  e^{2\widetilde{\omega}_{\eps}} t dt\right)ds \\
\leq & -\widetilde{\varphi}_{\eps,2}(1) \lambda_{\eps}^2 \left(  \ln\left(1 +\frac{\eps^2}{r^2}\right) + \eps^2 \right)
\end{split}
 \end{equation*}
so that choosing $r = \rho\eps$ for any fixed positive constant $\rho>0$, we have by \eqref{equnifconvpgieps2} that $\widetilde{\varphi}_{\eps,2}(\rho\eps) = \varphi^N_{\star,2}(R)+o(1) $ as $\eps\to 0$ and
\begin{equation*}
- \widetilde{\varphi}_{\eps,2}(1)  \left(1 - \lambda_{\eps}^2 \left(  \ln\left(1 +\frac{1}{\rho^2}\right) + \eps^2 \right) \right) \leq  -\varphi^N_{\star,2}(\rho)+o(1) 
 \end{equation*}
 as $\eps \to 0$. Since we have \eqref{eqrayleighf2}, choosing $\rho$ such that $2  \ln\left(1 +\frac{1}{\rho^2}\right) = \frac{1}{2}$, we obtain that $ \widetilde{\varphi}_{\eps,2}(1)$ is uniformly bounded.

\medskip

Then, since $\tilde{\varphi}_{\eps,2}$ is uniformly bounded, we have that for any $\rho>0$, 
$$ \int_{\rho\eps}^{1} \left(\tilde{\varphi}_{\eps,2}\right)^2 e^{2\tilde{\omega}_{\eps}} dx \leq\frac{C}{\rho^2} $$
for a constant $C$ independent from $\rho$ and $\eps$. Letting $\eps\to 0$ and then $\rho\to +\infty$ on the $L^2$ formula for $\varphi_{\eps,2}$, we get that
$$ \int_{\mathbb{R}^2} \left(\varphi_{\star,2}^N\right)^2 e^{2u}dx =  \frac{4\pi}{3} \hbox{ and } \int_{\mathbb{R}^2}\varphi_{\star,2}^N e^{2u}dx  = 0 \hskip.1cm.$$
Then, by orthogonality with the constant functions, we must have that $\lambda_{\star,2} = 2$, and the inverse stereographic projection of $\left(\varphi_{\star,2}\right)^N $ has to be a first eigenfunction on the sphere. We have the following linear combination:
$$ \varphi_{\star,2}^N =  \sum_{i=0}^3 a_{\star,i} Y_i \hskip.1cm, $$
where $Y_i(x) = \pi_N^{-1}\left(x\right).e_i$ with $\sum_{i=0}^3 \left(a_{\star,i}\right)^2 = 1$. Since $\varphi_{\star,2}^N$ is radial and $\varphi_{\star,2}^N(0)>0$, we obtain that $a_{\star,0} = 1$ and that $a_{\star,1} = a_{\star,2} = 0$.

\medskip

Now, we aim at estimating $\delta_{\eps} = \left\vert \lambda_{\eps}^2 -2 \right\vert$.

\medskip

We set $R_{\eps}(x) = \tilde{\varphi}_{\eps,2}(x) - \tilde{\varphi}_{\eps,2}(0) Y^0\left(\frac{x}{\eps}\right)$ so that $R_{\eps}$ is a radial function satisfying $R_{\eps}(0)=0$ satisfying the following equation
\begin{equation}
\label{eqonRepsradial}
\Delta R_{\eps} - 2 e^{2u_{\eps}}R_{\eps} = \left(\lambda_{\eps}^2-2\right)e^{2 u_{\eps}}\tilde{\varphi}_{\eps,2} + \lambda_{\eps}^2 e^{2v_{\eps}} \tilde{\varphi}_{\eps,2} 
\end{equation}
in $\mathbb{D}$. Integrating this equation against $R_{\eps}$, we obtain 
\begin{equation}\label{eqW12estonRepsradial}
\begin{split}
\int_{\mathbb{D}} \left\vert \nabla R_{\eps}\right\vert^2 =& \int_{\mathbb{S}^1} R_{\eps}\partial_{\nu} R_{\eps} + \left(\lambda_{\eps}^2-2\right) \int_{\mathbb{D}}e^{2u_{\eps}} \tilde{\varphi}_{\eps,2}R_{\eps} + \lambda_{\eps}^2 \int_{\mathbb{D}} e^{2v_{\eps}} \tilde{\varphi}_{\eps,2} R_{\eps} \\
\leq & \left\| \varphi_{\eps,2} \right\|_{\infty} \left\| R_{\eps} \right\|_{\infty} \left( \int_{\mathbb{S}^1} \left\vert \left(\partial_{\nu} Y^0\left(\frac{x}{\eps}\right)\right) \right\vert + \left\| R_{\eps} \right\|_{\infty} +2\pi \left\vert \lambda_{\eps}^2 -2 \right\vert + O\left(\eps^2\right) \right) \\
\leq & C \left\| R_{\eps} \right\|_{\infty} \left(\left\| R_{\eps} \right\|_{\infty} + \delta_{\eps} + \eps^2  \right)
\end{split}
\end{equation}
for a positive constant $C$ independent from $\eps$. Letting 
\begin{equation}\label{defalphaepsradial} \alpha_{\eps} = \left\| R_{\eps} \right\|_{\infty} + \delta_{\eps} + \eps^2 \hskip.1cm,\end{equation}
we get that 
\begin{equation} \label{eqW12estonRepsradial2}
\int_{\mathbb{D}} \left\vert \nabla R_{\eps}\right\vert^2 \leq O\left( \alpha_{\eps}^2 \right)
\end{equation}
as $\eps\to 0$. Letting $R_{\eps}^N(x) = R_{\eps}(\eps x)$, we get from \eqref{eqonRepsradial} that
\begin{equation}
\label{eqonRNepsradial}
\Delta R_{\eps}^N - 2 e^{2u} R_{\eps}^N = \left(\lambda_{\eps}^2-2\right)e^{2 u}\tilde{\varphi}_{\eps,2}^N + \lambda_{\eps}^2 \frac{4\eps^4}{\left(1+\eps^4 \left\vert x \right\vert^2\right)^2} \tilde{\varphi}^N_{\eps,2} \hskip.1cm,
\end{equation}
So that dividing this equation by $\alpha_{\eps}$, by standard elliptic theory, we obtain up to the extraction of a subsequence that 
\begin{equation} \label{defRNstarradial} \frac{R_{\eps}^N}{\alpha_{\eps}} \to R_{\star}^{N} \hbox{ in } \mathcal{C}^{2}\left(\mathbb{D}_R\right) \end{equation}
as $\eps\to 0$ for any $\rho>0$. We also have that $\frac{\delta_{\eps}}{\alpha_{\eps}} \to \delta_{\star}$ as $\eps \to 0$ and $\frac{\eps^4}{\alpha_{\eps}}\to 0$ as $\eps\to 0$. From \eqref{eqonRNepsradial}, $R_{\star}^{N} $ satisfies
\begin{equation} \label{eqonRNstarradial} 
\Delta R_{\star}^N - 2 e^{2u} R_{\star}^N = \delta_{\star} e^{2 u} Y^0 \hskip.1cm.
 \end{equation}
We set $R_{\star} := R_{\star}^N \circ \pi_N$ in $\mathbb{S}^2 \setminus \{N\}$. We obtain
\begin{equation} \label{eqonRNstarradialsphere} 
\Delta_h R_{\star} - 2  R_{\star} = \delta_{\star} y_0 
 \end{equation}
in $\mathbb{S}^2\setminus\{N\}$. By \eqref{eqW12estonRepsradial2}, we have that $R_{\star} \in W^{1,2}(\mathbb{S}^2)$ and we can extend $R_{\star}$ in $\mathbb{S}^2$ so that the equation holds in $\mathbb{S}^2$. Integrating \eqref{eqonRNstarradialsphere} against $y_0$ gives $\delta_{\star} = 0$. By definition of $\delta_{\star}$, we obtain that $\delta_{\eps} = o\left( \alpha_{\eps} \right) $ as $\eps\to 0$, and we obtain by \eqref{defalphaepsradial}
\begin{equation} \label{eqestimateondeltaeps} \delta_{\eps} = o\left( \left\| R_{\eps} \right\|_{\infty} +  \eps^2 \right) 
\end{equation}
as $\eps\to 0$.

Moreover, we obtain that $\Delta_h R_{\star} = 2  R_{\star}$. This means that $R_{\star}$ is a first eigenfunction in $\mathbb{S}^2$. Since $R_{\star}$ only depends on $y_0$ and $R_{\star}(S)=0$, we obtain that $R_{\star}=0$. We obtain that for any $\rho>0$
\begin{equation} \label{eqestimateonRepsReps} R_{\eps}(\rho \eps ) = o\left( \left\| R_{\eps} \right\|_{\infty} +  \eps^2 \right) 
\end{equation}
as $\eps\to 0$.

\medskip

Let's prove now that 
\begin{equation} \label{eqestonnormReps}\left\| R_{\eps} \right\|_{\infty} = O\left(\eps^2\right) \end{equation}
as $\eps\to 0$. Let $0 \leq r_{\eps} \leq 1$ be such that $\left\| R_{\eps} \right\|_{\infty} = \left\vert R_{\eps}(r_{\eps}) \right\vert$. By \eqref{eqestimateondeltaeps}, if $r_{\eps} = O(\eps)$, we easily deduce \eqref{eqestonnormReps}. We then assume that $\eps = o(r_{\eps})$. By integration on the equation \eqref{eqonRepsradial} satisfied by the radial function $R_{\eps}$, we have that for $r\leq 1$,
\begin{equation}
\label{eqradialestonReps}
\begin{split}
R_{\eps}(r) - R_{\eps}(1) =  & -\int_{1}^r \frac{1}{s}\left(\int_{1}^s \Delta R_{\eps} t dt\right) ds \\
= &  -2\int_{1}^r \frac{1}{s}\left(\int_{1}^s R_{\eps} e^{2u_{\eps}} t dt\right) ds - \left(\lambda_\eps^2 - 2\right)\int_{1}^r\frac{1}{s}\left(\int_{1}^s e^{2\tilde{\omega}_{\eps}}\tilde{\varphi}_{\eps,2} t dt\right)ds \\
& - 2 \int_{1}^r \frac{1}{s} \left(\int_{1}^s e^{2v_{\eps}}\tilde{\varphi}_{\eps,2} t dt \right)ds  + (1-r)\left(R_{\eps}\right)'(1) 
\end{split}
\end{equation}
so that since $R_{\eps}'(1) = O\left(\eps^2\right)$, that
$\int_{1}^r \frac{1}{s}\left(\int_{1}^s e^{2u_{\eps}} t dt\right) ds = \ln\left(1+\frac{\eps^2}{r^2}\right)$
and that  
$$\varphi_{\eps}(r) - \varphi_{\eps}(1) = -  \int_{1}^r\frac{1}{s}\left(\int_{1}^s e^{2\tilde{\omega}_{\eps}}\tilde{\varphi}_{\eps,2} t dt\right)ds \hskip.1cm,$$
we have since $\tilde{\varphi}_{\eps,2}$ is uniformly bounded and by definition of $\delta_{\eps}$ that
\begin{equation} \label{eqradialestonReps2}
\left\vert R_{\eps}(r) - R_{\eps}(1)  \right\vert \leq 2\left\| R_{\eps} \right\|_{\infty}\ln\left(1+\frac{\eps^2}{r^2}\right) + O\left( \delta_{\eps} + \eps^2 \right) \hskip.1cm.
\end{equation}
In particular for $r=r_{\eps}$, since we have \eqref{eqestimateondeltaeps}, that $R_{\eps}(r_{\eps}) = \left\| R_{\eps} \right\|_{\infty}$ and that $\eps = o(r_{\eps})$, we obtain that
\begin{equation} \label{eqradialestonReps3}
\left\| R_{\eps} \right\|_{\infty} \leq \left\vert R_{\eps}(1)  \right\vert + O\left( \eps^2 \right) \hskip.1cm.
\end{equation}
Applying again \eqref{eqradialestonReps2} for $r= \rho\eps$ for a fixed $\rho>0$ such that $2\ln\left(1+\frac{1}{\rho^2}\right) = \frac{1}{2}$, we obtain that
\begin{equation} \label{eqradialestonReps4}
\left\vert R_{\eps}(1) \right\vert \leq \left\vert R_{\eps}(\rho\eps)  \right\vert + O\left( \eps^2 \right)  \hskip.1cm.
\end{equation}
as $\eps\to 0$. By \eqref{eqestimateonRepsReps}, we easily obtain the expected result \eqref{eqestonnormReps}.

\medskip

Thanks to \eqref{eqestonnormReps} and \eqref{eqestimateondeltaeps}, we then obtain that $\delta_{\eps} = o(\eps^2)$ in this case. This concludes subsection \eqref{subsectionradialcase}.

\bibliographystyle{alpha}
\bibliography{mybibfile}

\begin{thebibliography}{GWZ}
\bibitem[Alm66]{almgren}
F. J. Almgren, Some interior regularity theorems for minimal surfaces and an extension of Bernstein’s theorem, Ann. of Math. (2) {\bf 84}, 1966, 277–292.
\bibitem[Ber73]{berger}
M.~Berger, {\em Sur les premi\`eres valeurs propres des vari\'et\'es riemanniennes}, Compositio Math.\ {\bf 26}, 1973, 129--149.
\bibitem[Bre13]{brendle}
S. Brendle, {\em Embedded minimal tori in S3 and the Lawson conjecture}, Acta Math. {\bf 211}, 2013, 2, 177-190
\bibitem[CES03]{ces}
B.~Colbois, A.~El Soufi,
{\em Extremal eigenvalues of the {L}aplacian in a conformal class of metrics: the `conformal spectrum'},
Ann.\ Global Anal.\ Geom.\ {\bf 24} (2003), no.4, 337--349.
\bibitem[BP22]{BP22}
R.G. Bettiol, P. Piccione, {\em Nonplanar minimal spheres in ellipsoids of revolution},	arXiv:2111.14995
\bibitem[ESI00]{Ilias_ElSoufi}
A.~El Soufi, S.~Ilias, {\em Riemannian manifolds admitting isometric immersions by their first eigenfunctions}, Pacific J.\ Math. {\bf 195}, 2000, 91--99.
\bibitem[Eji98]{Ejiri}
N.~Ejiri, {\em The boundary of the space of full harmonic maps of S2 into S2m(1) and
extra eigenfunctions}, Japan. J. Math, {\bf24}, 1, 1998, 83–121.
\bibitem[FS13]{fs2}
A.~Fraser, R.~Schoen, {\em Minimal surfaces and eigenvalue problems}, Contemporary Mathematics, {\bf599}, 2013, 105--121.
\bibitem[FS16]{fs}
A.~Fraser, R.~Schoen, {\em Sharp eigenvalue bounds and minimal surfaces in the ball}, Invent.\ Math. {\bf 203}, 2016, 823--890.
\bibitem[HK19]{hk}
R.~Haslhofer, D.~Ketover, {\em Minimal 2-spheres in 3-spheres}, Duke Math. J.
{\bf 168}, 2019, 10, 1929-1975
\bibitem[HKS16]{hks}
M. Kilian, L. Hauswirth and M. Schmidt,  {\em The geometry of embedded constant mean curvature tori in the 3-sphere via integrable systems} 
Proceedings of the London Mathematical Society {\bf 112}, 2016, 3, 588-622
\bibitem[Her70]{hersch}
J.~Hersch, {\em Quatre propri\'{e}t\'{e}s isop\'{e}rim\'{e}triques de
membranes sph\'{e}riques homog\`{e}nes}, C.R.\ Acad.\ Sci.\ Paris\ S\'{e}r. A-B {\bf 270}, 1970, A1645--A1648.
\bibitem[HON99]{HON}
M.~Hoffmann-Ostenhof, T.~Hoffman-Ostenhof, N.~Nadirashvili {\em On the Multiplicity of Eigenvalues of the Laplacian on Surfaces}, Annals of Global Analysis and Geometry {\bf 17}, 43–48, 1999.
\bibitem[Kar19a]{karpukhin-2}
M.A.~Karpukhin, {\em On the Yang-Yau inequality for the first Laplace eigenvalue},  Geom. Funct. Anal., {\bf 29}, 2019, 1864--1885.
\bibitem[Kar19b]{karpukhin-3}
M.A.~Karpukhin, 
{\em Index of minimal spheres and isoperimetric eigenvalue inequalities}, arXiv: 1905.03174, 35 pp.,  To appear in Invent. Math.
\bibitem[KNPP19]{knpp}
M.A.~Karpukhin, N.~Nadirashvili, A.~Penskoi, I.~Polterovich, {\em An isoperimetric inequality for Laplace eigenvalues on the sphere}, arXiv:1706.05713v4, to appear in J.\ Differential\ Geom.
\bibitem[KS20]{KS}
M.A.~Karpukhin, D.L.~Stern {\em Min-max harmonic maps and a new characterization of conformal eigenvalues}
arXiv preprint (2020),	arXiv:2004.04086, 69 pp.
\bibitem[Kor93]{korevaar}
N.~Korevaar, {\em Upper bounds for eigenvalues of conformal metrics}, J. Differential Geom., {\bf37}, 1993, 1, 73--93.
\bibitem[Lit70]{little}
J A Little, {\em Nondegenerate homotopies of curves on the unit 2 –sphere}, J. Differential Geometry, {\bf 4}, 1970, 339–348
\bibitem[LY82]{liyau}
P.~Li, S.T.~Yau, {\em A new conformal invariant and its applications to the Willmore conjecture and the first eigenvalue of compact surfaces}, Invent..\ Math.\ {\bf 69}, 1982, 269--291.
%
\bibitem[MN17]{MarquesNeves}
F.C.~Marques, A.~Neves, {\em Existence of infinitely many minimal hypersurfaces in positive Ricci curvature}, Invent..\ Math.\ {\bf 209}, 269--291, 2017.
\bibitem[Mor31]{morse}
M. Morse, {\em Closed Extremals}, Annals of Mathematics, {\bf 32}, 3, 1931, 549-566
\bibitem[MS19]{MS2}
H.~Matthiesen, A.~Siffert,
{\em Sharp asymptotics for the first eigenvalue on some degenerating surfaces}, to appear in Trans. Amer. Math. Soc, 2019, 35pp.
%
\bibitem[MS19a]{MS3}
H.~Matthiesen, A.~Siffert,
{\em Handle attachement and the normalized first eigenvalue}, arXiv preprints 2019, arXiv:1909.03105v2, 65pp.
\bibitem[Nad96]{nadirashvili}
N.~Nadirashvili, {\em Berger's isoperimetric problem and minimal immersions of surfaces}, Geom.\ Func.\ Anal.\ {\bf 6}, 1996, 877-897.
%
\bibitem[Nad02]{nadirashvili-2}
N.~Nadirashvili, {\em Isoperimetric inequality for the second eigenvalue of a sphere}, J.\ Differential\ Geom.\ {\bf 61}, 2002, 335--340. 
\bibitem[NS19]{nayatani_shoda}
S.~Nayatani, T.~Shoda {\em Metrics on a closed surface of genus two which maximize the first eigenvalue of the {L}aplacian}, 
C.\ R.\ Math.\ Acad.\ Sci.\ Paris {\bf 357} (2019), no.1, 84--98. 
\bibitem[Pet14]{petrides-1}
R.~Petrides, {\em Maximization of the second conformal eigenvalue of spheres}, Proc. Amer. Math. Soc., {\bf142}, 2014, 7, 2385--2394.
\bibitem[Pet14a]{petrides}
R.~Petrides, {\em Existence and regularity of maximal metrics for the first Laplace eigenvalue on surfaces}, Geom.\ Funct.\ Anal.\ {\bf24}, 2014, 1336--1376.
\bibitem[Pet18]{petrides-2}
R.~Petrides, {\em  On the existence of metrics which maximize Laplace eigenvalues on surfaces}, Int.\ Math.\ Res.\ Not. , {\bf14}, 2018, 4261--4355.
\bibitem[Pet19]{Pet19}
R. Petrides, {\em Maximizing {S}teklov eigenvalues on surfaces}, J. Differential Geom. {\bf 113}, 2019, 1, 95--188
\bibitem[Pet21]{Pet20}
R. Petrides, {\em Extremal metrics for combinations of Laplace eigenvalues and minimal surfaces into ellipsoids}
\bibitem[Pet21]{Pet21}
R. Petrides, {\em shape optimization for combinations of Steklov eigenvalues on Riemannian surfaces}
\bibitem[Pet22]{Pet22}
R. Petrides, {\em A variational method for functionals depending on eigenvalues}, 	arXiv:2211.15632
\bibitem[Pet23]{Pet23}
R. Petrides, {\em Non planar free boundary minimal disks into ellipsoids}
\bibitem[PT23]{PT23}
R. Petrides, D. Tewodrose, {\em Subdifferentials and critical points of eigenvalue functionals}
\bibitem[Son23]{song}
A. Song, {\em Existence of infinitely many minimal hypersurfaces in closed
              manifolds}, Ann. of Math. (2), {\bf 197}, 2023, {3}, 859--895
\bibitem[Tak66]{takahashi}
T. Takahashi, {\em Minimal immersions of Riemannian manifolds}, J. Math. Soc. Japan, {\bf 18}, 1966, 4, 380–385
\bibitem[YY80]{yangyau}
P.C.~Yang, S.T.~Yau, {\em Eigenvalues of the {L}aplacian of compact {R}iemannian surfaces and minimal submanifolds}, Ann. Scuola Norm. Sup. Pisa Cl. Sci. (4) {\bf 7}, 1980, no.1, 53--63.
\bibitem[Yau82]{Yau}
S.T.~Yau, {\em Problem section, Seminar on Differential Geometry}, Annals of Mathematical Studies 102, Princeton, 1982, 669706.
\bibitem[Yau87]{Yau2}
S.T. Yau. Nonlinear analysis in geometry. L’enseignement math\'ematique, {\bf 34}, 109–158, 1987.
\end{thebibliography}

\nocite{*}

\end{document}